\crefname{appsec}{Appendix}{Appendices}
\theoremstyle{plain}
\newtheorem{thm}{Theorem}[section]
\crefname{thm}{Theorem}{Theorems}
\theoremstyle{plain}
\newtheorem{lem}[thm]{Lemma}
\crefname{lem}{Lemma}{Lemmas}
\theoremstyle{plain}
\theoremstyle{plain}
\newtheorem*{claim*}{Claim}
\crefname{claim}{Claim}{Claims}
\theoremstyle{definition}
\newtheorem{defn}[thm]{Definition}
\theoremstyle{plain}
\newtheorem{conjecture}[thm]{Conjecture}
\crefname{conjecture}{Conjecture}{Conjectures}
\theoremstyle{plain}
\crefname{prop}{Proposition}{Propositions}
\theoremstyle{definition}
\theoremstyle{definition}
\theoremstyle{plain}
\newtheorem{claim}[thm]{Claim}
\crefname{appsec}{Appendix}{Appendices}
\date{}
\let\originalleft\left
\let\originalright\right
\renewcommand{\left}{\mathopen{}\mathclose\bgroup\originalleft}
\renewcommand{\right}{\aftergroup\egroup\originalright}
\renewcommand*{\UrlTildeSpecial}{%
  \do\~{%
    \mbox{%
      \fontfamily{ptm}\selectfont
      \textasciitilde
    }%
  }%
}%
\let\Url@force@Tilde\UrlTildeSpecial
\tikzstyle{vertex}=[circle,draw=black,fill=black,inner sep=0,minimum size=0.2cm,text=white,font=\footnotesize]
\tikzset{every loop/.style={min distance=50,in=50,out=130,looseness=7}}
\renewcommand{\textendash}{--}
\begin{document}
\title{Almost all Steiner triple systems are almost resolvable}
\author{Asaf Ferber\thanks{Department of Mathematics, University of California, Irvine.
Email: \href{mailto:asaff@uci.edu} {\nolinkurl{asaff@uci.edu}}.
Research supported in part by NSF grants DMS-1954395 and DMS-1953799.} \and Matthew Kwan\thanks{Department of Mathematics, Stanford University, Stanford, CA 94305.
Email: \href{mattkwan@stanford.edu}{\nolinkurl{mattkwan@stanford.edu}}.
Research supported in part by SNSF Project 178493 and NSF Award DMS-1953990.}}

\maketitle
\global\long\def\E{\mathbb{E}}%
\global\long\def\Var{\operatorname{Var}}%
\global\long\def\floor#1{\left\lfloor #1\right\rfloor }%
\global\long\def\ceil#1{\left\lceil #1\right\rceil }%

\begin{abstract}
We show that for any $n$ divisible by 3, almost all order-$n$ Steiner
triple systems admit a decomposition of almost all their triples into
disjoint perfect matchings (that is, almost all Steiner triple systems
are almost \emph{resolvable}).
\end{abstract}

\section{Introduction}

One of the oldest problems in combinatorics, posed by Kirkman in 1850~\cite{Kirkman 1} is the following: 

\emph{Fifteen young ladies in a school walk out three abreast seven days in succession: it is required to arrange them daily so that no two shall walk twice abreast.} 

This problem was solved by Kirkman a few years earlier~\cite{Kirkman} (although the first published solution is due to Cayley~\cite{Cayley}). 

In order to frame the above question in more generality we need to introduce some terminology. An \emph{order $n$ Steiner Triple System} ({\bf STS}$(n)$ for short) is a collection of triples of $[n]:=\{1,\ldots, n\}$ for which every pair of elements is contained in exactly one triple. These objects are named after Jakob Steiner, who observed the existence of such systems in 1853 (it is an interesting historical note that this happened after Kirkman proposed his problem). 

As an example, observe that the collection $\{123,145,167,246,257,347,356\}$
forms an {\bf STS}$(7)$, and a simple counting argument yields that an {\bf STS}$(n)$ can only exist if $n\equiv (1 \text{ or } 3) \mod 6$. It is also known that for every such $n$, a construction of an {\bf STS}$(n)$ can be achieved. The study of Steiner triple systems (and some natural generalisations, such as Steiner systems and block designs) has a long and rich history. These structures have strong connections to a wide range of different subjects, ranging from group theory, to finite geometry, to experimental design, to the theory of error-correcting codes, and more. For an introduction to the subject the reader is referred to \cite{CR99}.

Now, for an order-$n$ Steiner triple system $S$, a \emph{matching} in $S$ is a collection of disjoint triples of $S$,
and a \emph{perfect matching} (also known as a \emph{resolution class} or \emph{parallel class}) is a matching covering the entire ground set $[n]$. Observe that a perfect matching consists of exactly $\frac{n}{3}$ triples and therefore can only exist when $n=3 \mod 6$. We say that $S$ is \emph{resolvable} (or has \emph{parallelism}) if the triples in $S$ can be perfectly partitioned into perfect matchings. Since $S$ has $\binom{n}{2}/3$ triples and every perfect matching has $n/3$ edges, such a partition must consist of exactly $\frac{n-1}{2}$ perfect matchings.

Using the above terminology, Kirkman's problem is simply asking whether there exists a resolvable {\bf STS}$(15)$. More generally, one can ask whether there exists a resolvable {\bf STS}$(n)$ for any $n=3 \mod 6$. This problem was famously solved
in the affirmative by Ray-Chaudhuri and Wilson~\cite{RW71} in 1971,
over 100 years after Kirkman posed his problem.

Despite the difficulty in proving even that resolvable Steiner triple systems
exist, the existence of many large matchings seems to actually be
a ``typical'' property of Steiner triple systems. Early results
were based on R\"odl-nibble\footnote{The ``nibble'' is a type of probabilistic argument introduced by R\"odl~\cite{Rod85} for finding almost-perfect matchings in set systems. Far beyond the study of Steiner triple systems, it has had significant influence throughout combinatorics in the last 30 years.} type arguments. A result of Pippenger and Spencer~\cite{PS89} shows that every {\bf STS}$(n)$ contains disjoint matchings of size $n-o\left(n\right)$ which
cover a $\left(1-o\left(1\right)\right)$-fraction of its triples,
and Alon, Kim and Spencer~\cite{AKS97} later proved that every {\bf STS}$(n)$ contains at least one matching which covers
all but at most $O(n^{1/2}\log^{3/2}n)$ elements. More recently,
building on the breakthrough work of Keevash~\cite{Kee14} concerning
existence and completion of block designs, Kwan~\cite{Kwa16} proved that
if $n=3\mod 6$, then almost all (meaning a $\left(1-o\left(1\right)\right)$-fraction
of) order-$n$ Steiner triple systems have a perfect matching. In
fact, almost every {\bf STS}$(n)$ has $\left(\left(1-o\left(1\right)\right)n/\left(2e^{2}\right)\right)^{n/3}$
different perfect matchings. This proof was adapted by Morris~\cite{Mor17}
to show that almost every {\bf STS}$(n)$ has $\Omega\left(n\right)$
disjoint perfect matchings. We would go so far as to make the following conjecture:

\begin{conjecture}
\label{conj:kirkman}For $n=3\mod 6$, almost every {\bf STS}$(n)$ is resolvable.
\end{conjecture}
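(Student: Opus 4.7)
The plan is to attack full resolvability by combining three ingredients: the almost-resolvability theorem developed in this paper, an absorbing structure tailored to perfect matching decompositions, and a Keevash-style completion step. The starting point is the standard observation (used by Kwan and Morris) that almost every $\mathbf{STS}(n)$ can be coupled with the triangle-removal random process, and hence inherits strong quasirandomness properties such as balanced codegrees, bounded spread, and the typical existence of many ``switchings'' between alternative triples.

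The first step is to reserve, inside a typical $S$, a small absorbing subsystem $A \subseteq S$ of size $o(n^2)$ with the following property: for any admissible ``defect'' $D$ on $o(n)$ vertices produced by a nibble-type argument, the union $A \cup D$ decomposes into $|A|/(n/3) + O(1)$ perfect matchings of $[n]$. Constructing such an absorber is the crux of the problem, and I would try to do it via iterative absorption in the spirit of Glock--K\"uhn--Lo--Osthus for hypergraph decompositions, but adapted to the random host: build a hierarchy of ``absorbing gadgets'' at geometrically decreasing scales, each capable of swallowing the leftover of the preceding layer. To show that a typical STS contains the required gadgets, I would use a switching/entropy argument in the style of \cite{Kee14,Kwa16}, exploiting the quasirandomness of the triangle-removal process to produce each gadget with the correct expected count and sharp concentration.

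The second step is to apply the almost-resolvability theorem of this paper to $S \setminus A$. This yields $(1-o(1))(n-1)/2$ disjoint perfect matchings covering all triples of $S\setminus A$ except a set $D$ of size $o(n^2)$. The third step is to feed $D$ into the absorber $A$: by construction, $A \cup D$ decomposes into exactly the right number of additional perfect matchings, and together with the previous matchings these form a full resolution of $S$ into $(n-1)/2$ parallel classes.

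The main obstacle is clearly the absorption step. Standard absorbers are designed to complete a \emph{single} perfect matching, whereas here we need an absorber flexible enough to finish a full decomposition: it must absorb $o(n)$ residual matchings simultaneously, each with its own uncovered vertex set. This forces the use of iterative absorption, which in turn forces us to control the interaction between absorbers at different scales when the host is random rather than dense and quasirandom. A secondary difficulty is that in a random STS one cannot choose triples freely --- one must prove that the required nested families of gadgets are present with high probability, which will likely require a new random construction of Steiner triple systems refining the triangle-removal process so that the reserved absorbing structures can be planted explicitly. I expect the technical heart of the proof to lie precisely at this interface between iterative absorption and the random-greedy construction of STSs.
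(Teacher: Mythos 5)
This statement is \cref{conj:kirkman}, which the paper explicitly leaves \emph{open}: the authors only prove the weaker \cref{conj:packing} (almost all Steiner triple systems admit $(1/2-o(1))n$ disjoint perfect matchings), and they remark in \cref{sec:concluding} that the exact resolvability conjecture ``is still open'' and that they ``imagine that an exact result would be quite difficult to prove.'' So there is no proof in the paper against which to compare yours; what you have written must stand on its own, and it does not.

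The genuine gap is the one you yourself flag and then do not close: the existence of the absorbing subsystem $A$. You posit an $A\subseteq S$ of size $o(n^2)$ such that for \emph{any} admissible defect $D$ on $o(n)$ vertices, $A\cup D$ decomposes exactly into perfect matchings. This is not a lemma you prove; it is a restatement of the hard part of the conjecture. Note how much stronger this is than the absorbers actually used in this paper: \cref{def:sub-absorber} gives a gadget with a covering and a non-covering matching, which lets one finish a \emph{single} perfect matching by toggling whether three root vertices are covered. Your $A$ must simultaneously complete $\Theta(n)$ matchings (each with its own uncovered vertex set), must use every one of its own triples exactly once across the whole decomposition, and must cope with a leftover $D$ whose structure is essentially uncontrolled --- the output of \cref{thm:stronger-PS} and the nibble gives no guarantee that $D$ is even close to decomposable into matchings. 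There are also exact counting constraints ($\binom{n}{2}/3$ triples into exactly $(n-1)/2$ classes of exactly $n/3$ triples each) that the error terms $o(n^2)$ and $o(n)$ in your sketch do not engage with. Invoking ``iterative absorption in the spirit of Glock--K\"uhn--Lo--Osthus'' names a plausible technology but does not construct the gadgets, does not show a random STS contains them (the paper's own machinery, \cref{lem:triangle-removal-transfer} and \cref{lem:bite-transfer-new}, only transfers properties holding with probability $1-e^{-n^{2-b}}$ in the triangle removal process, which is a serious constraint on what can be planted), and does not handle the interaction between scales. In short, your proposal is a reasonable research programme for an open problem, not a proof.
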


We remark that in \cref{conj:kirkman}, ``almost every'' certainly
cannot be replaced with ``every'': Bryant and Horsley~\cite{BH15}
proved that for infinitely many $n=3\mod 6$ there exist Steiner triple systems with not even a single perfect matching.

Of course, \cref{conj:kirkman} and the aforementioned result of Kwan
are really about\emph{ random }Steiner triple systems: we are trying
to understand properties that hold a.a.s.\footnote{By ``asymptotically almost surely'', or ``a.a.s.'', we mean that
the probability of an event is $1-o\left(1\right)$. Here and for
the rest of the paper, asymptotics are as $n\to\infty$.} in a uniformly random {\bf STS}$(n)$. This turns
out to be surprisingly difficult: despite the enormous advances in
the theory of random combinatorial structures since the foundational
work of Erd\H os and R\'enyi~\cite{ER59}, almost none of the available
tools seem to be applicable to random Steiner triple systems. Random
Steiner triple systems lack independence or any kind of recursive
structure, which rules out many of the techniques used to study Erd\H os\textendash R\'enyi
random graphs and random permutations, and there is basically no freedom
to make local changes, which precludes the use of ``switching''
techniques often used in the study of random regular graphs (see for
example \cite{KSVW01}). It is not even clear how to study a random
{\bf STS}$(n)$ empirically: in an attempt to find an efficient
algorithm to generate a random {\bf STS}$(n)$, Cameron~\cite{Cam02}
designed a Markov chain on Steiner triple systems, but he was not
able to determine whether this chain was connected. As far as we know,
before the work of Kwan~\cite{Kwa16}, the only nontrivial fact known
to hold for a random {\bf STS}$(n)$ was that it a.a.s.\ has trivial
automorphism group, a fact proved by Babai~\cite{Bab80} using a
direct (and rather coarse) counting argument.

Building on the ideas in \cite{Kwa16}, introducing the sparse
regularity method and extending a random partitioning argument from \cite{FKL17}, in this paper we prove the following ``asymptotic'' version of \cref{conj:kirkman},
adding to the short list of known facts about random Steiner triple
systems.
\begin{thm}
\label{conj:packing} Let ${\boldsymbol S}$ be a uniformly
random {\bf STS}$(n)$, where $n=3 \mod 6$. Then a.a.s.\ ${\boldsymbol S}$ has
$\left(1/2-o\left(1\right)\right)n$ disjoint perfect matchings.
\end{thm}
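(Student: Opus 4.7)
The plan is to partition the triples of $\boldsymbol{S}$ into roughly $(1/2 - o(1))n$ sub-collections $C_1, \ldots, C_m$, each of which contains a perfect matching of $[n]$. Since each $C_i$ has only slightly more than $n/3$ triples while a perfect matching requires exactly $n/3$, this is only feasible if $C_i$ is structured so that it both admits an almost-perfect matching and permits its leftover to be completed inside $C_i$.

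To this end, each $C_i$ should contain a small \emph{absorbing} sub-collection $A_i$ with the property that any sufficiently small vertex set $T \subseteq [n]$ can be combined with $A_i$ into a perfect matching of $T \cup V(A_i)$, together with a remainder $N_i := C_i \setminus A_i$ that contains an almost-perfect matching $M_i^{\circ}$ of $[n] \setminus V(A_i)$. The desired perfect matching $M_i$ is then obtained by combining $M_i^{\circ}$ with the absorption of its uncovered set by $A_i$.

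To build such a partition, I would first use the framework of Kwan \cite{Kwa16} (resting on Keevash's analysis of the existence/completion of designs) to establish that a random STS a.a.s.\ enjoys strong quasirandom properties, such as near-regular codegrees and concentration of triple-counts across large sub-hypergraphs. Second, I would select the absorbers $A_i$ by a probabilistic argument inside $\boldsymbol{S}$, showing that $\boldsymbol{S}$ a.a.s.\ contains many disjoint ``absorber gadgets'' of the required universal-absorption type. Third, I would randomly partition the remaining triples among the $C_i$'s and apply a nibble-type argument inside each $N_i$ to extract $M_i^{\circ}$.

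The main obstacle is the nibble step: each $N_i$ has only $(1+o(1))n/3$ edges on $n$ vertices and is thus extremely sparse, well below the regime in which the classical R\"odl nibble is known to converge. Overcoming this requires the sparse regularity method, together with the random-partition arguments extended from \cite{FKL17}, to certify that the pseudorandomness of $\boldsymbol{S}$ transfers simultaneously and with high probability to all $\Theta(n)$ sub-hypergraphs $N_i$, in a form strong enough to guarantee an almost-perfect matching in each. A secondary but intertwined obstacle is designing the absorbers so that their universal absorption property is robust to the specific small uncovered sets produced by the nibble in the corresponding $N_i$, which will again lean on the inherited pseudorandomness of $\boldsymbol{S}$.
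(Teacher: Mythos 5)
Your high-level intuition is right that the argument runs through (i) establishing pseudorandomness of a random {\bf STS} via Kwan's triangle-removal-process framework, (ii) constructing absorbing gadgets, and (iii) a random-partitioning step in the spirit of \cite{FKL17}, and you have correctly identified extreme sparsity as the central obstacle. However, the way you have set up the decomposition creates a gap that the tools you invoke cannot close. You propose to partition the $\approx n^{2}/6$ triples of $\boldsymbol{S}$ directly into $\approx n/2$ sub-collections $C_{i}$, each of which must yield its own perfect matching. Each $N_{i}=C_{i}\setminus A_{i}$ then has only $(1+o(1))n/3$ triples on $n$ vertices, i.e.\ average degree $O(1)$. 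At that density neither the R\"odl nibble nor the sparse regularity method says anything useful: the sparse regularity lemma and the K\L R-type embedding result require density $p$ well above $n^{-1/m_{3}}\approx n^{-1}$, i.e.\ $\Theta(n^{2})$ edges, whereas your $N_{i}$ has $\Theta(n)$ edges. There is essentially no room to ``nibble'' inside a hypergraph whose vertices each lie in $O(1)$ edges, and no pseudorandomness hypothesis transferred from $\boldsymbol{S}$ can change this, since it is an intrinsic counting obstruction at the level of $N_{i}$ itself.

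The paper avoids this by using a coarser, two-level decomposition. First, $\boldsymbol{S}$ is partitioned into a \emph{constant} number $r$ of spanning ``good'' subgraphs, each with $\Theta(n^{2})$ edges and all degrees $\approx n/(2r)$ (\cref{conj:random-good}). Within each such subgraph one does \emph{not} carve out one sub-collection per matching; instead one applies Pippenger--Spencer (\cref{thm:stronger-PS}) to the whole subgraph at once to extract $\Theta(n)$ edge-disjoint almost-perfect matchings simultaneously, which sidesteps the per-$C_{i}$ sparsity problem entirely. The absorbers used to complete these matchings do not live on all of $[n]$ either: the key trick inherited from \cite{FKL17} is to reserve, inside each good subgraph, small vertex subsets $W_{i}$ of size $\delta n$ together with nearly all of $S[W_{i}]$, so that the absorber-hosting subgraphs $F_{i}$ have degrees $\Omega(n)$ even though they carry only an $O(\delta)$-fraction of the edges. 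This is what makes the absorber-resilience property usable repeatedly as matchings are peeled off. If you revise your plan so that the $n/2$ matchings are extracted from a constant number of dense slices (rather than $n/2$ sparse ones), and move your absorbers onto small vertex subsets, you will be following essentially the paper's route.
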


Our proof of \cref{conj:packing} is based on tools and intuition from random \emph{hypergraph} theory. We conclude this introduction with a brief discussion of random hypergraphs and some related work.

For $k\geq 2$, a $k$-uniform hypergraph $H$ (or a $k$-\emph{graph} for short) is a pair $H=(V,E)$, where $V$ is a finite set of \emph{vertices} and $E\subseteq \binom{V}{k}$ is a family of $k$-element subsets of $V$, referred to as \emph{edges}. The existence of perfect matchings is one of the most central questions in the theory of graphs and hypergraphs. In the case of graphs (that is, $2$-uniform hypergraphs), the problem of finding a perfect matching (if one exists) is relatively simple, but the analogous problem in the hypergraph setting is known to be NP-hard (see \cite{Karp}). Therefore, a main theme in extremal and probabilistic combinatorics is to investigate sufficient conditions for the existence of perfect matchings in hypergraphs. The results which are most relevant to our paper are those regarding the problem of finding perfect matchings in \emph{random} hypergraphs.

Let $\textrm{H}^k(n,p)$ be the (binomial) random $k$-uniform hypergraph distribution on the vertex set $V(H)=[n]$, where each $k$-set $X\in \binom{[n]}{k}$ is included as an edge with probability $p$ independently. A main problem in this area was to find a ``threshold'' function $q(n)$ for the property of containing a perfect matching. That is, to find a function $q$ for which, when $\boldsymbol G\sim \textrm{H}^k(n,p)$, we have

\[ \Pr\left[\boldsymbol G \text{ contains a perfect matching}\right]\rightarrow \left\{ \begin{array}{cc}
       1&  \text{if } p(n)/q(n) \rightarrow \infty   \\
       0&  \text{ if } p(n)/q(n) \rightarrow 0.\end{array} \right. \]

Following a long line of work, the most significant development in this area is the celebrated work of Johansson, Kahn and Vu~\cite{JKV08} where they showed that $q(n)=\log n/n$ defines a threshold function for this property. Two other relevant results are those of Frieze and Krivelevich \cite{FK} and its variant due to Ferber, Kronenberg and Long \cite{FKL17} which establish a way to find ``many'' edge-disjoint perfect matchings/Hamiltonian cycles in random (hyper)graphs based on some pseudorandom properties and a random partitioning argument. 

\subsection{Structure of the paper}

The structure of the paper is as follows. In \cref{sec:random-STS}
we review the methods introduced in \cite{Kwa16} for studying random
Steiner triple systems via the triangle removal process, and present
one new lemma for studying monotone decreasing properties. In \cref{sec:sufficient-properties}
we outline the general approach of the proof, stating some properties
that we will use to pack perfect matchings and that we will be able to
show hold a.a.s.\ in a random Steiner triple system.

Before turning to the proof of \cref{conj:packing}, in \cref{sec:sparse regularity}
we discuss the sparse regularity lemma for hypergraphs and some auxiliary
lemmas for applying it, and in \cref{sec:almost-perfect-matchings}
we discuss some lemmas for finding almost-perfect matchings in hypergraphs.

The proof of \cref{conj:packing} itself appears in \cref{sec:pack-in-good,sec:random-good}.
In \cref{sec:pack-in-good} we prove that certain properties suffice
for packing perfect matchings, and in \cref{sec:random-good} we show
that these properties a.a.s.\ hold in random Steiner triple systems.

Finally, in \cref{sec:concluding} we have some concluding remarks,
and in \cref{sec:KLR-proof} we explain how to generalise Conlon, Gowers,
Samotij and Schacht's proof \cite{CGSS14} of the so-called K\L R conjecture to ``linear'' hypergraphs. This will be a key tool in our proof.

\subsection{Notation}

We use standard asymptotic notation throughout, as follows. For functions
$f=f\left(n\right)$ and $g=g\left(n\right)$, we write $f=O\left(g\right)$
to mean that there is a constant $C$ such that $\left|f\right|\le C\left|g\right|$,
$f=\Omega\left(g\right)$ to mean that there is a constant $c>0$
such that $f(n)\ge c\left|g(n)\right|$ for sufficiently large $n$, $f=\Theta\left(g\right)$ to mean
that that $f=O\left(g\right)$ and $f=\Omega\left(g\right)$, and $f=o\left(g\right)$ to mean that $f/g\to0$ as $n\to\infty$.
Also, following \cite{Kee15}, the notation $f=1\pm\varepsilon$ means
$1-\varepsilon\le f\le1+\varepsilon$.

We also use standard graph theory notation: $V\left(G\right)$ and
$E\left(G\right)$ are the sets of vertices and (hyper)edges of a
(hyper)graph $G$, and $v\left(G\right)$ and $e\left(G\right)$ are
the cardinalities of these sets. The subgraph of $G$ induced by a
vertex subset $U$ is denoted $G\left[U\right]$, the degree of a
vertex $v$ is denoted $\deg_{G}\left(v\right)$, and the subgraph
obtained by deleting $v$ is denoted $G-v$. Given a subset of vertices $U\subseteq V(G)$ and any vertex $v\in V(G)$, we let $\deg_U(v)$ denote the degree of $v$ into the subset $U$ (that is, the number of edges consisting of $v$ and some vertices of $U$).

For a positive integer $n$, we write $\left[n\right]$ for the set $\left\{ 1,2,\dots,n\right\} $.
For a real number $x$, the floor and ceiling functions are denoted
$\floor x=\max\left\{ i\in{\mathbb Z}:i\le x\right\} $ and $\ceil x=\min\left\{ i\in{\mathbb Z}:i\ge x\right\} $.
We will however mostly omit floor and ceiling signs and assume large
numbers are integers, wherever divisibility considerations are not
important. We will use the convention that random objects (for example, random variables or random graphs) are printed in bold. Finally, all logarithms are in base $e$.

\section{Random Steiner triple systems via the triangle removal process\label{sec:random-STS}}

In this section we reproduce the general theorems from \cite{Kwa16}
for studying the behaviour of a randomly chosen {\bf STS}$(n)$ via the triangle removal
process, including a new lemma for studying monotone decreasing properties.
This machinery will be crucial to prove \cref{conj:packing}.

Note that any {\bf STS}$(n)$ is a $3$-graph and let $N={n \choose 2}/3=\left(1+o\left(1\right)\right)n^{2}/6$ be
the number of edges in any {\bf STS}$(n)$.
We assume throughout this section that $n$ is 1 or 3 mod 6 (as otherwise an {\bf STS}$(n)$ does not exist). Let us first make some useful definitions.

\begin{defn}[partial Steiner triple systems]
A \emph{partial Steiner triple system} (also known as a \emph{linear}
3-graph) is a $3$-graph on the vertex set $\left[n\right]$ in which every
pair of vertices is included in at most one edge. We will
also want to consider partial Steiner triple systems equipped with
an ordering on their edges. Let ${\mathcal O}$ be the set of ordered
Steiner triple systems, and let ${\mathcal O}_{m}$ be the set of ordered
partial Steiner triple systems with exactly $m$ edges. For $S\in{\mathcal O}_{m}$
and $i\le m$, let $S_{i}$ be the ordered partial Steiner triple
system consisting of just the first $i$ edges of $S$. For a
(possibly ordered) partial Steiner triple system $S$, let $G\left(S\right)$
be the graph (that is, a $2$-graph) with an edge for every pair of vertices which does not appear in any edge of $S$. So, if $S$ has $m$ edges,
then $G\left(S\right)$ has ${n \choose 2}-3m$ edges.
\end{defn}

\begin{defn}[quasirandomness]
For a graph $G$ with $n$ vertices and $m$ edges, let $d\left(G\right)=m/{n \choose 2}$
denote its density. We say $G$ is \emph{$\left(\varepsilon,h\right)$-quasirandom}
if for every set $A$ of at most $h$ vertices, we have $\left|\bigcap_{w\in A}N_{G}\left(w\right)\right|=\left(1\pm\varepsilon\right)d\left(G\right)^{\left|A\right|}n$.
Let ${\mathcal O}_{m}^{\varepsilon,h}\subseteq{\mathcal O}_{m}$ be the set of ordered
partial Steiner triple systems $S\in{\mathcal O}_{m}$ such that $G\left(S_{i}\right)$
is $\left(\varepsilon,h\right)$-quasirandom for each $i\le m$.
\end{defn}

\begin{defn}[the triangle removal process]
The triangle removal process is defined as follows. Start with the
complete graph $K_{n}$ and iteratively delete the edge-set of a triangle chosen uniformly
at random from all triangles in the remaining graph. If we continue
this process for $m$ steps, the deleted triangles (in order) can
be interpreted as an ordered partial Steiner triple system in ${\mathcal O}_{m}$.
It is also possible that the process aborts (because there are no
triangles left) before $m$ steps, in which case we say it returns
the value ``$*$''. We denote by ${\operatorname{R}}\left(n,m\right)$ the resulting
distribution on ${\mathcal O}_{m}\cup\left\{ *\right\} $.
\end{defn}

Now, we can state the general theorem from \cite{Kwa16} comparing
a typical {\bf STS}$(n)$ with a typical outcome of the triangle removal process. Basically,
if we can show that the first few edges of the triangle removal
process (as an ordered partial Steiner triple system) satisfy some
property with extremely high probability, then it follows that the
first few edges of a uniformly random ordered {\bf STS}$(n)$ satisfy the same property with high probability. Moreover,
it suffices to study the triangle removal process conditioned on some
``good'' event, provided that this event contains the event that
(the graph of uncovered edges of) our partial Steiner triple system
is sufficiently quasirandom.
\begin{thm}
\label{lem:triangle-removal-transfer}Fixing $h\in{\mathbb N}$
and sufficiently small $a>0$, there is $b=b\left(a,h\right)>0$ such
that the following holds. Fix $\alpha\in\left(0,1\right)$, let $\mathcal{P}\subseteq{\mathcal O}_{\alpha N}$
be a property of ordered partial Steiner triple systems, let $\varepsilon=n^{-a}$,
 let $\mathcal{Q}\supseteq{\mathcal O}_{\alpha N}^{\varepsilon,h}$, let ${\boldsymbol S}\in{\mathcal O}$
be a uniformly random ordered Steiner triple system and let ${\boldsymbol S}'\sim{\operatorname{R}}\left(n,\alpha N\right)$.
If
\[
\Pr\left({\boldsymbol S}'\notin\mathcal{P}\,\middle|\,{\boldsymbol S}'\in\mathcal{Q}\right)\le\exp\left(-n^{2-b}\right)
\]
then
\[
\Pr\left({\boldsymbol S}_{\alpha N}\notin\mathcal{P}\right)\le\exp\left(-\Omega\left(n^{1-2a}\right)\right).
\]
\end{thm}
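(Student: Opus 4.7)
The plan is to prove the theorem in two conceptually distinct steps: first, show that the triangle removal process itself satisfies $\mathcal{P}$ unconditionally with very high probability (removing the conditioning on $\mathcal{Q}$), and second, transfer this conclusion from ${\boldsymbol S}'$ to the initial segment ${\boldsymbol S}_{\alpha N}$ of a uniformly random ordered STS via a likelihood-ratio comparison. The central tool is that both distributions are highly concentrated on the set of quasirandom partial systems, and their densities on this set differ only by an $\exp(O(n^{1-2a}))$ factor.

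First, I would invoke the quasirandomness analysis of the triangle removal process (due to Bohman\textendash Keevash, and adapted in \cite{Kwa16}): up to step $\alpha N$, the graph $G({\boldsymbol S}'_i)$ of uncovered pairs is $(\varepsilon,h)$-quasirandom at every intermediate time with probability at least $1-\exp(-n^{2-b_1})$ for an appropriate $b_1>0$ (depending on $a,h$). Hence $\Pr({\boldsymbol S}'\notin \mathcal{Q})\le \exp(-n^{2-b_1})$ since $\mathcal{Q}\supseteq \mathcal{O}_{\alpha N}^{\varepsilon,h}$. Combined with the hypothesis via the trivial bound
\[
\Pr({\boldsymbol S}'\notin \mathcal{P})\le \Pr({\boldsymbol S}'\notin \mathcal{P}\mid {\boldsymbol S}'\in\mathcal{Q})+\Pr({\boldsymbol S}'\notin\mathcal{Q}) \le 2\exp(-n^{2-b_2}),
\]
for some $b_2>0$ that we fix once and for all.

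Second, I would perform the transfer from ${\boldsymbol S}'$ to ${\boldsymbol S}_{\alpha N}$. For every $S\in\mathcal{O}_{\alpha N}$, $\Pr({\boldsymbol S}'=S)=\prod_{i=0}^{\alpha N-1}t(S_i)^{-1}$, where $t(S_i)$ is the number of triangles in $G(S_i)$, while $\Pr({\boldsymbol S}_{\alpha N}=S)$ is proportional to the number of completions of $S$ to an ordered Steiner triple system. On the quasirandom set $\mathcal{O}_{\alpha N}^{\varepsilon,h}$, each $t(S_i)$ is within $1\pm O(\varepsilon)$ of its ``ideal'' value $(1-i/N)^3 n^3/6$, and Keevash's counting refinement of his existence theorem for designs gives the number of completions up to a $\exp(O(\varepsilon n^2))=\exp(O(n^{2-a}))$ relative error. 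A careful product of these comparisons (as in \cite{Kwa16}) then yields
\[
\frac{\Pr({\boldsymbol S}_{\alpha N}=S)}{\Pr({\boldsymbol S}'=S)}\le \exp(C n^{1-2a})
\]
for every $S\in\mathcal{O}_{\alpha N}^{\varepsilon,h}$, where $C$ depends only on $a,h,\alpha$.

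Finally, I would combine these ingredients: writing $B=\mathcal{O}_{\alpha N}\setminus\mathcal{P}$ and decomposing $\Pr({\boldsymbol S}_{\alpha N}\in B)$ according to whether ${\boldsymbol S}_{\alpha N}\in \mathcal{O}_{\alpha N}^{\varepsilon,h}$,
\[
\Pr({\boldsymbol S}_{\alpha N}\in B)\le \Pr({\boldsymbol S}_{\alpha N}\notin \mathcal{O}_{\alpha N}^{\varepsilon,h})+\exp(Cn^{1-2a})\,\Pr({\boldsymbol S}'\in B),
\]
and both summands are at most $\exp(-\Omega(n^{1-2a}))$: the first because quasirandomness also transfers to ${\boldsymbol S}_{\alpha N}$ with failure probability at most $\exp(-n^{2-b_3})$ for some $b_3>0$ (this itself is obtained by applying the same transfer to the quasirandomness property, a bootstrap that must be set up cleanly), and the second by the bound on $\Pr({\boldsymbol S}'\in B)$ established in the first step, provided $b$ is chosen small enough that $n^{2-b}\gg n^{1-2a}$. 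The main obstacle, and the step requiring the most care, is the likelihood-ratio estimate: it relies on Keevash's precise counting of designs and on controlling the cumulative error from $\alpha N=\Theta(n^2)$ multiplicative factors, each of which must be pinned down to within a $1\pm n^{-a}$ relative error in order for the product to be subexponential in $n^{2-a}$ rather than exponential in $n^2$.
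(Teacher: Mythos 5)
The paper does not prove this theorem; it is reproduced without proof at the start of Section~2 as one of the ``general theorems from [Kwa16]'' (only the new \cref{lem:bite-transfer-new} is proved here). So there is no in-paper proof to compare against, but your sketch can be assessed against the argument it aims to reconstruct, and there are some concrete issues.

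First, the central comparison step contains an arithmetic inconsistency: you correctly note that Keevash-type counting of completions carries a relative error of order $\exp(O(\varepsilon n^2))=\exp(O(n^{2-a}))$, but you then assert a likelihood ratio of only $\exp(Cn^{1-2a})$. There is no mechanism in your sketch for the $n^{2-a}$ exponent to improve to $n^{1-2a}$; the ratio $\Pr({\boldsymbol S}_{\alpha N}=S)/\Pr({\boldsymbol S}'=S)$ for $S\in\mathcal{O}_{\alpha N}^{\varepsilon,h}$ is of order $\exp(O(n^{2-a}))$, and this factor is harmless only because it is multiplied against the hypothesized $\exp(-n^{2-b})$ with $b$ taken small enough that $n^{2-b}\gg n^{2-a}$. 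The exponent $1-2a$ in the conclusion has to come from elsewhere.

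Second, and more seriously, your treatment of $\Pr({\boldsymbol S}_{\alpha N}\notin\mathcal{O}_{\alpha N}^{\varepsilon,h})$ is circular: you propose bounding it ``by applying the same transfer to the quasirandomness property,'' but this is exactly the transfer you are in the middle of proving; you cannot invoke it to establish one of its own ingredients. A non-circular argument must bound $\Pr({\boldsymbol S}_{\alpha N}\notin\mathcal{O}_{\alpha N}^{\varepsilon,h})$ directly, typically via a counting argument combining (a) the Bernstein-type concentration \cref{thm:bernstein-type} applied to the triangle removal process (yielding failure probability of quasirandomness on the order $\exp(-\Omega(n^{1-2a}))$, which is what ultimately governs the final error, not the $\exp(-n^{2-b_1})$ you claim), with (b) an upper bound on completion counts of \emph{arbitrary} partial systems and a lower bound on $|\mathcal{O}|$. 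Your sketch correctly identifies the main two ingredients (TRP quasirandomness plus Keevash counting for the density comparison) but does not supply the non-circular mechanism needed to close the argument, and its claimed exponents do not cohere with the exponents in the statement being proved.
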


\subsection{A coupling lemma\label{sec:nibble-TRP-coupling}}

The triangle removal process can be rather technical to study directly,
so \cite{Kwa16} included a general lemma approximating the first
few steps of the triangle removal process by a binomial random 3-graph
with relatively small edge probability. The idea is that instead of randomly
choosing triples one-by-one avoiding conflicts with previous choices,
one can randomly choose several triples at once, and just delete those
triples which conflict with each other. If the edge probability
is small, there are likely to be few conflicts, so these two processes (at least intuitively) give almost the same distribution.

This lemma in \cite{Kwa16} (specifically, \cite[Lemma~2.10]{Kwa16})
was suitable for studying properties $\mathcal{P}$ that are monotone
increasing in the sense that $S\in\mathcal{P}$ and $S'\supseteq S$
implies $S'\in\mathcal{P}$. Surprisingly, despite the fact that the
existence of perfect matchings (or collections of disjoint perfect
matchings) is a monotone increasing property, in this paper we will
instead need a similar lemma for monotone decreasing properties. Before stating the lemma we need the following definition.

\begin{defn}
For a partial Steiner triple system $S$, let ${\operatorname{R}}\left(S,m\right)$
be the partial Steiner triple system distribution obtained
with $m$ steps of the triangle removal process starting from $G\left(S\right)$. (So, if $S$ has $m'$ edges, then ${\operatorname{R}}\left(S,m\right)$ has $m+m'$ edges, unless the triangle removal process aborts).
\end{defn}

\begin{lem}
\label{lem:bite-transfer-new}Fix sufficiently small $\alpha\in\left(0,1\right)$.
Consider some $S\in{\mathcal O}_{m}^{\alpha,2}$ for some $m\le\alpha N$,
and consider a property $\mathcal{P}$ of unordered 3-graphs (which
may depend on $S$) that is monotone decreasing in the sense that
$G\in\mathcal{P}$ and $G'\subseteq G$ implies $G'\in\mathcal{P}$.
Let ${\boldsymbol S}\sim{\operatorname{R}}\left(S,\alpha N\right)$ and $\boldsymbol G\sim \operatorname{H}^3(n,q)$ with $q=\alpha(1+10\alpha)/n$.
Then
\[
\Pr\left({\boldsymbol S}\notin\mathcal{P}\right)\le\Pr\left(\boldsymbol G\notin\mathcal{P}\right)+e^{-\Omega\left(n^{2}\right)}.
\]
\end{lem}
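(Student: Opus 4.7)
The plan is to construct an explicit coupling $(\mathbf{S}, \mathbf{G})$ under which every triple added to $S$ by the triangle removal process already lies in $\mathbf{G}$ --- that is, $\mathbf{S} \setminus S \subseteq \mathbf{G}$ --- with probability at least $1 - e^{-\Omega(n^2)}$. Combining this coupling with the monotone decreasing assumption on $\mathcal{P}$ and a union bound yields the claimed inequality.

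To build the coupling, I would sample independent indicators $X_t \in \{0, 1\}$ with $\Pr[X_t = 1] = q$ for every triple $t \in \binom{[n]}{3}$ and put $\mathbf{G} = \{t : X_t = 1\}$; independently, I would draw a uniformly random permutation $\pi$ of all triples outside $S$. Then run the following algorithm: process triples in the order prescribed by $\pi$, and accept the current triple $t$ precisely when $X_t = 1$ and the three pairs of $t$ are still uncovered (equivalently, $t$ is a triangle of the current residual graph). Halt as soon as $\alpha N$ triples have been accepted; set $\mathbf{S}$ to be the union of $S$ with the accepted triples. By construction, on the event that the algorithm halts in time, $\mathbf{S} \setminus S \subseteq \mathbf{G}$. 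That the accepted sequence has the law of ${\operatorname{R}}(S, \alpha N)$ follows from a short symmetry argument: for any two triples $t \neq t'$ that are triangles of the residual graph $G(S \cup \mathbf{S}'_i)$ and do not lie in the accepted set $\mathbf{S}'_i$, swapping their positions in $\pi$ together with their coin values $X_t, X_{t'}$ preserves both the joint distribution of $(\pi, X)$ and the event ``the first $i$ accepted triples are exactly $\mathbf{S}'_i$''; hence the $(i{+}1)$-th accepted triple is uniformly distributed over the triangles of the residual graph, exactly as in the triangle removal process.

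It remains to show the algorithm halts in time with probability $1 - e^{-\Omega(n^2)}$. A Chernoff bound on $|\mathbf{G}| = \sum_t X_t$ gives $|\mathbf{G}| \geq (1 + 9\alpha)\alpha N$ except on an $e^{-\Omega(n^2)}$-event, since $\E|\mathbf{G}| = q\binom{n}{3} = (1 + 10\alpha)(1 - O(1/n))\alpha N$. After $i \leq \alpha N$ acceptances the residual graph has density at least $1 - 6\alpha$; provided it also remains $(o(1), 2)$-quasirandom throughout (which holds for the first $\alpha N$ steps of the triangle removal process starting from $S \in \mathcal{O}_m^{\alpha, 2}$ outside a tail event of probability $e^{-\Omega(n^2)}$, by the analysis reviewed in \cref{sec:random-STS}), the fraction of triples in $\binom{[n]}{3}$ that are current triangles is at least $(1 - 6\alpha)^3 \geq 1 - O(\alpha)$. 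Therefore each ``head'' (triple $t$ with $X_t = 1$) is accepted with probability at least $1 - O(\alpha)$, and a bounded-differences / Azuma argument in $(\pi, X)$ shows the total number of acceptances is concentrated around $(1 - O(\alpha))(1 + 10\alpha)\alpha N = (1 + \Omega(\alpha))\alpha N$, well above $\alpha N$, on an event of probability $1 - e^{-\Omega(n^2)}$.

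The main obstacle is this concentration step, which requires both propagating the quasirandomness of the residual graph through the first $\alpha N$ removal steps and a bounded-differences inequality handling the intricate dependence of individual acceptance events on the joint randomness $(\pi, \{X_t\})$. The buffer $(1 + 10\alpha)$ in $q$ has been calibrated precisely to absorb the $O(\alpha)$-rejection losses and leave the slack needed for the coupling to succeed with exponentially good probability.
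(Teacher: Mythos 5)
Your coupling is the same one the paper uses in spirit (process a binomial random 3-graph $\boldsymbol G$ in random order, accept the current triple iff it is still a triangle of the residual graph), and your symmetry argument for why the accepted sequence has the law of $\operatorname R(S,\alpha N)$ is essentially sound, if sketchy. The divergence --- and the gap --- is in the concentration step. The paper does not try to concentrate the number of \emph{acceptances} of the iterative process; instead it introduces $\boldsymbol S^*$, the set of triples of $\boldsymbol G$ that conflict neither with $S$ nor with any other triple of $\boldsymbol G$, observes that every such triple is necessarily accepted (so the number of acceptances is at least $\boldsymbol Y := e(\boldsymbol S^*)$), and then applies the Bernstein-type typical-bounded-differences inequality (\cref{thm:bernstein-type}) to $\boldsymbol Y$. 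The whole point of this choice is that $\boldsymbol Y$ is genuinely $3$-Lipschitz in the edge-indicators of $\boldsymbol G$: toggling one triple changes the isolated/conflicting status of at most three others. Your proposed quantity (total acceptances of the sequential process) is \emph{not} Lipschitz with a bounded constant: flipping a single coin $X_t$ can cascade, since accepting $t$ frees or blocks up to three later triples, each of which frees or blocks up to three more, etc. One can build adversarial inputs $(\pi,X)$ where the acceptance count changes by $\Theta(k)$ after a flip, for any $k$. Since \cref{thm:bernstein-type} gives a tail $\exp(-t^2/(4K^2 np+2Kt))$, you need $K=O(1)$ to reach $e^{-\Omega(n^2)}$ with $p=\Theta(1/n)$ and $t=\Theta(\alpha^2 n^2)$; a plain Azuma bound is hopeless at this scale, and even the sparse Bernstein-type bound fails once $K$ is unbounded.

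A secondary issue: your density/quasirandomness step quietly invokes that the residual graph of the triangle removal process stays $(o(1),2)$-quasirandom for $\alpha N$ steps, uniformly with failure probability $e^{-\Omega(n^2)}$, ``by the analysis reviewed in \cref{sec:random-STS}.'' That section does not contain such an analysis; \cref{lem:triangle-removal-transfer} \emph{conditions on} quasirandomness of $\boldsymbol S'$ rather than establishing it, and an exponential-in-$n^2$ failure bound for the triangle removal process is a nontrivial external input. The paper deliberately avoids needing it: quasirandomness is used only once, for the \emph{initial} graph $G(S)$ (guaranteed by $S\in\mathcal O_m^{\alpha,2}$), to count triangles via \cref{lem:triangle-count} and hence bound $\E\boldsymbol Y$. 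If you instead observe, as you nearly did, that $\boldsymbol S^*\subseteq$ (accepted set) and control $e(\boldsymbol S^*)$, both of your obstacles disappear and you recover the paper's proof.
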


The proof of \cref{lem:bite-transfer-new} is somewhat more complicated
than \cite[Lemma~2.10]{Kwa16}, but the intuition is basically the
same: if $\alpha$ is small and $S$ is a partial Steiner triple system
with few edges, then the outcome of $\operatorname{H}^3(n,q)$ is likely to ``almost'' be
a partial Steiner triple system and ``almost'' avoid conflicts
with $S$, and therefore approximates ${\operatorname{R}}\left(S,\alpha N\right)$. For the proof we will need the following concentration
inequality, which appears as \cite[Theorem~2.11]{Kwa16}, and is also a direct consequence of \cite[Theorem~1.3]{War16}. It is a
bounded-differences inequality with Bernstein-type tails which can
be used to analyse sparse random hypergraphs. Standard bounded-difference
inequalities such as the Azuma\textendash Hoeffding inequality do
not provide strong enough tail bounds to apply \cref{lem:triangle-removal-transfer}.
\begin{thm}
\label{thm:bernstein-type}Let $\boldsymbol{\omega}=\left(\boldsymbol{\omega}_{1},\dots,\boldsymbol{\omega}_{n}\right)$
be a sequence of independent, identically distributed random variables
with $\Pr\left(\boldsymbol{\omega}_{i}=1\right)=p$ and $\Pr\left(\boldsymbol{\omega}_{i}=0\right)=1-p$.
Let $f:\left\{ 0,1\right\} ^{n}\to{\operatorname{R}}$ satisfy the Lipschitz condition
$\left|f\left(\boldsymbol{\omega}\right)-f\left(\boldsymbol{\omega}'\right)\right|\le K$
for all pairs $\boldsymbol{\omega},\boldsymbol{\omega}'\in\left\{ 0,1\right\} ^{n}$
differing in exactly one coordinate. Then
\[
\Pr\left(\left|f\left(\boldsymbol{\omega}\right)-\E f\left(\boldsymbol{\omega}\right)\right|>t\right)\le\exp\left(-\frac{t^{2}}{4K^{2}np+2Kt}\right).
\]
\end{thm}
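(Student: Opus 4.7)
The plan is to prove this as a standard Bernstein-type inequality for the Doob martingale associated with the function $f$, exploiting the fact that each coordinate $\boldsymbol{\omega}_i$ is Bernoulli with small parameter $p$ (so the conditional variance of each increment is much smaller than its worst-case magnitude). The Bernoulli-small-$p$ structure is what yields the Bernstein-type denominator $4K^2np + 2Kt$ rather than the $K^2 n + Kt$ that a plain Azuma--Hoeffding argument would give, and this is what makes the bound strong enough to combine with \cref{lem:triangle-removal-transfer}.

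Concretely, I would introduce the Doob martingale $Y_i = \E[f(\boldsymbol{\omega})\mid \boldsymbol{\omega}_1,\dots,\boldsymbol{\omega}_i]$ with $Y_0 = \E f(\boldsymbol{\omega})$ and $Y_n = f(\boldsymbol{\omega})$, and write $f(\boldsymbol{\omega}) - \E f(\boldsymbol{\omega}) = \sum_{i=1}^n X_i$ with $X_i = Y_i - Y_{i-1}$. Conditioning on $\boldsymbol{\omega}_{<i} = (\boldsymbol{\omega}_1,\dots,\boldsymbol{\omega}_{i-1})$, one can write
\[
X_i = \begin{cases} (1-p)\,\Delta_i & \text{if } \boldsymbol{\omega}_i = 1,\\ -p\,\Delta_i & \text{if } \boldsymbol{\omega}_i = 0,\end{cases}
\]
where $\Delta_i$ is the difference of the two conditional expectations obtained by fixing $\boldsymbol{\omega}_i = 1$ versus $\boldsymbol{\omega}_i = 0$. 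By the Lipschitz hypothesis, $|\Delta_i| \le K$, and therefore $|X_i| \le K$, $\E[X_i \mid \boldsymbol{\omega}_{<i}] = 0$, and $\E[X_i^2 \mid \boldsymbol{\omega}_{<i}] = p(1-p)\Delta_i^2 \le pK^2$.

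Next I would control the conditional moment generating function: using the standard Bennett/Bernstein bound $\E[e^{\lambda X}] \le \exp\bigl(\sigma^2(e^{\lambda K} - 1 - \lambda K)/K^2\bigr)$ valid for any centered random variable with $|X|\le K$ and $\E[X^2]\le \sigma^2$, and the inequality $e^x - 1 - x \le x^2/(2(1-x/3))$ for $x < 3$, one gets
\[
\E\bigl[e^{\lambda X_i}\bigm|\boldsymbol{\omega}_{<i}\bigr] \le \exp\!\left(\frac{\lambda^2 p K^2}{2(1-\lambda K/3)}\right)
\]
whenever $0 \le \lambda < 3/K$. Multiplying these bounds over $i$ by the tower property and then applying Markov yields
\[
\Pr\bigl(f(\boldsymbol{\omega}) - \E f(\boldsymbol{\omega}) > t\bigr) \le \exp\!\left(\frac{\lambda^2 n p K^2}{2(1-\lambda K/3)} - \lambda t\right),
\]
and the choice $\lambda = t/(npK^2 + Kt/3)$ gives $\Pr\bigl(f(\boldsymbol{\omega}) - \E f(\boldsymbol{\omega}) > t\bigr) \le \exp\!\bigl(-t^2/(2npK^2 + 2Kt/3)\bigr)$. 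Running the same argument with $-f$ in place of $f$ gives the matching lower-tail bound, and a union bound over the two tails produces an extra factor of $2$ which, together with the $3/2$ slack in the denominator constants, is absorbed into the slightly weaker stated form $\exp\bigl(-t^2/(4K^2 np + 2Kt)\bigr)$ (it is straightforward to check that $2\exp\bigl(-t^2/(2npK^2 + 2Kt/3)\bigr) \le \exp\bigl(-t^2/(4npK^2 + 2Kt)\bigr)$ for all $t \ge 0$, as the only nontrivial verification is the small-$t$ regime where the exponent is close to $0$).

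There is no real obstacle here; the only mildly technical step is writing the Bennett-type MGF bound so that the Bernoulli parameter $p$ (rather than $1$) appears in the variance factor, which is what turns a Hoeffding-style $K^2 n$ denominator into the Bernstein-style $K^2 n p$ denominator that is essential for the applications in \cref{sec:random-STS}. Alternatively, as the authors note, one can simply invoke \cite[Theorem~1.3]{War16}, which gives precisely such a Bernstein-type bounded-differences inequality in one line.
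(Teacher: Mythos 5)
The paper itself offers no proof of this statement---it is imported verbatim from \cite[Theorem~2.11]{Kwa16} and attributed to \cite[Theorem~1.3]{War16}---so there is no internal argument to compare yours against; your Doob-martingale/Bennett--Bernstein route is the standard and natural one. Everything up to and including the one-sided bound $\Pr\left(f\left(\boldsymbol{\omega}\right)-\E f\left(\boldsymbol{\omega}\right)>t\right)\le\exp\left(-t^{2}/\left(2K^{2}np+2Kt/3\right)\right)$ is correct: the increment decomposition, the bounds $\left|X_{i}\right|\le K$ and $\E\left[X_{i}^{2}\mid\boldsymbol{\omega}_{<i}\right]=p\left(1-p\right)\Delta_{i}^{2}\le pK^{2}$, the conditional MGF estimate, and the choice of $\lambda$ all check out.

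The last step, however, is a genuine error. You assert that $2\exp\left(-t^{2}/\left(2K^{2}np+2Kt/3\right)\right)\le\exp\left(-t^{2}/\left(4K^{2}np+2Kt\right)\right)$ for all $t\ge0$ and that the small-$t$ regime is the only thing to check; that regime is precisely where the inequality fails, since as $t\to0$ the left side tends to $2$ and the right side to $1$ (equivalently, the difference of the two exponents tends to $0$, so it cannot stay above $\log2$). Writing $V=K^{2}np$, one has $\tfrac{t^2}{2V+2Kt/3}\ge 2\cdot\tfrac{t^2}{4V+2Kt}$, so your absorption is valid exactly when $t^{2}/\left(4V+2Kt\right)\ge\log2$, and not below that. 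In fact the theorem as literally stated is false for small $t$: take $n=1$, $p=1/2$, $f\left(\omega_{1}\right)=K\omega_{1}$; then $\left|f-\E f\right|=K/2$ surely, so the left-hand side equals $1$ for every $t<K/2$ while the right-hand side is strictly less than $1$. The honest output of your (otherwise sound) argument is the two-sided bound with a leading factor of $2$, or the stated bound restricted to $t$ with exponent at least $2\log2$. This is an imprecision inherited from the source statement rather than a flaw in your strategy, and it is harmless for the paper---the only application, in the proof of \cref{lem:bite-transfer-new}, has $t=\Omega\left(n^{2}\right)$ and exponent $\Omega\left(n^{2}\right)$, where a factor of $2$ disappears into the $\Omega\left(\cdot\right)$---but as a proof of the statement exactly as written, the final verification does not go through and cannot be repaired without weakening the statement.
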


We will also need the fact that quasirandom graphs have approximately
the ``right'' number of triangles.
\begin{lem}
\label{lem:triangle-count}Let $\varepsilon$ be sufficiently small and let $G$ be an $\left(\varepsilon,2\right)$-quasirandom
graph with density $d$. Then $G$ has $\left(1\pm 3\varepsilon\right)d^{3}n^{3}/6$
triangles.
\end{lem}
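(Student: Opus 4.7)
The plan is to count triangles $T$ by the standard ``fix an apex vertex'' double count, $3T=\sum_{w\in V(G)} e(G[N_G(w)])$, and then estimate each summand using the $(\varepsilon,2)$-quasirandomness hypothesis applied to singletons and to pairs.

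For a fixed vertex $w$, quasirandomness applied to the set $A=\{w\}$ gives $|N_G(w)|=(1\pm\varepsilon)dn$. To estimate $e(G[N_G(w)])$, I would use the edge-endpoint identity $2e(G[N_G(w)])=\sum_{v\in N_G(w)}|N_G(v)\cap N_G(w)|$, and for each such $v\neq w$ apply quasirandomness to $A=\{v,w\}$ to get $|N_G(v)\cap N_G(w)|=(1\pm\varepsilon)d^2n$. Combining these,
\[
e(G[N_G(w)])=\tfrac{1}{2}\,(1\pm\varepsilon)dn\cdot(1\pm\varepsilon)d^2n=\tfrac{1}{2}(1\pm\varepsilon)^2 d^3 n^2.
\]
Summing over the $n$ choices of $w$ and dividing by $3$ yields $T=\tfrac{1}{6}(1\pm\varepsilon)^2 d^3 n^3$.

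To finish, I would observe that $(1+\varepsilon)^2=1+2\varepsilon+\varepsilon^2\le 1+3\varepsilon$ and $(1-\varepsilon)^2\ge 1-2\varepsilon\ge 1-3\varepsilon$ whenever $\varepsilon$ is sufficiently small (in fact for any $\varepsilon\in[0,1]$), so the claimed bound $T=(1\pm 3\varepsilon)d^3n^3/6$ follows. There is no serious obstacle here; the only minor design choice is to prefer the apex count above to the alternative edge count $6T=\sum_{uv\in E(G)}|N_G(u)\cap N_G(v)|$. The latter would give $T=(1\pm\varepsilon)d^3n^2(n-1)/6$, which has a cleaner $\varepsilon$-dependence but then forces one to absorb the gap between $n^2(n-1)$ and $n^3$ into the error; the apex version absorbs everything at once through the compounded $(1\pm\varepsilon)^2$ factor, and that is exactly why the stated error is $3\varepsilon$ rather than $\varepsilon$.
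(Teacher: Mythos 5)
Your proof is correct and essentially the same as the paper's: both use $(\varepsilon,1)$-quasirandomness once and $(\varepsilon,2)$-quasirandomness once, yielding the same compounded factor $(1\pm\varepsilon)^2\subseteq(1\pm3\varepsilon)$; the paper simply organizes the double count per edge (number of edges times common neighbours, divided by $3$) rather than per apex vertex. One small slip in your closing aside: with unordered edges the identity is $3T=\sum_{uv\in E(G)}|N_G(u)\cap N_G(v)|$, not $6T$, and the paper's proof actually \emph{is} that route yet still incurs the compounded $(1\pm\varepsilon)^2$ error, because it too derives the edge count from $(\varepsilon,1)$-quasirandomness rather than using $e(G)=d\binom{n}{2}$ exactly.
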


\begin{proof}
By $\left(\varepsilon,1\right)$-quasirandomness, the sum of the degrees
is $\left(1\pm\varepsilon\right)dn^2$, so there are $\left(1\pm\varepsilon\right)dn^2/2$
edges. For each such edge, by $\left(\varepsilon,2\right)$-quasirandomness
there are $\left(1\pm\varepsilon\right)d^{2}n$ common neighbours
of that edge, each of which gives a triangle containing that edge.
Therefore the total number of triangles is $$\left(\left(1\pm\varepsilon\right)dn^2/2\right)\left(\left(1\pm\varepsilon\right)d^{2}n\right)/3=\left(1\pm3\varepsilon\right)d^{3}n^{3}/6$$
as desired. 
\end{proof}
Now we can prove \cref{lem:bite-transfer-new}.
\begin{proof}[Proof of \cref{lem:bite-transfer-new}]
Let ${\boldsymbol S}^{*}$ be obtained from ${\boldsymbol G}$ by deleting every edge
which intersects an edge of $S$ or another edge of ${\boldsymbol G}$
in two vertices. We can couple $\left({\boldsymbol S}^{*},{\boldsymbol G}\right)$ and
${\boldsymbol S}$ in such a way that ${\boldsymbol G}\supseteq{\boldsymbol S}$
as long as ${\boldsymbol S}^{*}$ has at least $\alpha N$ edges (randomly
order the edges in ${\boldsymbol G}$ and run the triangle removal process
with this ordering). Let ${\boldsymbol Y}$ be the number of edges in
${\boldsymbol S}^{*}$, which is the number of edges in ${\boldsymbol G}$ that
do not conflict with $S$ and are \emph{isolated} in the sense that
they do not intersect any other edge of ${\boldsymbol G}$ in more than
one vertex. By \cref{lem:triangle-count}, there are at least $\left(1\pm3\alpha\right)\left(1-\alpha\right)^{3}n^{3}/6=\left(1\pm7\alpha\right)n^{3}/6$
possibilities for such an edge, and each is present and isolated
with probability
\[
\left(\alpha\left(1+10\alpha\right)/n\right)\left(1-\alpha\left(1+10\alpha\right)/n\right)^{3\left(n-3\right)+1}=\alpha\left(1+10\alpha-o\left(1\right)\right)e^{-3\alpha\left(1+10\alpha\right)}/n=\left(1+ 7\alpha+O(\alpha^2)\right)\alpha/n
\]
for sufficiently small $\alpha$. This implies that $\E{\boldsymbol Y}=\alpha N+\Omega\left(n^{2}\right)$.
Next, observe that adding an edge to ${\boldsymbol G}$ can increase ${\boldsymbol Y}$
by at most 1, and removing an edge to ${\boldsymbol G}$ can increase ${\boldsymbol Y}$
by at most 3 (by making three edges isolated). So, by \cref{thm:bernstein-type},
\[
\Pr\left({\boldsymbol Y}<\alpha N\right)\le\Pr\left(\left|{\boldsymbol Y}-\E Y\right|\ge\Omega\left(n^{2}\right)\right)\le\exp\left(-\Omega\left(\frac{\left(n^{2}\right)^{2}}{3^{2}\binom{n}{3}\left(\alpha\left(1+10\alpha\right)/n\right)+3n^{2}}\right)\right)=e^{-\Omega\left(n^{2}\right)}.
\]
It follows that 
\[
\Pr\left({\boldsymbol S}\notin\mathcal{P}\right)\le\Pr\left({\boldsymbol G}\notin\mathcal{P}\right)+\Pr\left({\boldsymbol Y}<\alpha N\right)=\Pr\left({\boldsymbol G}\notin\mathcal{P}\right)+e^{-\Omega\left(n^{2}\right)}.\tag*{\qedhere}
\]
\end{proof}

\section{Sufficient properties for packing\label{sec:sufficient-properties}}

In this section we state some lemmas from which \cref{conj:packing}
will follow. Basically, the idea is to define some properties which
can be shown to hold a.a.s.\ in random Steiner triple systems, and
which can be used to find ``many'' perfect matchings. The first of these properties
is an ``upper-quasirandomness'' condition, which is a requirement
to effectively apply the so-called regularity method in the sparse setting.
\begin{defn}
For vertex sets $X,Y,Z$ in a 3-graph $G$, let $e_{G}\left(X,Y,Z\right)$
be the number of orderings $\left(x,y,z\right)$ of edges $\left\{ x,y,z\right\} $
with $x\in X,y\in Y,z\in Z$ (if $X,Y,Z$ are disjoint this is the
number of edges with exactly one vertex in each of $X,Y,Z$). A 3-graph
is $\left(p,\beta\right)$-\emph{upper-quasirandom} if for any vertex subsets
$X,Y,Z$, we have $e\left(X,Y,Z\right)\le p\left|X\right|\left|Y\right|\left|Z\right|+\beta n^{3}p$. \end{defn}

The second property we will need is the existence of certain special subgraphs which we refer to as \emph{absorbers}. We will later see that it is not very hard to construct ``almost''-perfect matchings, and we will be able to use the special features of absorbers to complete almost-perfect matchings into perfect matchings (by ``absorbing'' the uncovered vertices). This idea falls into the framework of the \emph{absorption method}, which was first introduced as a general method by R\"odl, Ruci\'nski and Szemer\'edi in \cite{RRS}, and has had numerous applications since.

\begin{defn}[sub-absorbers and absorbers]
\label{def:sub-absorber} 
The absorbers we use (see also \cref{fig:absorber} for an illustration) are based on \emph{sub-absorbers} and will be defined in two steps: 
\begin{itemize} 
\item A \emph{sub-absorber} rooted on a triple
of vertices $x,y,z$ is a set of five edges of the form
\[
\left\{ \left\{ x,x_{1},x_{2}\right\} ,\left\{ y,y_{1},y_{2}\right\} ,\left\{ z,z_{1},z_{2}\right\} ,\left\{ x_{1},y_{1},z_{1}\right\} ,\left\{ x_{2},y_{2},z_{2}\right\} \right\} .
\]
We call $x,y,z$ the \emph{rooted vertices }of the sub-absorber and
we call the other nine vertices the \emph{external vertices}. If an edge contains a rooted vertex, we call it a \emph{rooted edge}

\item An \emph{absorber }rooted on a triple of vertices $x,y,z$ is a set
of $13$ edges obtained in the following way. Put three disjoint
edges $\left\{ x,x_{1},x_{2}\right\} ,\left\{ y,y_{1},y_{2}\right\} ,\left\{ z,z_{1},z_{2}\right\} $,
then put a sub-absorber rooted on $\left\{ x_{1},y_{1},z_{1}\right\} $
and a sub-absorber rooted on $\left\{ x_{2},y_{2},z_{2}\right\} $
(in such a way that no pair of edges intersects in more than
one vertex). We call $x,y,z$ the \emph{rooted vertices} of the absorber
and the other $18$ vertices the \emph{external vertices}. If an edge contains a rooted vertex, we call it a \emph{rooted edge}. Note that
the edges of an absorber can be partitioned into two matchings:
a perfect matching with seven edges (which we call the \emph{covering
matching}), and a matching with six edges (which we call the
\emph{non-covering matching}).
\end{itemize}
\end{defn}

\begin{figure}[h]
\begin{center}
\includegraphics{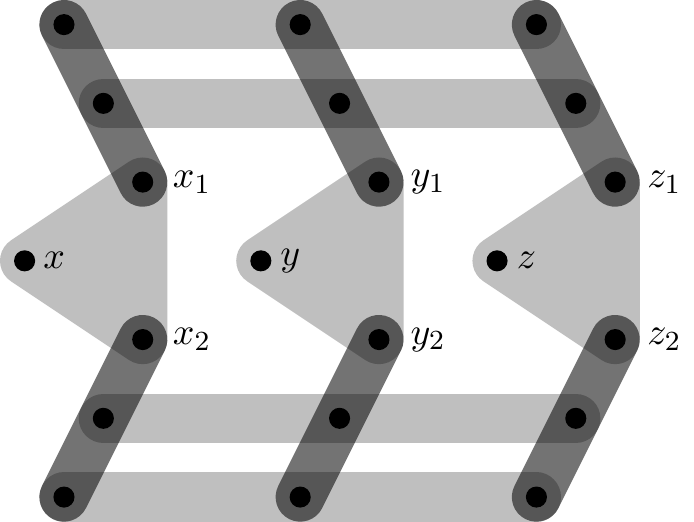}
\end{center}

\caption{\label{fig:absorber}An illustration of an absorber for $\left(x,y,z\right)$.
The light edges are the covering matching and the dark edges
are the non-covering matching.}
\end{figure}

The crucial property of an absorber, that allows it to be used to complete a matching into
a perfect matching, is the existence of the covering and non-covering
matching (we can choose whether to cover the root vertices $x,y,z$
or not). The observant reader may notice that sub-absorbers themselves
are simpler structures also having a covering and non-covering matching;
the reason we consider the larger absorbers is that it is easier to
find a rooted absorber in a random 3-graph than a rooted sub-absorber
(mainly due to the fact that the rooted edges in an absorber
have no common incident edges).

In the proof of our main result we will want to construct our perfect matchings one-by-one, so it will
be important that we can find absorbers disjoint to a set of perfect
matchings that have already been constructed. In order to do so, we will show that the $3$-graphs we are working with are ``resilient'' with respect to absorbers in the sense that every ``dense'' subgraph contains at least one absorber for each triple.

\begin{defn}
A $3$-graph $G$ is \emph{$D$-absorber-resilient} if every subgraph
$G'\subseteq G$ with minimum degree at least $D$ has an absorber
rooted on every triple $x,y,z$ of distinct vertices of $G$.
\end{defn}

Now, the properties we will use for packing perfect matchings are as follows.  It will be useful for the reader to think of $\beta$ as a very small constant which controls
the error terms, and $\alpha$ as a constant that is much larger (but
still quite small in absolute terms) that measures the fraction of
a Steiner triple system we are considering.

\begin{defn}
Consider $\alpha,\beta>0$, and let $S$ be any $3$-graph. We say that $S$ is \emph{$\left(\alpha,\beta\right)$-good
}if it satisfies the following properties.
\begin{enumerate}
\item {\bf Almost-regularity}: for every vertex $v$ we have $\deg_{S}\left(v\right)=\alpha n/2\pm\beta n$;
\item {\bf Pseudorandomness}: $S$ is $\left(\alpha/n,\beta\right)$-upper-quasirandom;
\item {\bf Robust absorber-resilience}: for each $w\ge\beta n$, letting $\eta=w/n$, all but at most $$\exp\left(-10^{-8}(w/n)^4\alpha^2n\right)\binom{n}{w}$$
of the $w$-vertex induced subgraphs $S\left[W\right]$ are $0.999\eta^{2}\alpha\left(n/2\right)$-absorber-resilient.
\end{enumerate}
\end{defn}

Our proof strategy for finding many perfect matchings will roughly go as follows. Suppose that ${\boldsymbol S}$ is a typical {\bf STS}$(n)$. We show that (the edge-set of) ${\boldsymbol S}$ can be decomposed into $(\alpha,\beta)$-good subgraphs, for some carefully chosen $\alpha,\beta$. Then, in each of these subgraphs, we show that one can approximately decompose its edges into perfect matchings. Together, we obtain our desired collection of $(1-o(1))n/2$ perfect matchings.

One might wonder why we need this intermediate stage of partitioning ${\boldsymbol S}$, instead of working with ${\boldsymbol S}$ directly. The reason we do so
is that ``small bits'' of ${\boldsymbol S}$ behave like the binomial random hypergraph model (recall \cref{lem:bite-transfer-new}) so we can borrow some tools and ideas from this well-studied model in order to construct our matchings.

Now we state our key lemmas, which together imply \cref{conj:packing}.

\begin{lem}
\label{conj:random-good} There is $r\in \mathbb{N}$ such that the following holds. Suppose $n$ is congruent to $1$ or $3$ mod $6$ and let ${\boldsymbol S}$ be a uniformly random {\bf STS}$(n)$. Then 
a.a.s. the edge-set of ${\boldsymbol S}$ can be partitioned into $r$ spanning subgraphs that are $\left(1/r,o\left(1\right)\right)$-good.
\end{lem}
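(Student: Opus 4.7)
The plan is to construct the required partition probabilistically. Choose a uniformly random ordering of the $N$ edges of $\boldsymbol S$ and cut it into $r$ consecutive blocks of size $\alpha N$ with $\alpha=1/r$, obtaining random spanning subgraphs $\boldsymbol S^{(1)},\dots,\boldsymbol S^{(r)}$. By the symmetry of the blocks and a union bound over the $r=O(1)$ parts, it suffices to show that $\boldsymbol S^{(1)}$ alone is $(\alpha,o(1))$-good a.a.s. Because a uniformly random $\textbf{STS}(n)$ equipped with a uniformly random ordering is just a uniformly random ordered $\textbf{STS}(n)$, the block $\boldsymbol S^{(1)}$ is precisely the (unordered) edge-set of the first $\alpha N$ edges of a uniformly random ordered $\textbf{STS}(n)$.

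I would then invoke \cref{lem:triangle-removal-transfer} with this $\alpha$, taking $\mathcal{P}$ to be the family of $(\alpha,o(1))$-good partial STS's and $\mathcal{Q}\supseteq\mathcal{O}_{\alpha N}^{n^{-a},h}$ (for sufficiently small $a$ and an appropriate constant $h$) to be the event that every prefix $G(S_i)$ is $(n^{-a},h)$-quasirandom. This reduces the statement to the super-exponential estimate that $\alpha N$ steps of the triangle removal process, conditional on $\mathcal{Q}$, yield an $(\alpha,o(1))$-good partial STS except with probability $\exp(-n^{2-b})$.

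The three conditions of ``good'' are then verified in turn. \emph{Almost-regularity} holds because $\deg_S(v)=\tfrac{1}{2}\bigl(n-1-\deg_{G(S_{\alpha N})}(v)\bigr)$ and the degree of $v$ in the uncovered graph concentrates about $(n-1)(1-\alpha)$ with super-exponentially small tails throughout the process, e.g.\ via \cref{thm:bernstein-type} applied to the binomial approximation of \cref{lem:bite-transfer-new}. \emph{Upper-quasirandomness} is monotone decreasing in $S$, so \cref{lem:bite-transfer-new} transfers it to $\boldsymbol G\sim\operatorname{H}^3(n,q)$ with $q=(1+O(\alpha))\alpha/n$; there $e(X,Y,Z)$ is a sum of independent Bernoullis, and a Bernstein bound with a union bound over the $2^{3n}$ triples $(X,Y,Z)$ yields the required tail provided $\beta\gg n^{-b/2}$. \emph{Robust absorber-resilience} is also monotone decreasing in $S$ (if $S'\subseteq S$ then $S'[W]\subseteq S[W]$, and any large-minimum-degree subgraph of $S'[W]$ is likewise a subgraph of $S[W]$, inheriting its absorbers), so once more we may pass to $\operatorname{H}^3(n,q)$.

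This last verification is the main obstacle. Unlike properties~(1) and~(2), it ranges over exponentially many subgraphs $G'\subseteq\boldsymbol G[W]$ of minimum degree at least $0.999(w/n)^2\alpha n/2$ and over all rooted triples, so a naive union bound over such $G'$ is ruinous. I would handle it by invoking the sparse hypergraph regularity lemma (\cref{sec:sparse regularity}) together with the K\L R-type embedding theorem for linear $3$-graphs (\cref{sec:KLR-proof}): these imply that in any sufficiently dense subgraph of a typical $\boldsymbol G[W]$ one can embed the $21$-vertex, $13$-edge absorber rooted on any prescribed triple, with a failure probability small enough that, even after union-bounding over $w\ge\beta n$ and over $W$, the prescribed quantitative bound $\exp\bigl(-10^{-8}(w/n)^4\alpha^2 n\bigr)\binom{n}{w}$ is satisfied. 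With all three properties in hand, the transfer theorem delivers the goodness of $\boldsymbol S^{(1)}$ and the opening symmetry argument completes the proof.
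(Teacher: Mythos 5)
Your high-level structure is right — partition into $r$ equidistributed blocks, reduce to one block by symmetry, verify the three goodness properties via \cref{lem:triangle-removal-transfer} and \cref{lem:bite-transfer-new} — and this is indeed what the paper does. But there are two concrete gaps, one of them serious.

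\textbf{Rooted absorbers and K\L R.} Your plan for robust absorber-resilience is to ``embed the $21$-vertex, $13$-edge absorber rooted on any prescribed triple'' via the sparse regularity lemma and \cref{thm:KLR}. This doesn't work as stated: \cref{thm:KLR} counts \emph{canonical unrooted} copies of $H$ across clusters, and gives you no control over which specific vertices the image visits. The absorber is a rooted structure, and this is flagged in the paper as exactly the obstruction (``an absorber is a rooted structure. We will need some additional tricks''). The paper's resolution is a two-phase argument that you are missing. It sets $\alpha'=\alpha/2$, conditions on the first half $\boldsymbol S_{\alpha' N}$ (a fixed quasirandom partial system $S$), and treats only the second half via the binomial approximation. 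The rooted edges through $x,y,z$ are drawn from $S$ — which has plenty of them through each vertex once degrees are shown to concentrate on subsets (\cref{claim:almost-regular}) — and the remaining edges are sought in the random $\boldsymbol G$. Crucially, the pairs $\{a_i^v,b_i^v\}$ lying in rooted edges of $S$ are \emph{contracted} to single vertices (\cref{def:contracted}), after which what must be embedded in $\boldsymbol G$ is the \emph{contracted absorber}, an \emph{unrooted} linear $3$-graph with $m_3<1$ (\cref{lem:m3-contracted-absorber}). Only then is \cref{thm:KLR} applicable, together with \cref{lem:sub-absorber-in-dense} to find the structure in the cluster graph. Without some version of this contraction trick, the reduction to K\L R does not go through.

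\textbf{Upper-quasirandomness.} \cref{lem:bite-transfer-new} compares $\boldsymbol S\sim \operatorname{R}(S,\alpha N)$ to $\boldsymbol G\sim \operatorname{H}^3(n,q)$ with the inflated density $q=\alpha(1+10\alpha)/n$. In the binomial model a Chernoff/Bernstein bound shows $e(X,Y,Z)\le q|X||Y||Z|+o(n^2)$, but $q|X||Y||Z| = (\alpha/n)|X||Y||Z| + 10\alpha^2|X||Y||Z|/n$ and the second term can be $\Theta(\alpha^2 n^2)$. Since $\alpha=1/r$ is a fixed constant, this gives only $(\alpha/n,\,O(\alpha))$-upper-quasirandomness, not the required $(\alpha/n,\,o(1))$. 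The paper handles this by a further subdivision of $\boldsymbol S_{\alpha N}$ into $q$ blocks of density $\alpha/q$; each block contributes error $O((\alpha/q)^2 n^2)$, summing to $O(\alpha^2 n^2/q)$, and taking $q$ large (but still constant) gives the $o(1)$ claim. Your ``$\beta\gg n^{-b/2}$'' condition misses that the dominant error comes from the systematic density inflation, not the tail.

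One smaller point: the quantitative exceptional-set bound $\exp(-10^{-8}(w/n)^4\alpha^2 n)\binom{n}{w}$ arises in the paper from degree concentration on random $w$-subsets (\cref{claim:almost-regular}), not from a union bound over K\L R failures; the absorber claim (\cref{claim:absorbers-in-subsystems}) holds a.a.s.\ for \emph{all} $W$, because the per-$W$ failure probability is $e^{-\Omega(n^2)}$ which swamps $\binom{n}{w}$. Your proposal conflates these two sources of error.
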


\begin{lem}
\label{conj:pack-in-good}For any $\alpha=\Omega\left(1\right)$ and
$n\equiv0\operatorname{mod}3$, every $n$-vertex $\left(\alpha,o\left(1\right)\right)$-good
linear 3-graph $S$ (that is, a partial Steiner triple system) has $\left(\alpha/2-o\left(1\right)\right)n$
disjoint perfect matchings.
\end{lem}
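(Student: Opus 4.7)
My plan is to use the absorption method combined with a random edge-partitioning, in the spirit of \cite{FKL17}. I will reserve a small fraction of the edges of $S$ to construct a family of \emph{absorbing gadgets}, randomly distribute the remaining edges into $M = (\alpha/2 - o(1))n$ color classes that each look like an ``approximate'' perfect matching, find a near-perfect matching in each color class via the sparse regularity method, and use the reserved absorbing gadgets to complete each near-perfect matching into a full perfect matching.

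For the absorber construction, let $\mu = o(1)$ be a small parameter. I first take a random vertex subset $W \subseteq V(S)$ of size $\mu n$: by Property~3 of $(\alpha, o(1))$-goodness (robust absorber-resilience), $S[W]$ is typically $0.999\,\mu^2 \alpha(n/2)$-absorber-resilient, and so contains an absorber rooted at every triple even after we delete a small number of edges per vertex. Iteratively applying this property, I construct edge-disjoint absorbing gadgets $A_1,\dots,A_M$ (each supported mostly within $W$) with the standard flexible-absorber property from the absorption-method literature: each $A_i$ can turn any vertex set $T_i \subseteq V \setminus V(A_i)$ of size $o(n)$ with $|T_i|\equiv 0 \pmod 3$ into a perfect matching on $V(A_i)\cup T_i$ using only the edges of $A_i$. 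The almost-regularity and upper-quasirandomness of $S$ ensure that the edges used by the gadgets amount to a negligible fraction of $S$.

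For the main part, I randomly partition the edges of $S$ outside $\bigcup_i A_i$ into color classes $S_1',\dots,S_M'$ uniformly at random. Each $S_i'$ inherits appropriate sparse versions of almost-regularity and upper-quasirandomness from $S$, and has expected average degree $\approx 1$, so its edge-count is close to $n/3$ (the size of a single perfect matching). Using the sparse regularity lemma for linear 3-graphs (\cref{sec:sparse regularity}), the almost-perfect matching lemmas of \cref{sec:almost-perfect-matchings}, and the linear-hypergraph K{\L}R transference theorem of \cref{sec:KLR-proof}, I find in each $S_i'$ a near-perfect matching $N_i$ that leaves an uncovered set $T_i$ of size $o(n)$. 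The gadget $A_i$ then completes $N_i$ into a perfect matching $M_i$, and the $M_i$ are pairwise edge-disjoint because the color classes and the gadgets are all pairwise disjoint.

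I expect the main technical obstacle to be finding the near-perfect matching $N_i$ in each color class. Because each $S_i'$ is extremely sparse (average degree $\Theta(1)$, edge density $\Theta(1/n^2)$), classical dense-regularity tools do not apply directly; the sparse regularity method combined with the K{\L}R-type transference is precisely designed to transfer the existence of near-perfect matchings from ``typical'' dense pseudorandom 3-graphs down to this very sparse regime. A secondary challenge is arranging, via careful control of the random edge-partitioning (extending the argument of \cite{FKL17}), that each leftover set $T_i$ has the right size and divisibility to be absorbed by the corresponding gadget $A_i$; this will require parity/divisibility adjustments and concentration estimates showing that $|T_i| = o(n)$ holds simultaneously for all $i$ with sufficiently high probability.
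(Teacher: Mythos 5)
Your absorber setup (random small vertex set $W$, use robust absorber-resilience of $S[W]$ to build gadgets) is broadly in the right spirit, but the main step of your plan has a fatal gap: you cannot split the edges of $S$ directly into $M=(\alpha/2-o(1))n$ colour classes and then find a near-perfect matching inside each class. If you distribute the roughly $\alpha n^2/6$ edges uniformly into $\Theta(n)$ classes, each class has about $n/3$ edges and average degree $\Theta(1)$; the per-vertex degree in a class is approximately $\mathrm{Poisson}(1)$, so a constant fraction ($\approx 1/e$) of the vertices are \emph{isolated} in each class. The leftover set $T_i$ therefore has size $\Theta(n)$, not $o(n)$, and no absorbing gadget of the type discussed in this paper can absorb that many vertices. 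None of the tools you invoke can rescue this: \cref{lem:almost-matching} and \cref{thm:stronger-PS} both require minimum or average degree $\Theta(n)$, the sparse regularity lemma has no content at density $\Theta(1/n^2)$ with $O(1)$ degrees, and \cref{thm:KLR} is a counting lemma for \emph{fixed-size} subgraphs — it says nothing about spanning structures such as near-perfect matchings, which is a fundamentally harder transference problem.

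The paper avoids this by partitioning into only a \emph{constant} number $\ell=\delta^{-5/2}$ of classes $G_1,\dots,G_\ell$, each of which still has all degrees $\Theta(n)$. Within a single such $G_i$, the Pippenger--Spencer nibble (\cref{thm:stronger-PS}) — not sparse regularity or K\L R — already produces $\Theta(n)$ edge-disjoint matchings each covering $n-o(n)$ vertices, and moreover with the leftovers ``well-spread'' over the vertices. The absorbers are then organised differently from your proposal: each $G_i$ lives on a large set $U_i$, the complement $W_i$ has size $\delta n$, and the paper reserves a \emph{dense} (relative to $|W_i|$) subgraph $F_i\subseteq S[W_i]$ together with a ``bridge'' graph $H_i$ between $U_i$ and $W_i$. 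Each almost-perfect matching from $G_i$ is first extended over $U_i$ using $H_i$, then completed inside $W_i$ using an absorbing structure built freshly in $F_i$; because $F_i$ has degree $\Theta(n)$ but sits on only $\Theta(\delta n)$ vertices, the robust absorber-resilience of goodness survives removing the edges used by the $\Theta(\delta^{5/2} n)$ earlier completions. This index-by-index coupling of a matching arena $U_i$ with a complementary absorber arena $W_i$, and the use of the nibble rather than regularity/transference for the main matchings, are the key ideas you would need to make the argument go through.
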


The proof of \cref{conj:random-good} mainly comes down to studying
the robust absorber-resilience property of goodness. (The almost regularity and pseudorandomness properties can be proved in a fairly straightforward fashion
using \cref{lem:triangle-removal-transfer,lem:bite-transfer-new}). For the robust absorber-resilience property, we will use \cref{lem:triangle-removal-transfer,lem:bite-transfer-new}
in combination with a hypergraph generalisation of the sparse regularity lemma (to be stated in \cref{sec:sparse regularity}) and a hypergraph generalisation of the resolution of the K\L R
conjecture by Conlon, Gowers, Samotij and Schacht (to be stated in \cref{sec:sparse regularity} as well). Roughly speaking,
this allows us to reduce certain problems about subgraphs of a random
hypergraph to problems about \emph{dense} hypergraphs, and the latter case is much more tractable to study. One complication is that even though robust absorber-resilience 
is a property of subgraphs of a random hypergraph, an absorber is a \emph{rooted} structure. We will need some additional tricks to reduce
the situation to one where the K\L R conjecture can actually be applied. The
details of the proof will be presented in \cref{sec:random-good}.

Concerning \cref{conj:pack-in-good}, one may wonder how the robust absorber-resilience
property of goodness could be strong enough to produce an approximate decomposition of ${\boldsymbol S}$ into perfect matchings. Na\"ively, it seems that after we have
found only $0.001\alpha n/2$ perfect matchings, this property no
longer gives us any guarantee for the existence of absorbers in the
remaining edges. Indeed, the proof of \cref{conj:pack-in-good}
would be much simpler if the robust absorber-resilience property of goodness could be strengthened
to guarantee that ${\boldsymbol S}$ is $D$-absorber-resilient for $D=o\left(n\right)$.
Unfortunately, it is not clear how to prove that such a strong property
holds in a typical {\bf STS}$(n)$, even with the K\L R conjecture in hand.
Instead, we employ a random partitioning trick inspired by the work of
Ferber, Kronenberg and Long~\cite{FKL17}. We partition the edges
of ${\boldsymbol S}$ into many subgraphs with different roles, some of which are
used to find almost-perfect matchings and some of which are used to
complete almost-perfect matchings into perfect matchings (using absorbers).
The latter subgraphs contain only a small fraction of the edges
of ${\boldsymbol S}$, but have comparatively high degree (this is possible because
each of these subgraphs have a very small number of vertices). We
then only need a weak absorber-resilience property for these smaller
subgraphs. The details of the proof will be presented in \cref{sec:pack-in-good}.

\section{\label{sec:sparse regularity}Sparse regularity and the K\L R conjecture
for hypergraphs}

Kohayakawa and R\"odl~\cite{KR03} proved a sparse version of the so-called regularity lemma
for graphs. In this paper, for the proofs of \cref{conj:random-good,conj:pack-in-good}
we will need a generalisation of the sparse regularity lemma to hypergraphs. Fortunately, while the general hypergraph regularity lemma is much more complicated to prove (and state) than the graph regularity lemma, for our purposes (embedding \emph{linear} hypergraphs) we only need a sparse version of the ``weak'' hypergraph regularity lemma (see for example \cite[Theorem~9]{KNRS10}). To state our sparse regularity lemma for hypergraphs we need a few
definitions.

\begin{defn}
Let $\varepsilon,\eta>0$ and $0\leq p\leq 1$ be arbitrary parameters. 
\begin{itemize} 
\item {\bf Density:} Consider disjoint vertex sets $X_1,\dots,X_r$ in an $r$-graph $G$. Let $e(X_1,\ldots,X_r)$ be the number of edges with a vertex in each $X_i$, and let $$d(X_1,\ldots,X_r)=\frac{e(X_1,\ldots,X_r)}{|X_1|\dots |X_r|}$$ be the \emph{density} between $X_1,\dots,X_r$.

\item {\bf Regular tuples:} An $r$-graph $G$ is said to be $r$-\emph{partite} if its vertex set $V(G)$ consists of a partition $V(H)=V_1\cup \ldots \cup V_r$ into $r$ \emph{parts} in such a way that each of its edges intersect each $V_i$ in exactly one vertex. An $r$-partite $r$-graph with parts $V_{1},\dots,V_{r}$ is \emph{$\left(\varepsilon,p\right)$-regular}
if, for every $V_{1}'\subseteq V_{1},\dots,V_{r}'\subseteq V_{r}$
with $\left|V_{i}'\right|\ge\varepsilon\left|V_{i}\right|$, the density
$d\left(V_{1}',\dots,V_{r}'\right)$ of edges between $V_{1}',\dots,V_{r}'$
satisfies
\[
\left|d\left(V_{1}',\dots,V_{r}'\right)-d\left(V_{1},\dots,V_{r}\right)\right|\le\varepsilon p.
\]

\item {\bf Regular partitions:} A partition of the vertex set of a $r$-graph into $t$ parts $V_{1},\dots,V_{t}$
is said to be $\left(\varepsilon,p\right)$-regular if it is
an equipartition, and for all but at most $\varepsilon t^{r}$ $r$-tuples
$\left(V_{i_{1}},\dots,V_{i_{r}}\right)$, the induced $r$-partite
$r$-graph between $V_{i_{1}},\dots,V_{i_{r}}$ is $\left(\varepsilon,p\right)$-regular.

\item {\bf Upper-uniformity:} An $r$-graph $G$ is \emph{$\left(\eta,p,D\right)$-upper-uniform} if for
any choice of disjoint subsets $U_{1},\dots,U_{r}$ with $\left|U_{1}\right|,\dots,\left|U_{r}\right|\ge\eta\left|V\left(G\right)\right|$,
we have $d\left(U_{1},\dots,U_{2}\right)\le Dp$.
\end{itemize}
\end{defn}

Note that upper-uniformity is a weaker property than upper-quasirandomness:
if a 3-graph $G$ is $\left(o\left(1\right),p\right)$-upper-quasirandom,
then it is $\left(o\left(1\right),p,1+o\left(1\right)\right)$-upper-uniform.
Now, our hypergraph regularity lemma is as follows. We omit its proof since it is straightforward to adapt a proof of the sparse graph regularity lemma (see \cite{KR03} for a sparse regularity lemma for graphs, and see \cite[Theorem~9]{KNRS10} for a weak regularity lemma for dense hypergraphs).
\begin{lem}
\label{lem:sparse-regularity}For every $\varepsilon,D>0$ and every
positive integer $t_{0}$, there exist $\eta>0$ and $T\in{\mathbb N}$ such
that for every $p\in\left[0,1\right]$, every $\left(\eta,p,D\right)$-upper-uniform
$r$-graph $G$ with at least $t_{0}$ vertices admits an $\left(\varepsilon,p\right)$-regular
partition $V_{1},\dots,V_{t}$ of its vertex set into $t_{0}\le t\le T$
parts.
\end{lem}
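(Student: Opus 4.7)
The plan is to combine two well-known adaptations of Szemerédi's regularity lemma: the dense \emph{weak} regularity lemma for $r$-graphs \cite[Theorem~9]{KNRS10} and the sparse regularity lemma for graphs \cite{KR03}. Both are proved by the same index-increment scheme, and the two modifications required here (working with $r$-tuples of parts rather than pairs, and rescaling densities by $1/p$) are essentially independent, so one can combine them in a mechanical way.

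Concretely, I would define the (scaled) index of an equipartition $\mathcal{P} = (V_1, \dots, V_t)$ of $V(G)$ by
\[
\mathrm{ind}(\mathcal{P}) = \sum_{1 \le i_1 < \cdots < i_r \le t} \frac{|V_{i_1}| \cdots |V_{i_r}|}{n^r} \left( \frac{d(V_{i_1},\dots,V_{i_r})}{p} \right)^{\!2}.
\]
The upper-uniformity hypothesis gives $d(V_{i_1},\dots,V_{i_r})/p \le D$ whenever $|V_{i_j}| \ge \eta n$ for all $j$, so provided $\eta \le 1/T$ the scaled index never exceeds $D^2/r!$ throughout the iteration. This bound replaces the trivial estimate $d \le 1$ that is available in the dense case.

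The key step is the refinement lemma. If $\mathcal{P}$ is not $(\varepsilon, p)$-regular, then at least $\varepsilon t^r$ ordered $r$-tuples of parts are irregular, each witnessed by subsets $V'_{i_j} \subseteq V_{i_j}$ whose density deviates from $d(V_{i_1},\dots,V_{i_r})$ by more than $\varepsilon p$. Common-refining each $V_i$ by all witnesses that involve it yields a partition $\mathcal{P}'$ with at most $t \cdot 2^{t^{r-1}}$ parts, and an $(r-1)$-fold defect-form Cauchy--Schwarz (one application per coordinate of the $r$-tuple, exactly as in the proof of \cite[Theorem~9]{KNRS10} but with every density replaced by $d/p$) shows that each irregular tuple contributes at least $\Omega(\varepsilon^2)$ more to the scaled index after refinement. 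Summing over the irregular tuples gives $\mathrm{ind}(\mathcal{P}') \ge \mathrm{ind}(\mathcal{P}) + \delta$ for some $\delta = \delta(\varepsilon, r) > 0$.

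Finally, I would iterate this step starting from any equipartition into $t_0$ parts. Since the scaled index lies in $[0, D^2/r!]$ and rises by at least $\delta$ at each iteration, the process terminates within $\lceil D^2/(r!\,\delta) \rceil$ steps, bounding the final number of parts by some tower-type function $T = T(\varepsilon, D, t_0)$. The main subtlety --- and essentially the only point where the sparse setting really bites --- is the circular dependence between $T$ and $\eta$: one must first choose $T$ as a function of $\varepsilon$, $D$, and $t_0$ (and hence of $\delta$), and only then set $\eta = 1/T$, so that the hypothesis $|V_i| \ge \eta n$ needed to invoke upper-uniformity is automatic for every part produced at every stage of the refinement.
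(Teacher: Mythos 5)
The paper does not actually supply a proof of \cref{lem:sparse-regularity}: it explicitly omits it, stating only that it is a straightforward adaptation of the sparse graph regularity lemma of Kohayakawa--R\"odl~\cite{KR03} combined with the weak hypergraph regularity lemma of~\cite[Theorem~9]{KNRS10}. Your proposal is precisely a sketch of that adaptation, and its structure---$p$-scaled mean-square density, upper-uniformity to bound the index by $D^2/r!$ throughout the iteration, a defect Cauchy--Schwarz refinement step, termination after boundedly many steps, and the observation that $\eta$ must be chosen only after $T$ so that every part produced is large enough to invoke upper-uniformity---is exactly the intended argument.

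Two points you gloss over are worth naming. First, the per-tuple increment you quote is off on arithmetic: an irregular tuple has witness sets of size only an $\varepsilon$-fraction of each part, so after weighting by $|V_{i_1}|\cdots|V_{i_r}|/n^r \approx t^{-r}$, the increment per tuple is $\Omega(\varepsilon^{r+2}/t^r)$, and summing over $\ge \varepsilon t^r$ irregular tuples yields $\delta = \Omega(\varepsilon^{r+3})$, not ``$\Omega(\varepsilon^2)$ per tuple.'' Since you correctly conclude that $\delta$ depends only on $\varepsilon$ and $r$, this is a slip rather than a gap. Second, after taking the common refinement into at most $t\cdot 2^{t^{r-1}}$ atoms you must re-equipartition, and upper-uniformity may only be invoked on parts of size $\ge \eta n$; the standard way to make this rigorous (chop each atom into pieces of size exactly $\lfloor n/t'\rfloor$, collect the leftovers into an exceptional set, and absorb them) ensures the index is only ever evaluated on equitable partitions with $t' \le T$ parts, which is precisely where the bound $\eta = 1/T$ does its work. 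Neither point changes the structure of your argument; it is the proof the paper has in mind.
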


We will use \cref{lem:sparse-regularity} for several different purposes.
Crucial to all of these is the notion of a \emph{cluster hypergraph}
which is a dense hypergraph which encodes the large-scale structure
of a regular partition. From now on we return to considering only
$3$-graphs.
\begin{defn}
Given an $\left(\varepsilon,p\right)$-regular partition $V_{1},\dots,V_{t}$
of the vertex set of a 3-graph $G$, the \emph{cluster hypergraph} is the 3-graph whose vertices are the clusters $V_{1},\dots,V_{t}$,
with an edge $\left\{ V_{i},V_{j},V_{k}\right\} $ if $d\left(V_{i},V_{j},V_{k}\right)>2\varepsilon p$
and the induced tripartite 3-graph between $V_{i}$, $V_{j}$ and
$V_{k}$ is $\left(\varepsilon,p\right)$-regular.
\end{defn}

If the regularity lemma is applied with small $\varepsilon$ and large
$t_{0}$, the cluster hypergraph approximately inherits certain density
properties from the original graph $G$, as follows.
\begin{lem}
\label{lem:transfer-cluster-graph}Fix sufficiently small $\varepsilon>0$
and sufficiently large $t_{0}\in{\mathbb N}$, and let $G$ be an $n$-vertex
$\left(p,o\left(1\right)\right)$-upper-quasirandom 3-graph. Let $G'\subseteq G$
be a spanning subgraph with minimum degree at least $\delta p\binom{n}{2}$.
Let $\mathcal{R}$ be the $t$-vertex cluster 3-graph obtained by
applying the sparse regularity lemma to $G'$ with parameters $t_{0}$, $p$
and $\varepsilon$. Then all but $\sqrt{\varepsilon}t$ vertices of
$\mathcal{R}$ have degrees at least $\left(\delta-3\sqrt{\varepsilon}-3/t_{0}\right)\binom{t}{2}$.
\end{lem}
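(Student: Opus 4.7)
The plan is to fix a cluster $V_i$ and argue that for most $V_i$, few pairs of distinct other clusters $\{V_j, V_k\}$ are ``bad'' (meaning either the tripartite restriction to $V_i, V_j, V_k$ is irregular, or it is regular with density at most $2\varepsilon p$). The strategy will be to first lower-bound the sum $\sum_{\{V_j, V_k\}} d(V_i, V_j, V_k)$ using the minimum degree hypothesis on $G'$, and then compare it against a pair-by-pair upper bound.

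To produce the density-sum lower bound, I start from $\sum_{v \in V_i} \deg_{G'}(v) \geq |V_i| \cdot \delta p \binom{n}{2} \approx \delta p n^3/(2t)$. Since this sum equals $\sum_{e \in E(G')} |e \cap V_i|$, I only need to discard the contributions from edges having at least two vertices in $V_i$, and from edges whose two vertices outside $V_i$ lie in a single other cluster $V_j$. Both are controlled by $(p,o(1))$-upper-quasirandomness of $G \supseteq G'$ applied to the tuples $(V_i, V_i, V(G))$ and $(V_i, V_j, V_j)$ (the latter summed over $j$). After subtracting and dividing by $(n/t)^3$ to convert edge counts into densities, this yields
\[
\sum_{\{V_j, V_k\}} d(V_i, V_j, V_k) \geq p \binom{t}{2}\bigl( \delta - O(1/t_0) - o(1) \bigr),
\]
where the $o(1)$ absorbs errors of order $t^2\beta$; these vanish since the upper-quasirandomness constant $\beta$ is $o(1)$ as $n\to\infty$, while $t \leq T$ is bounded.

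Next, I split the pairs $\{V_j, V_k\}$ into three classes with counts $A_i$ (irregular), $B_i$ (regular with density $\leq 2\varepsilon p$), and $\deg_{\mathcal R}(V_i)$ (regular with density $>2\varepsilon p$). Upper-quasirandomness applied to three distinct parts also gives $d(V_i, V_j, V_k) \leq (1+o(1))p$ for every pair, so the density sum is bounded above by $(A_i + \deg_{\mathcal R}(V_i))(1+o(1))p + 2\varepsilon p\, B_i$. Combining with the lower bound above, using $B_i \leq \binom{t}{2}$, and rearranging yields
\[
\deg_{\mathcal R}(V_i) \geq \binom{t}{2}\bigl( \delta - 2\varepsilon - O(1/t_0) - o(1) \bigr) - A_i.
\]
Finally, by definition of an $(\varepsilon,p)$-regular partition we have $\sum_i A_i \leq 3\varepsilon t^3$, so by Markov's inequality all but at most $\sqrt{\varepsilon}\,t$ clusters $V_i$ satisfy $A_i = O(\sqrt{\varepsilon})\binom{t}{2}$; for these clusters the bound $\deg_{\mathcal R}(V_i) \geq (\delta - 3\sqrt{\varepsilon} - 3/t_0)\binom{t}{2}$ drops out after absorbing all small error terms. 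The main obstacle is purely bookkeeping: verifying that the $1/t$, $2\varepsilon$, $\sqrt{\varepsilon}$, and $\beta$-error contributions all fit cleanly inside the stated $3\sqrt{\varepsilon} + 3/t_0$ slack, which is possible because $\varepsilon$ is assumed small, $t_0$ large, and $\beta t^3 = o(1)$.
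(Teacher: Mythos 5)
Your proof is correct and takes essentially the same approach as the paper: compare a min-degree lower bound on edges incident to a cluster against an upper bound obtained by separating regular high-density, regular low-density, and irregular triples (the latter controlled cluster-by-cluster via a Markov/pigeonhole step), with $(p,o(1))$-upper-quasirandomness absorbing both the ``degenerate'' edges and the irregular-triple contribution. The paper actually proves this as a special case of \cref{lem:transfer-cluster-graph-partition} (trivial partition, threshold $2\varepsilon$), working with raw edge counts rather than density sums, but the calculation is the same; your bound $\sum_i A_i \le 3\varepsilon t^3$ is a constant factor looser than the $3\varepsilon\binom{t}{3}$ the paper uses, so to land exactly on the advertised $3\sqrt\varepsilon + 3/t_0$ slack you should take the unordered count of irregular triples, but this is pure bookkeeping.
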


The proof of \cref{lem:transfer-cluster-graph} is by a standard counting argument. It is a special case of \cref{lem:transfer-cluster-graph-partition}, to appear in the next subsection (so we defer the proof until then).

An immediate application of \cref{lem:transfer-cluster-graph} is the
fact that high-degree subgraphs of upper-quasirandom 3-graphs have
a ``rich'' vertex subset $Z$ such that most vertices outside $Z$
have reasonably high degree into $Z$. This will be important for
the proof of \cref{conj:pack-in-good}.

\begin{lem}
\label{lem:rich-set}For any $\delta>0$ and any $\varepsilon>0$
that is sufficiently small relative to $\delta$, there is $\xi>0$
such that the following holds. Let $G$ be an $\left(p,o\left(1\right)\right)$-upper-quasirandom
3-graph. Let $G'\subseteq G$ be a spanning subgraph with minimum
degree $\delta p\binom{n}{2}$. Then there is a $2\varepsilon n$-vertex
subset $U\subseteq V\left(G\right)$ such that the following conditions
hold.
\begin{enumerate}
\item For all but at most $2\sqrt{\varepsilon}n$ vertices $v$, there
are at least $\xi\binom{n}{2}p$ edges of $G'$ containing $v$
and two vertices of $U$;
\item every subset of $\left|U\right|-\xi n$ vertices of $U$ induces at least one edge of $G'$. \end{enumerate}
\end{lem}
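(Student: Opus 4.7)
The plan is to invoke the sparse regularity lemma (\cref{lem:sparse-regularity}) and choose $U$ as a random union of clusters. Fix $\varepsilon'>0$ very small compared to $\varepsilon$ and $\delta$, and $t_0$ very large compared to $1/(\delta^2\varepsilon)$; apply \cref{lem:sparse-regularity} to $G'$ with regularity parameter $\varepsilon'$ and minimum cluster count $t_0$, producing a partition $V_1,\dots,V_t$ and cluster hypergraph $\mathcal{R}$. By \cref{lem:transfer-cluster-graph}, all but at most $\sqrt{\varepsilon'}\,t$ clusters have degree at least $(\delta-o(1))\binom{t}{2}$ in $\mathcal{R}$; call these \emph{good}. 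A standard regularity consequence is that for each regular triple $\{V_a,V_b,V_c\}\in E(\mathcal{R})$, at most $\varepsilon'|V_c|$ vertices of $V_c$ have fewer than $\varepsilon' p|V_a||V_b|$ edges of $G'$ into $V_a\times V_b$; a double-count then shows that outside an exceptional set of at most $\sqrt{\varepsilon'}\,n$ vertices, every vertex $v$ is \emph{good}, meaning its cluster $V_c$ is good and $v$ fails this link-density condition for at most $\sqrt{\varepsilon'}\binom{t}{2}$ regular triples $\{V_c,V_a,V_b\}$.

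Take $U=\bigcup_{i\in\mathcal{U}}V_i$, where $\mathcal{U}\subseteq[t]$ is sampled by including each cluster independently with probability $2\varepsilon$ (with a small final rebalancing so that $|U|=2\varepsilon n$ exactly). For Condition~1, fix a good vertex $v$ in a good cluster $V_c$ and let $Y(v)$ count the regular triples $\{V_c,V_a,V_b\}\in E(\mathcal{R})$ for which $v$ has the link-density property and both $V_a,V_b\in\mathcal{U}$; each such triple contributes at least $\varepsilon' p(n/t)^2$ distinct edges to $\deg_U(v)$. A short calculation gives $\E Y(v)=\Omega(\delta\varepsilon^2 t^2)$, with variance $O(\varepsilon^3 t^3)$ since only pairs of good pairs sharing a common cluster contribute a non-zero covariance. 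Chebyshev then yields $\Pr[Y(v)\le\E Y(v)/2]=O(1/(\delta^2\varepsilon t))\le\sqrt{\varepsilon}$ for $t_0$ large enough, and linearity of expectation together with Markov delivers a $\mathcal{U}$ for which at most $2\sqrt{\varepsilon}\,n$ vertices have $\deg_U(v)<\xi p\binom{n}{2}$, with $\xi=\Theta(\delta\varepsilon^2\varepsilon')$.

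For Condition~2, I will prove the stronger fact that $G'[U]$ contains a matching of size greater than $\xi n$, which automatically rules out any vertex cover of size $\xi n$. Expected edge count $\Omega(\delta\varepsilon^3 t^3)$ and maximum degree $O(\varepsilon^2 t^2)$ in $\mathcal{R}[\mathcal{U}]$ (with McDiarmid concentration on the $t$ independent cluster indicators) produce a matching of size $\Omega(\delta\varepsilon t)$ in $\mathcal{R}[\mathcal{U}]$ via the greedy bound $\nu\ge e/(3\Delta)$. For each of the resulting cluster-disjoint regular triples $\{V_a,V_b,V_c\}\subseteq\mathcal{U}$, I extract a matching of size at least $(1-\varepsilon')(n/t)$ in the tripartite subgraph by greedy selection: while fewer than $(1-\varepsilon')(n/t)$ edges have been chosen, the three residual parts still have size $\ge\varepsilon'(n/t)$, so by $(\varepsilon',p)$-regularity the residual density is $\ge d(V_a,V_b,V_c)-\varepsilon' p\ge\varepsilon' p$, which for $n$ large guarantees an extending edge. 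Cluster-disjointness makes these matchings mutually vertex-disjoint in $G'$, and they combine to a matching of size $\Omega(\delta\varepsilon n)\gg\xi n$ inside $G'[U]$.

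The main delicacy is the simultaneous calibration of the auxiliary parameters: $\varepsilon'$ must be small enough (in terms of $\varepsilon,\delta$) for both the matching-via-regularity step and the good-vertex averaging to work, while $t_0$ must be large enough (in terms of $\varepsilon,\delta$) for the Chebyshev bound in Condition~1 to beat the permitted $2\sqrt{\varepsilon}\,n$ exceptional set. Both remain constants once $\varepsilon,\delta$ are fixed, so $\xi=\Theta(\delta\varepsilon^2\varepsilon')$ is a positive constant depending only on $\delta$ and $\varepsilon$. A final union bound over the failure events for the two conditions yields a single $\mathcal{U}$ witnessing both conclusions.
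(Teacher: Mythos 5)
Your proof is correct but follows a genuinely different (and heavier) route than the paper's. Both arguments begin the same way, applying the sparse hypergraph regularity lemma together with \cref{lem:transfer-cluster-graph} to get a cluster $3$-graph $\mathcal{R}$ in which almost every cluster has large degree. The divergence is in how $U$ is chosen and how the two properties are verified. The paper fixes $U$ \emph{deterministically} as a $2\varepsilon$-fraction of \emph{every} cluster; you instead choose $U$ \emph{randomly} as a union of whole clusters (each included with probability $2\varepsilon$). For condition~1, the paper applies the definition of $(\varepsilon,p)$-regularity once to a single regular triple containing $v$'s cluster, giving the required degree bound in a few lines, while you run a second-moment (Chebyshev) argument over the random cluster selection to control the number of vertices that fail. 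For condition~2, the paper's argument is one sentence: after deleting $\xi n$ vertices each cluster still retains an $\varepsilon$-fraction, so any single regular triple supplies an edge; you instead prove the strictly stronger claim that $G'[U]$ contains a matching of size $\Omega(\delta\varepsilon n)$, built by a greedy matching in $\mathcal{R}[\mathcal{U}]$ followed by a regularity-based matching inside each of the resulting cluster-disjoint triples, and observe that a large matching is automatically resilient to deleting $\xi n$ vertices. Your route works, but pays a price in complexity that the deterministic fractional choice of $U$ avoids; in fact, since your whole-cluster selection leaves each included cluster with $n/t$ vertices in $U$ (far more than $2\varepsilon n/t$), condition~2 would follow in your framework from just one nonempty regular triple in $\mathcal{R}[\mathcal{U}]$ plus the same ``still an $\varepsilon'$-fraction survives'' observation, without constructing a matching at all.
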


\begin{proof}
Apply the sparse regularity lemma (\cref{lem:sparse-regularity}) to $G'$ with
small $\varepsilon$ and large $t_{0}$, and let $U$ contain a $2\varepsilon$-fraction
of each cluster. Let $\mathcal{R}$ be the cluster 3-graph (with $t\le T$
vertices).

By \cref{lem:transfer-cluster-graph}, if $\varepsilon$ is small enough
and $t_{0}$ is large enough, all but $\sqrt{\varepsilon}t$ clusters
$V_{i}$ have degree at least $\left(\delta/2\right)\binom{t}{2}$
in the cluster graph, and in particular participate in an $\left(\varepsilon,p\right)$-regular
triple $V_{i},V_{j},V_{q}$ with $d\left(V_{i},V_{j},V_{q}\right)>2\varepsilon p$ (recall that we are assuming that $\varepsilon$ is small relative to $\delta$).

Now, take $\xi=\varepsilon\left(1/T^{2}\right)$. To verify the first part of \cref{lem:rich-set}, we observe that for any cluster $V_i$ as above (participating in an $\left(\varepsilon,p\right)$-regular
triple $V_{i},V_{j},V_{q}$ with $d\left(V_{i},V_{j},V_{q}\right)>2\varepsilon p$), at least a $(1-\varepsilon)$-fraction of the vertices $v\in V_i$ satisfy the condition in the first part of \cref{lem:rich-set}.  Indeed, let $V_{j}'=V_{j}\cap U$ and $V_{q}'=V_{q}\cap U$, and let $V_{i}'$
be the set of vertices $v\in V_{i}$ for which there are fewer than
$\varepsilon\left|V_{j}'\right|\left|V_{j}'\right|p$ edges containing
$v$, a vertex in $V_{j}'$ and a vertex in $V_{j}'$. Then, $d\left(V_{i}',V_{j}',V_{q}'\right)<\varepsilon p$,
and if we had $\left|V_{i}'\right|\ge\varepsilon\left|V_{i}\right|$
this would contradict $\left(\varepsilon,p\right)$-regularity. So, $\left|V_{i}'\right|<\varepsilon\left|V_{i}\right|$. Then, observe that $\varepsilon\left|V_{j}'\right|\left|V_{j}'\right|p\ge \xi \binom{n}{2}p$.

For the second property, if we delete fewer than $\xi n$ vertices
from $U$ then we still have an $\varepsilon$-fraction of each cluster
$V_{i}$ and therefore have at least one edge.
\end{proof}

\subsection{Refining an existing partition}

In our proof of \cref{conj:random-good} we will apply the sparse regularity
lemma to a 3-graph whose vertices are already partitioned into a few
different parts with different roles. It will be important that the
regular partition in \cref{lem:sparse-regularity} can be chosen to
be consistent with this existing partition.
\begin{lem}
\label{lem:respect-partition}Suppose that a 3-graph $G$ has its
vertices partitioned into sets $U_{1},\dots,U_{h}$. In the $\left(\varepsilon,p\right)$-regular
partition guaranteed by \cref{lem:sparse-regularity}, we can assume
that all but at most $\varepsilon ht$ of the clusters $V_{i}$ are
contained in some $U_{j}$.
\end{lem}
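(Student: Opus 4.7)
The plan is to over-refine via \cref{lem:sparse-regularity} and then adjust the resulting equipartition to respect the given partition $U_1,\dots,U_h$. First I would invoke \cref{lem:sparse-regularity} with a much smaller regularity parameter $\varepsilon_0=\Theta(\varepsilon^2/h)$ and a much larger initial cluster count, obtaining an $(\varepsilon_0,p)$-regular equipartition $W_1,\dots,W_{t'}$.

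Next I would re-chunk this partition to align with the $U_j$'s. Setting $\varepsilon_1:=\varepsilon/(6h)$, $t:=\lceil t'/\varepsilon_1\rceil$ and $s:=\lfloor n/t\rfloor$, for each $(i,j)\in[t']\times[h]$ I would partition $A_{i,j}:=W_i\cap U_j$ into as many chunks of size $s$ as possible, leaving a leftover of size $<s$. The full-size chunks (call them \emph{clean} clusters) are each contained in some $U_j$. All leftover vertices---fewer than $ht's$ in total---are gathered into \emph{mixed} clusters of size $s$, of which there are at most $ht'=h\varepsilon_1 t=\varepsilon t/6\leq\varepsilon ht$. This yields an equipartition into $t$ parts of which at most $\varepsilon ht$ are mixed, as required.

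To verify $(\varepsilon,p)$-regularity, observe that each clean cluster occupies an $\varepsilon_1$-fraction of its parent $W_i$. For a triple of clean clusters $(V_a,V_b,V_c)$ coming from an originally $(\varepsilon_0,p)$-regular triple $(W_{i_1},W_{i_2},W_{i_3})$, any sub-triple of relative size $\varepsilon$ inside $(V_a,V_b,V_c)$ corresponds to a sub-triple of relative size $\varepsilon\varepsilon_1\geq\varepsilon_0$ inside $(W_{i_1},W_{i_2},W_{i_3})$, where the original regularity applies; the triangle inequality then gives $(\varepsilon,p)$-regularity of $(V_a,V_b,V_c)$. The ``bad'' triples in the refined partition come from only three sources: (a) triples involving at least one mixed cluster, contributing $\leq3ht'\cdot t^2=\varepsilon t^3/2$; (b) triples inherited from originally irregular $W$-triples, contributing $\leq\varepsilon_0 t^3$; and (c) ``diagonal'' triples with two clusters in the same $W_i$, contributing $O(t^2/\varepsilon_1)$. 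With $\varepsilon_0$ small enough and the initial part count large enough, the total is $\leq\varepsilon t^3$.

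The main obstacle---and the reason for the apparently odd ``$\varepsilon ht$'' bound in the statement rather than a plain ``$\varepsilon t$''---is the tension between the mixed-cluster budget and the $\varepsilon t^3$ bad-triple allowance: mixed-cluster triples inherently contribute $\Theta(h t' t^2)$ to the bad count, forcing the initial $t'$ (and hence $\varepsilon_0$) to be taken smaller than the naive guess by a factor depending on $h$. Threading the chain of parameters $\varepsilon\to\varepsilon_1\to\varepsilon_0$ so that all three constraints (mixed-cluster count, inheritance of regularity, and total bad-triple count) hold simultaneously is the sole technical point; everything else is bookkeeping.
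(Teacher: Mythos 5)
Your argument is correct, but it takes a somewhat different route from the paper's and ends up being more conservative than necessary. The paper's refinement keeps everything \emph{inside} its parent cluster: within each cluster $W_i$ it orders vertices by which $U_j$ they belong to and then cuts $W_i$ into $\lfloor 1/\varepsilon\rfloor$ consecutive intervals. Every interval---including the at most $h-1$ ``boundary'' intervals that straddle two $U_j$'s---is a subset of $W_i$ of relative size about $\varepsilon$, so it inherits regularity from the original partition in exactly the way you describe for your clean chunks. The mixed intervals therefore do \emph{not} need to be charged to the bad-triple budget; the only bad triples are those inherited from originally irregular triples ($O(\varepsilon^2 t^3)$ of them, from an initial regularity parameter of $\varepsilon^2/2$) and the ``diagonal'' ones with two sub-clusters in the same $W_i$ (which is $o(t^3)$ once $t_0$ is large). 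Consequently the initial regularity parameter can be taken as $\varepsilon^2/2$, with no dependence on $h$; the $\varepsilon ht$ in the statement then arises from the trivial count ``at most $h$ boundary intervals per original cluster, times roughly $\varepsilon t$ original clusters,'' not from any tension with the bad-triple allowance.

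Your version instead pools leftovers from different $W_i\cap U_j$ into fresh ``mixed'' clusters. Those mixed clusters can contain vertices from several different parent clusters $W_i$, so they genuinely cannot be assumed regular, and you are right to charge every triple touching them as bad. This forces the $h$-dependence in your $\varepsilon_0=\Theta(\varepsilon^2/h)$ and the tighter budget $\varepsilon_1=\varepsilon/(6h)$. The parameter chase you carry out is consistent and the conclusion goes through, so the proof is valid. But the closing diagnosis---that the $\varepsilon ht$ bound is forced by a tension between the mixed-cluster budget and the bad-triple allowance---is an artifact of this design choice rather than an intrinsic feature of the lemma; in the paper's interval-splitting construction that tension does not arise, and the $\varepsilon ht$ bound comes purely from counting boundary intervals.
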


For the reader who is familiar with the proof of the regularity lemma, the proof of \cref{lem:respect-partition} is straightforward. Indeed, in order to prove the regularity lemma, one starts with an arbitrary partition and keeps refining it in a clever way. Nevertheless, for completeness we include a short reduction from \cref{lem:sparse-regularity}.

\begin{proof}[Proof of \cref{lem:respect-partition}]
Apply \cref{lem:sparse-regularity} with regularity parameter $\varepsilon^{2}/2$.
For each cluster $V_{i}$, order the vertices in $V_{i}$ according
to the partition $U_{1},\dots,U_{h}$ (first the vertices from $U_{1}$,
then from $U_{2}$, etc). Now, equipartition $V_{i}$ into $\floor{1/\varepsilon}$
intervals $V_{i}^{1},\dots,V_{i}^{\floor{1/\varepsilon}}$ with respect
to this ordering. At most $h$ of these intervals intersect multiple
$U_{j}$, and the $V_{i}^{q}$ form an $\left(\varepsilon,p\right)$-regular
partition. (Strictly speaking some of the clusters may now have sizes
differing by 2 instead of 1, but we can move some vertices between
clusters to correct this without having any material effect on the
regularity of the partition).
\end{proof}

We also need a more technical version of \cref{lem:transfer-cluster-graph}
translating the degrees between the $U_{i}$ into degrees in the cluster
graph. First, we generalise the definition of a cluster graph.
\begin{defn}
Given a 3-graph $G$ with vertex partition $U_{1},\dots,U_{h}$ and
an $\left(\varepsilon,p\right)$-regular partition $V_{1},\dots,V_{t}$
of its vertices, the \emph{partitioned} cluster graph $\mathcal{R}$
with \emph{threshold }$\tau$ is the 3-graph defined as follows. The
vertices of $\mathcal{R}$ are the clusters $V_{i}$ which are completely
contained in some $U_{j}$, with an edge $\left\{ V_{i},V_{j},V_{k}\right\} $
if $d\left(V_{i},V_{j},V_{k}\right)>\tau p$ and the induced tripartite
3-graph between $V_{i}$, $V_{j}$ and $V_{k}$ is $\left(\varepsilon,p\right)$-regular.
\end{defn}

\begin{lem}
\label{lem:transfer-cluster-graph-partition}Fix sufficiently small
$\varepsilon>0$ and sufficiently large $t_{0}\in{\mathbb N}$, and let $G$
be an $n$-vertex $\left(p,o\left(1\right)\right)$-upper-quasirandom
3-graph with a partition $U_{1},\dots,U_{h}$ of its vertices into
parts of sizes $u_{1},\dots,u_{h}$, respectively. Let $G'\subseteq G$ be a spanning
subgraph such that each $v\in U_{i}$ has $\deg_{U_{j}}\left(v\right)\ge\delta_{ij}p\binom{u_{j}}{2}$ (where the degree here is with respect to $G'$).

Let $\mathcal{R}$ be the partitioned cluster 3-graph with threshold
$\tau$ obtained by applying \cref{lem:sparse-regularity,lem:respect-partition}
to $G'$ with parameters $t_{0}$, $p$ and $\varepsilon$. Let $t_{i}$ be
the number of clusters contained in $U_{i}$ and let $\mathcal{U}_{i}$
be the set of clusters contained in $U_{i}$. Then each $t_{i}\ge\left(u_{i}/n\right)t-\varepsilon ht$
and for each $i,j$, all but $\sqrt{\varepsilon}t$ clusters $X\in\mathcal{U}_{i}$
have $\deg_{\mathcal{U}_{j}}\left(X\right)\ge\delta_{ij}\binom{t_{i}}{2}-t^{2}\left(\tau+2\varepsilon h+\sqrt{\varepsilon}+2/t_{0}\right)$
in the cluster graph $\mathcal{R}$.
\end{lem}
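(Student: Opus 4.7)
The first part of the lemma, $t_i \geq (u_i/n)t - \varepsilon h t$, is immediate from \cref{lem:respect-partition}: since the partition is an equipartition into clusters of size $n/t$, the ``mixed'' clusters not contained in any $U_j$ cover at most $\varepsilon h t \cdot (n/t) = \varepsilon h n$ vertices in total, so the clusters entirely inside $U_i$ cover at least $u_i - \varepsilon h n$ vertices of $U_i$, which yields $t_i \geq (u_i - \varepsilon h n)/(n/t) = (u_i/n)t - \varepsilon h t$.

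For the degree bound, fix indices $i, j$ and a cluster $X \in \mathcal{U}_i$. The plan is the standard cluster-graph counting argument in the sparse setting. I lower bound the number of edges of $G'$ having one vertex in $X$ and two in $U_j$ using the degree hypothesis as
\[
\sum_{v \in X} \deg_{U_j}(v) \geq |X| \delta_{ij} p \binom{u_j}{2},
\]
and then classify each such edge by the location of its two endpoints outside $X$: (a) both inside a single cluster of $\mathcal{U}_j$; (b) at least one inside the residual set $W_j := U_j \setminus \bigcup \mathcal{U}_j$ (of size at most $\varepsilon h n$); (c) inside distinct clusters $Y, Z \in \mathcal{U}_j$ with $\{X, Y, Z\}$ a non-edge of $\mathcal{R}$; or (d) inside distinct clusters $Y, Z \in \mathcal{U}_j$ with $\{X, Y, Z\}$ an edge of $\mathcal{R}$. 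Each edge in case (d) belongs to one of $\deg_{\mathcal{U}_j}(X)$ triples of clusters, and by $(p,o(1))$-upper-quasirandomness each such triple contributes at most $(1+o(1))p(n/t)^3$ edges. A matching upper bound on cases (a)--(c) will therefore yield the desired lower bound on $\deg_{\mathcal{U}_j}(X)$.

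The bounds on (a)--(c) all use $(p,o(1))$-upper-quasirandomness of $G$. Case (a) uses $e_G(X,Y,Y) \leq p|X||Y|^2 + o(n^3 p)$ summed over $Y \in \mathcal{U}_j$, contributing $O(1/t_0)$ to the error after dividing by $p(n/t)^3$. Case (b) uses $e_G(X,W_j,U_j) \leq p|X||W_j||U_j| + o(n^3 p)$, contributing $O(\varepsilon h t^2)$. In case (c), the non-edges of $\mathcal{R}$ split into irregular triples $(X,Y,Z)$ and regular triples of density $\leq \tau p$; the latter contribute at most $\tau p (n/t)^3$ edges each across at most $\binom{t_j}{2} \leq t^2/2$ triples, giving an error of order $\tau t^2$; for the former, the key input is that the regular partition has at most $\varepsilon t^3$ irregular triples overall, so by averaging all but $\sqrt{\varepsilon} t$ clusters $X$ participate in at most $\sqrt{\varepsilon} t^2$ irregular triples (this is the source of the $\sqrt{\varepsilon} t$ exceptional clusters in the conclusion), each contributing $\leq (1+o(1))p(n/t)^3$ edges, for a total error of order $\sqrt{\varepsilon} t^2$. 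Assembling these estimates and using $u_j \geq t_j (n/t)$ to convert $\binom{u_j}{2}$ into $\binom{t_j}{2}$ (with the discrepancy absorbed into the $2/t_0$ slack via $t_j \leq t \leq t^2/t_0$) yields the stated lower bound. The main technical obstacle is purely bookkeeping: each error term must be carefully divided by $(1+o(1))p(n/t)^3$, and the $o(1)$ factors from upper-quasirandomness, multiplied by $t^3 \leq T^3$ after this division, must be shown (for $n$ large) to fit inside the stated error budget, which is why the hypothesis is stated as $(p,o(1))$-upper-quasirandomness rather than a fixed-$\beta$ variant.
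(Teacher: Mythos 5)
Your proposal is correct and follows essentially the same route as the paper's proof: both establish the $t_i$ bound by coverage, both classify the edges incident to a cluster $X$ by the location of the other two endpoints (irregular triples, low-density regular triples, residual/mixed-cluster vertices, and within-cluster pairs), both use $(p,o(1))$-upper-quasirandomness of $G$ to bound these error contributions, and both identify the exceptional $\sqrt{\varepsilon}t$ clusters by averaging over the at most $\varepsilon t^3$ irregular triples. One small note: your bookkeeping naturally produces $\delta_{ij}\binom{t_j}{2}$ as the main term, and so does the paper's own proof; the $\binom{t_i}{2}$ appearing in the lemma statement is a typo (in all applications $t_i$ and $t_j$ are comparable, so nothing downstream is affected).
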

Note that \cref{lem:transfer-cluster-graph} is actually a special
case of \cref{lem:transfer-cluster-graph-partition} (taking the trivial
partition and threshold $2\varepsilon$).
\begin{proof}[Proof of \cref{lem:transfer-cluster-graph-partition}]
The clusters in $\mathcal{U}_{i}$ comprise at most $t_{i}\left(n/t\right)$
vertices, so $\left|U_{i}\right|=u_{i}\le t_{i}\left(n/t\right)+\varepsilon h t \left(n/t\right)$.
It follows that $t_{i}\ge\left(u_{i}/n\right)t-\varepsilon ht$ as
claimed.

Let $\mathcal{W}$ be the set of all clusters $W$ in the $\left(\varepsilon,p\right)$-regular
partition for which there are more than $\sqrt{\varepsilon}\binom{t}{2}$
non-$\left(\varepsilon,p\right)$-regular triples $\left(V_{i},V_{j},W\right)$
containing $W$. There can be at most $3\varepsilon\binom{t}{3}/\left(\sqrt{\varepsilon}\binom{t}{2}\right)\le\sqrt{\varepsilon}t$
clusters in $\mathcal{W}$.
Let $Z$ be the set of at most $\varepsilon hn$
vertices whose cluster does not appear in the cluster 3-graph (because
the cluster was not completely contained in any $U_{i}$).

If a cluster $X\in\mathcal{U}_{i}\backslash\mathcal{W}$ has $\deg_{\mathcal{U}_{j}}\left(X\right)=d$
in the cluster 3-graph, then by $\left(p,o\left(1\right)\right)$-upper-quasirandomness
the number of edges of $G'$ with a vertex in $X$ and two vertices
in $U_{j}$ is at most
\begin{align*}
 & d\left(1+o\left(1\right)\right)p\left(\frac{n}{t}\right)^{3}+\sqrt{\varepsilon} \binom{t}{2}p\left(\frac{n}{t}\right)^{3}+\tau \binom{t_{j}}{2}p\left(\frac{n}{t}\right)^{3}+e_{G}\left(X,Z\cup X,U_{j}\right)+\sum_{W\in V\left(\mathcal{R}\right)}e_{G}\left(X,W,W\right)\\
 & \qquad\le\left(1+o\left(1\right)\right)p\left(\frac{n}{t}\right)^{3}\left(d+t^{2}\left(\tau+\sqrt \varepsilon\right)+t\left(\varepsilon ht+1\right)+t\right).
\end{align*}
But by the degree assumption in $G'$ and the fact that $u_{j}\ge t_{j}\left(n/t\right)$ this
number is at least 
\[
\left(n/t\right)\delta_{ij}p\binom{u_{j}}{2}\ge p\delta_{ij}\binom{t_{j}}{2}\left(\frac{n}{t}\right)^{3}.
\]
It follows that 
\[
d\ge\delta_{ij}\binom{t_{j}}{2}-t^{2}\left(\tau+2\varepsilon h+\sqrt{\varepsilon}+2/t\right).
\]
\end{proof}

\subsection{The K\L R conjecture}
One of the most powerful aspects of the sparse regularity method is
that, for a subgraph $G$ of a typical outcome of a random graph, if we find a substructure in the cluster graph (which is usually
dense, therefore comparatively easy to analyse), then a corresponding
structure must also exist in the original graph $G$. For graphs,
this was conjectured to be true by Kohayakawa, \L uczak and R\"odl~\cite{KLR97},
and was proved by Conlon, Gowers, Samotij and Schacht~\cite{CGSS14}.
We will need a generalisation to hypergraphs, which again we state
in more general form than we need, in case it is useful for future
applications. First we need some definitions.
\begin{defn}
Consider an $r$-graph $H$ with vertex set $\left\{ 1,\dots,k\right\} $
and let $\mathcal{G}\left(H,n,m,p,\varepsilon\right)$ be the collection
of all $r$-graphs $G$ obtained in the following way. The vertex set
of $G$ is a disjoint union $V_{1}\cup\dots\cup V_{k}$ of sets of
size $n$ . For each edge $\left\{ i_{1},\dots,i_{r}\right\} \in E\left(H\right)$,
we add to $G$ an $\left(\varepsilon,p\right)$-regular $r$-graph
with $m$ edges between $V_{i_{1}},\dots,V_{i_{r}}$. These are
the only edges of $G$.
\end{defn}

\begin{defn}
For $G\in\mathcal{G}\left(H,n,m,p,\varepsilon\right)$, let $\#_H(G)$
be the number of ``canonical copies'' of $H$ in $G$, meaning that
the copy of the vertex $i$ must come from $V_{i}$.
\end{defn}

\begin{defn}
\label{def:3-density}The $r$-\emph{density} $m_{r}\left(H\right)$
of an $r$-graph $H$ is defined as
\[
m_{r}\left(H\right)=\max\left\{ \frac{e\left(H'\right)-1}{v\left(H'\right)-r}:H'\subseteq H\text{ with }v\left(H'\right)>r\right\} .
\]
\end{defn}

Now, the hypergraph version of the K\L R conjecture is as follows.

\begin{thm}
\label{thm:KLR}For every linear $r$-graph $H$ (that is, an $r$-graph where every two edges intersect on at most one vertex) and every $d>0$,
there exist $\varepsilon,\xi>0$ with the following property. For
every $\eta>0$, there is $C>0$ such that if $p\ge CN^{-1/m_{r}\left(H\right)}$,
then with probability $1-e^{-\Omega\left(N^{r}p\right)}$ the following
holds in $\boldsymbol{G}\in \operatorname{H}^r(N,p)$. For every $n\ge\eta N$,
$m\ge dpn^{r}$ and every subgraph $G'$ of $\boldsymbol{G}$ in $\mathcal{G}\left(H,n,m,p,\varepsilon\right)$,
we have 
\[
\#_H(G')>\xi\left(\frac{m}{n^{r}}\right)^{e\left(H\right)}n^{v\left(H\right)}.
\]
\end{thm}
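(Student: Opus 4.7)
The plan is to adapt Conlon, Gowers, Samotij and Schacht's graph-case proof \cite{CGSS14} of the K\L R conjecture to the linear $r$-uniform setting. Their proof rests on two pillars: a \emph{balanced supersaturation} lemma, saying that any sufficiently dense host $r$-graph contains many canonical copies of $H$ arranged so that no small set of edges participates in too many of them; and a container/deletion argument that turns balanced supersaturation into an efficient encoding of every ``bad'' configuration $G' \in \mathcal{G}(H, n, m, p, \varepsilon)$ (one with fewer than $\xi(m/n^r)^{e(H)} n^{v(H)}$ canonical copies of $H$), which is then killed by a union bound over $\boldsymbol{G} \sim \operatorname{H}^r(N,p)$. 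I would preserve this skeleton and replace graph-theoretic inputs by their linear-hypergraph analogues.

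First I would prove the balanced supersaturation lemma: for any linear $r$-graph $H$ and $d>0$, every $r$-partite $r$-graph $F$ on parts of size $n$ with at least $d p n^r$ edges across each $r$-tuple of parts corresponding to an edge of $H$ contains $\Omega\bigl((m/n^r)^{e(H)} n^{v(H)}\bigr)$ canonical copies of $H$, arranged so that every $j$-subset of $E(F)$ lies in at most the ``expected'' number of these copies (a polynomial in $m, n, p$). I would establish this by induction on $e(H)$, peeling edges one at a time: at each step the next edge of $H$ is joined to the already-embedded part via at most one shared vertex per previously-embedded edge (by linearity), so the number of new vertices it contributes is controlled by purely local information, and the exponent $m_r(H) = \max_{H' \subseteq H}(e(H')-1)/(v(H')-r)$ is engineered to make the resulting extension counts match.

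Second, I would run the container/deletion step: from balanced supersaturation, each bad $G' \in \mathcal{G}(H, n, m, p, \varepsilon)$ admits a ``fingerprint'' of at most $(1-\gamma)m$ edges from which the remaining edges lie in a deterministically defined container of size $O(m)$, yielding at most $\binom{n^r}{(1-\gamma)m}$ fingerprints per tuple $(V_1, \ldots, V_k)$ together with polynomially many completions per fingerprint. Summing over all tuples of disjoint $n$-sets in $[N]$ and running a Chernoff-style bound on the probability that $\boldsymbol{G}$ contains all $m \cdot e(H)$ edges of some fixed bad $G'$ then gives the conclusion: the probability that any bad configuration appears in $\boldsymbol{G}$ is at most $e^{-\Omega(N^r p)}$, provided $C$ is sufficiently large in terms of $d, \varepsilon$ and $H$.

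The main obstacle is the balanced supersaturation lemma, which was the chief technical innovation of \cite{CGSS14} in the graph case. In the hypergraph setting each edge covers $r$ vertices at once, so naive counting becomes combinatorially richer, but the assumption that $H$ is \emph{linear} (every pair of edges meeting in at most one vertex) is precisely what keeps things under control: when building an $H$-copy edge-by-edge, the shared-vertex constraints between the new edge and earlier edges are essentially graph-like, and the $r$-density $m_r(H)$ (with $-r$ in the denominator instead of $-2$) is the correct threshold making the inductive extension count come out right. For these reasons I expect the CGSS inductive strategy to transfer with essentially notational changes rather than genuinely new ideas.
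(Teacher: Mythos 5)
Your high-level plan does not match what Conlon, Gowers, Samotij and Schacht actually do in the proof of their Theorem~1.6(i), which is the proof that the paper's \cref{sec:KLR-proof} mechanically transcribes into the linear-hypergraph setting. CGSS's argument in their Section~2 is \emph{not} a balanced-supersaturation-plus-container argument. It is a multi-round sparsification argument: they first reduce to a ``lower-regularity'' version (their Theorem~2.1), establish an unconditional one-sided dense counting lemma (their Lemma~2.4, whose $r$-uniform linear analogue already exists in~\cite{KNRS10}), and then, given a putative bad configuration $G'\in\mathcal{G}(H,n,m,p,\varepsilon)$ with too few canonical copies of $H$, they pass through a nested sequence of random sparsifications $G'\supseteq G'_1\supseteq\cdots$, using Chernoff-type concentration to preserve lower-regularity and relative density at each stage, and using the auxiliary sets $Z_s$ of edges with high $H$-link degree to drive a contradiction against the dense counting lemma. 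None of this involves fingerprints, containers, or a deletion step. The balanced-supersaturation-plus-container route you describe is the Balogh--Morris--Samotij / Saxton--Thomason proof of the K\L{}R conjecture, a genuinely different argument which CGSS explicitly contrast with their own.

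This matters beyond attribution, because it is exactly where your plan hand-waves. You claim that ``the CGSS inductive strategy transfers with essentially notational changes,'' but the CGSS strategy you would actually be transferring (sparsification + one-sided counting) does transfer mechanically for linear $H$ --- that is the whole content of \cref{sec:KLR-proof} --- whereas the balanced supersaturation lemma you propose instead is \emph{not} present in CGSS at all, and its $r$-uniform linear version is a nontrivial theorem in its own right (its statement already requires choosing the right co-degree thresholds for every subset of a copy of $H$, and the inductive ``edge-peeling'' argument must control the interaction between the hypergraph container condition and $m_r$). Asserting that this step is purely notational leaves a real gap. If you want to run the container route, you must first prove the linear-hypergraph balanced supersaturation lemma honestly, and you must also verify the counting (not merely existence) version of the container conclusion, since the theorem asks for a quantitative lower bound $\#_H(G')>\xi(m/n^r)^{e(H)}n^{v(H)}$ rather than mere non-emptiness. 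The simpler path, and the one the paper takes, is to follow CGSS's actual Section~2 line by line, changing $2$ to $r$, $m_2$ to $m_r$, pairs to $r$-tuples of parts, and invoking the linear-hypergraph one-sided counting lemma from~\cite{KNRS10} in place of the graph counting lemma; linearity of $H$ is used precisely so that this dense counting lemma is available.
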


The proof of \cref{thm:KLR} is almost exactly the same as the proof
of \cite[Theorem~1.6(i)]{CGSS14}. In \cref{sec:KLR-proof} we will
describe the exact changes one needs to make in order to turn the proof in \cite{CGSS14}
into a proof of \cref{thm:KLR}.

\section{\label{sec:almost-perfect-matchings}Almost-perfect matchings}

For the proof of \cref{conj:pack-in-good} we will need multiple different
ways to find almost-perfect matchings, which we will then be able
to complete into perfect matchings using absorbers. The first result we will need is that high-degree subgraphs of upper-quasirandom
3-graphs have almost-perfect matchings.
\begin{lem}
\label{lem:almost-matching}Let $G$ be an $\left(p,o\left(1\right)\right)$-upper-quasirandom
3-graph. Let $G'\subseteq G$ be a spanning subgraph with minimum
degree at least $0.9p\binom{n}{2}$. Then $G'$ has a matching covering
all but $o\left(n\right)$ vertices.
\end{lem}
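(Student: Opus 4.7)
The plan is to apply the sparse hypergraph regularity lemma to $G'$, find a near-perfect matching in the much denser cluster $3$-graph, and then pull it back to $G'$ via regularity.

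First I would invoke \cref{lem:sparse-regularity} on $G'$; this is legal since $G' \subseteq G$ and $G$ is $(p,o(1))$-upper-quasirandom, hence $G'$ is $(\eta,p,1+o(1))$-upper-uniform for every $\eta=o(1)$. Fix very small $\varepsilon > 0$ and very large $t_0 \in {\mathbb N}$ (to be chosen at the end), and obtain an $(\varepsilon,p)$-regular partition $V_1,\dots,V_t$ into $t_0 \le t \le T$ parts of common size $N = n/t$. Next, apply \cref{lem:transfer-cluster-graph} with $\delta = 0.9$: all but at most $\sqrt{\varepsilon}\,t$ ``bad'' clusters have degree at least $(0.9 - 3\sqrt{\varepsilon} - 3/t_0)\binom{t}{2}$ in the cluster $3$-graph ${\mathcal R}$. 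Delete the bad clusters to obtain ${\mathcal R}'$ on $t' \ge (1-\sqrt{\varepsilon})t$ vertices; a short calculation shows ${\mathcal R}'$ has minimum degree at least $(0.9 - O(\sqrt{\varepsilon}) - O(1/t_0))\binom{t'-1}{2}$, which for $\varepsilon$ small and $t_0$ large lies comfortably above the $5/9$-threshold of R\"odl, Ruci\'nski and Szemer\'edi for the existence of a perfect matching in a $3$-uniform hypergraph with high minimum vertex-degree. Hence ${\mathcal R}'$ contains a matching $M$ of size at least $\lfloor t'/3 \rfloor$, i.e.\ missing at most two clusters.

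For each hyperedge $\{V_i,V_j,V_k\}$ of $M$, the induced tripartite $3$-graph is $(\varepsilon,p)$-regular with density at least $2\varepsilon p$, and a short greedy extraction gives a matching covering all but at most $\varepsilon N$ vertices from each of $V_i,V_j,V_k$: as long as fewer than $(1-\varepsilon)N$ edges have been picked, the untouched subclusters each have size at least $\varepsilon N$, so by $(\varepsilon,p)$-regularity the surviving density is at least $2\varepsilon p - \varepsilon p = \varepsilon p > 0$, guaranteeing a further edge. Taking the union of these intra-cluster matchings across all edges of $M$ yields a matching of $G'$. The uncovered vertices lie either in the bad clusters, in the at-most-$2$ clusters missed by $M$, or among the $\le 3\varepsilon N$ leftovers in each matched cluster, for a grand total of at most $\sqrt{\varepsilon}\,n + 2n/t_0 + 3\varepsilon n$, which is $o(n)$ upon sending $\varepsilon \to 0$ and $t_0 \to \infty$.

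The only real bookkeeping concern is checking that all parameter losses (bad clusters, regularity defects, the $3/t_0$ slack in \cref{lem:transfer-cluster-graph}) cumulatively leave the minimum degree of ${\mathcal R}'$ comfortably above $\tfrac{5}{9}\binom{t'-1}{2}$; given the generous gap between $0.9$ and $\tfrac{5}{9}$ this is automatic. In fact any near-perfect matching existence result for dense $3$-graphs with minimum vertex-degree bounded away from zero would suffice here, so RRS is invoked only for cleanliness.
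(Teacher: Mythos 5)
Your proof is essentially identical to the paper's: apply the sparse regularity lemma, push the degree bound to the cluster $3$-graph via \cref{lem:transfer-cluster-graph}, invoke a dense minimum-vertex-degree perfect matching theorem, then greedily pull the matching back through the regular triples. One small nit: the $5/9$ minimum \emph{vertex}-degree threshold for perfect matchings in $3$-graphs is due to H\`an, Person and Schacht~\cite{HPS09} (which is what the paper cites, at the weaker $0.8$ level), not R\"odl, Ruci\'nski and Szemer\'edi, whose Dirac-type result is for \emph{codegree}; this does not affect the argument.
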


\begin{proof}
For sufficiently large $n'$, every $n'$-vertex 3-graph with minimum
degree at least $0.8\binom{n'}{2}$ has a perfect matching; see for
example \cite{HPS09}. So by \cref{lem:transfer-cluster-graph}, if
we apply the sparse regularity lemma (\cref{lem:sparse-regularity})
to $G'$ with small $\varepsilon$ and large $t_{0}$, we can find a matching
covering $t-2\sqrt{\varepsilon}t$ vertices of the cluster graph.
In each corresponding triple of clusters $V_{i},V_{j},V_{q}$ we can
greedily find a matching with $\left(1-\varepsilon\right)\left(n/t\right)$
vertices, and we can combine these to get a matching in $G'$ covering
$n-3\sqrt{\varepsilon}n$ vertices. Since $\varepsilon$ was arbitrary,
this implies that we can find a matching covering all but $o\left(n\right)$
vertices.
\end{proof}

The second result we need is that almost-regular 3-graphs can be almost-partitioned
into almost-perfect matchings, and moreover the leftover vertices
in each matching can be assumed to be ``well-distributed''.
\begin{thm}
\label{thm:stronger-PS}Fix $\alpha\in\left[0,1\right]$, and consider
a linear 3-graph $S$ with all degrees $\alpha n\pm o\left(n\right)$.
Then $S$ has $\alpha n-o\left(n\right)$ edge-disjoint matchings
$M_{1},\dots,M_{\alpha n-o\left(n\right)}$, each with $n/3-o\left(n\right)$
edges, such that every vertex of $S$ appears in all but $o\left(n\right)$
of the $M_{i}$.
\end{thm}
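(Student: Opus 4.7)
The plan is to derive this decomposition from the Pippenger--Spencer theorem \cite{PS89} on the asymptotic chromatic index of near-regular linear hypergraphs. Since $S$ is linear, its codegree is at most $1$, which is negligible compared to its maximum degree $\Delta = \alpha n + o(n)$, so Pippenger--Spencer provides a proper edge-colouring of $S$ using at most $K = (1+o(1))\Delta = \alpha n + o(n)$ colours. I will take the resulting colour classes $M_1, \dots, M_K$ as the candidate matchings. The vertex-covering property is then essentially automatic from properness: for each vertex $v$, the $\deg_S(v) = \alpha n - o(n)$ edges incident to $v$ receive pairwise distinct colours, so $v$ lies in at least $\alpha n - o(n)$ of the $M_i$ and is missing from at most $K - (\alpha n - o(n)) = o(n)$ of them.

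The only remaining issue is that, a priori, some of the $M_i$ could be much smaller than $n/3$; I resolve this by discarding short matchings. The relevant bound comes from the double-counting identity
\[
\sum_{i=1}^K \bigl(n - 3|M_i|\bigr) \;=\; \sum_{v \in V(S)} \#\bigl\{ i : v \notin V(M_i) \bigr\} \;\le\; n \cdot o(n) \;=\; o(n^2),
\]
which via Markov's inequality implies that for any $\eta = \eta(n)$ tending to zero sufficiently slowly, fewer than $o(n)/\eta(n)$ of the $M_i$ have size less than $n/3 - \eta(n) n$. Choosing $\eta$ so that $o(n)/\eta(n)$ is itself $o(n)$, I delete all such short matchings, leaving at least $\alpha n - o(n)$ matchings of size $n/3 - o(n)$. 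Since only $o(n)$ matchings were removed, the vertex-covering property survives intact.

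The main ``obstacle'', if one can call it that, is simply to invoke the correct formulation of Pippenger--Spencer: for linear uniform hypergraphs with maximum degree tending to infinity, the chromatic index is asymptotic to the maximum degree. The remainder is routine bookkeeping to tune the various $o(n)$ quantities (the slack in $\Delta$, the slack in the chromatic index bound, and the auxiliary parameter $\eta$) jointly by a standard diagonalisation.
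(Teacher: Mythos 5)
Your proposal is correct and follows essentially the same route as the paper: both invoke the Pippenger--Spencer theorem to obtain a proper edge-colouring with $\alpha n + o(n)$ colours, observe that the vertex-covering property is then immediate from properness and the degree condition, and use a simple counting/averaging argument (your double-counting identity is the paper's edge-count inequality recast) to show that only $o(n)$ colour classes fall short of $n/3 - o(n)$ edges and can be discarded. The only cosmetic difference is in bookkeeping: the paper fixes a constant $\varepsilon>0$ and concludes by letting $\varepsilon \to 0$, whereas you directly diagonalise with a slowly decaying $\eta(n)$; both are standard and equivalent.
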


\cref{thm:stronger-PS} is a simple consequence of the following theorem of Pippenger and Spencer~\cite{PS89}, proved using a R\"odl-nibble-type
argument, which we reproduce below.
\begin{thm}
\label{thm:P-S}Fix $\alpha\in\left[0,1\right]$, and consider a linear
3-graph $S$ with all degrees $\alpha n\pm o\left(n\right)$. Then
the edges of $S$ can be partitioned into $\alpha n+o\left(n\right)$ edge-disjoint matchings.
\end{thm}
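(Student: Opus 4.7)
The plan is to invoke the R\"odl-nibble (semi-random) method, which is precisely the technique developed in \cite{Rod85} and applied to this chromatic-index problem in \cite{PS89}. I would construct the $\alpha n + o(n)$ matchings by extracting a random near-matching in each of many rounds, showing that the residual hypergraph remains almost-regular for the next round, and finishing with a short greedy cleanup.

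For a single round (the nibble step), consider a linear 3-graph $S'$ with all degrees approximately some $D \gg 1$. The idea is to activate each edge of $S'$ independently with probability $p = \gamma/D$ for a small constant $\gamma > 0$, and let $\boldsymbol{M}$ consist of those activated edges that share no vertex with any other activated edge; this is a random matching. A direct calculation using linearity (codegree at most $1$) yields $\Pr[e \in \boldsymbol{M}] \approx p e^{-3\gamma}$ for every $e \in E(S')$, so the expected residual degree of a vertex $v$ in $S'' := S' \setminus (V(\boldsymbol{M}) \cup E(\boldsymbol{M}))$ is $(1 - 3\gamma e^{-3\gamma} + O(\gamma^2)) D$. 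The Lipschitz constant of this residual degree with respect to the Bernoulli activation variables is bounded (again using codegree $\leq 1$), so a concentration inequality such as Azuma--Hoeffding or Kim--Vu gives exponentially small failure probability, and a union bound over vertices ensures $S''$ is still almost-regular and (trivially) still linear.

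Starting from $D_0 = \alpha n \pm o(n)$ and iterating the nibble $\Theta(\gamma^{-1} \log n)$ times with carefully tuned parameters, the maximum degree will shrink to $\omega(1)$ while the total number of matchings produced remains $(1 + o(1)) \alpha n$ and their edges cover all but $o(n^2)$ edges of $S$. The leftover is a linear 3-graph of maximum degree $o(n)$; its edges can be greedily assigned to fresh matching indices, since at each edge fewer than $3 \cdot o(n) = o(n)$ indices are forbidden at the three incident vertices while the total budget of $\alpha n + o(n)$ matchings has ample room.

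The main obstacle will be controlling the accumulation of error terms over $\Theta(\log n)$ rounds. Each nibble introduces a small slack in the vertex degrees, and if the error parameter is not renormalised carefully relative to the shrinking $D_i$, the almost-regularity hypothesis is lost well before the degrees have been driven down to $o(n)$. The standard fix is to run many short rounds with small $\gamma$, to track explicit error parameters $\varepsilon_i$ whose growth is dominated by the shrinkage of $D_i$, and to apply concentration with quantitative failure probabilities so that a single union bound at the end suffices.
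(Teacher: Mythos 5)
First, note that the paper does not prove this statement at all: \cref{thm:P-S} is quoted verbatim as a theorem of Pippenger and Spencer \cite{PS89}, and the intended ``proof'' is simply the citation. So you are attempting to reprove a black box. That is fine in principle, but the sketch you give does not actually prove it, and the gap is not just a matter of bookkeeping.

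The mechanism you describe is the R\"odl nibble for constructing a \emph{single} near-perfect matching, not the Pippenger--Spencer edge-colouring argument. In your round, you activate edges with probability $\gamma/D$, keep the isolated activated edges as one matching $\boldsymbol M$, and then delete $V(\boldsymbol M)$ as well as $E(\boldsymbol M)$. Each such bite covers only a $\approx\gamma e^{-3\gamma}$ fraction of the vertices, and because you remove the covered vertices, the union of the matchings produced over all $\Theta(\gamma^{-1}\log n)$ rounds is itself a single matching on $[n]$ of at most $n/3$ edges. This cannot yield $\alpha n+o(n)$ matchings covering all but $o(n^2)$ of the $\approx\alpha n^2/3$ edges: with one matching per round and $\Theta(\log n)$ rounds you produce $\Theta(\log n)$ matchings covering $O(n\log n)=o(n^2)$ edges in total, whereas any partition as in the statement requires at least $\alpha n-o(n)$ matchings. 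The counts in your final paragraph (``$\Theta(\gamma^{-1}\log n)$ rounds'' yet ``$(1+o(1))\alpha n$ matchings'') are mutually inconsistent, and the inconsistency is hiding the real difficulty. The actual proof in \cite{PS89} is a genuinely different semi-random argument: one fixes a palette of $(1+\varepsilon)D$ colours once and for all, repeatedly assigns each still-uncoloured edge a uniformly random colour from that \emph{same} palette, retains a colour only when no edge sharing a vertex received it, and shows by concentration that both the uncoloured degree at each vertex and the number of available colours for each vertex--colour pair stay tightly controlled across rounds; the small leftover is then finished greedily with $o(D)$ fresh colours. Reusing one palette across rounds, and tracking these colour-degree statistics, is exactly what lets the number of colours stay at $(1+o(1))D$ rather than growing with the number of rounds. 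If you do not want to reconstruct that argument, the correct move here is the one the paper makes: cite \cite{PS89}.
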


\begin{proof}[Proof of \cref{thm:stronger-PS}]
Fix any $\varepsilon>0$, which we treat as constant. Consider a partition of edge-disjoint
matchings as guaranteed by \cref{thm:P-S}. Note that each vertex appears in $\alpha n+o(n)$ of the matchings, by the almost-regularity condition on $S$. Let $Q$ be the number of matchings with fewer than $n/3-\varepsilon n$ edges, so that the total number of edges covered by all the matchings is $e(S)\le Q(n/3-\varepsilon n)+(\alpha n+o(1)-Q)(n/3)=\alpha n^2/3-Q\varepsilon n+o(n^2)$. But by the degree condition, we have $e(S)=\alpha n^2/3+o(n^2)$, so $Q=o(n)$.

Now, deleting the $Q$ matchings with fewer than $n/3-\varepsilon n$ edges, we obtain a collection of $\alpha n+o(n)-Q=\alpha n-o(n)$ edge-disjoint matchings each with at least $n/3-\varepsilon n$ edges, such that each vertex appears in $\alpha n+o(n)-Q$ of the matchings, which is all but $o(n)$ of them. Since $\varepsilon$ could have been taken arbitrarily small, the desired result follows.
\end{proof}

\section{\label{sec:pack-in-good}Packing in good systems}

In this section we prove \cref{conj:pack-in-good}. First, we show
how to partition our 3-graph into subgraphs with certain
``nice'' properties.

\subsection{Partitioning for packing}

The majority of the edges of our 3-graph $S$ will go into subgraphs
$G_{1},\dots,G_{\ell}$. These subgraphs will have vertex sets $U_{1},\dots,U_{\ell}$
which each comprise almost all the vertices of $S$, but they will be rather
sparse (each containing approximately a $1/\ell$ fraction of the
edges of $S$). Eventually, we will use \cref{thm:stronger-PS}
to find an almost-perfect packing of almost-perfect matchings in each
of these subgraphs.

Some of the remaining edges will go into subgraphs $F_{1},\dots,F_{\ell}$,
where the vertex set $W_{i}$ of each $F_{i}$ is complementary to
the vertex set of $G_{i}$. Despite each $F_{i}$ having fewer edges
than $G_{i}$, the degrees in $F_{i}$ will be much higher than the
degrees in $G_{i}$ (this will be possible because the $W_{i}$ will
be quite small, and $F_{i}$ will contain almost all the edges
of $S$ within $W_{i}$).

Many of the edges that still remain will go into subgraphs $H_{1},\dots,H_{\ell}$,
whose purpose is to serve as a ``bridge'' between $G_{i}$ and $F_{i}$.
For each $i\le\ell$, after finding our almost-perfect matchings in
$G_{i}$, we will use $H_{i}$ to extend each matching to cover all
of $G_{i}$ and some of $F_{i}$, after which we can iteratively complete
all of our matchings using absorber-resilience properties of $F_{i}$. The details
of the properties we will need are summarised in the following lemma. Say that a graph is $(R,D)$-\emph{absorber-resilient} if it is $D$-absorber-resilient after deleting any choice of at most $R$ vertices.
\begin{lem}
\label{thm:partitioning}For any fixed $\alpha$,
consider an $\left(\alpha,o\left(1\right)\right)$-good linear 3-graph
$S$ with $n$ vertices. Fix any (sufficiently small) $\delta>0$,
and let $\ell=\delta^{-5/2}$. Then there exists a constant $\kappa=\kappa(\alpha,\delta)>0$ and a partition of the edges of $S$ into $3\ell+1$ subgraphs $G_{1},\dots,G_{\ell}$, $H_{1},\dots,H_{\ell}$,
$F_{1},\dots,F_{\ell}$, and $Q$ (not necessarily induced or spanning)
such that the following properties hold:
\begin{enumerate}
\item {\bf Most of the edges are covered:} at least a $\left(1-3\sqrt{\delta}\right)$-fraction of the edges
of $S$ are in some $G_{i}$;
\item {\bf Controlling the sizes of $U_i$ and $W_i$:} for each $i$, the vertex sets $U_{i}=V\left(G_{i}\right)$ and $W_{i}=V\left(F_{i}\right)$
partition $V\left(S\right)$, and $\left|W_{i}\right|=\delta n+o\left(n\right)$;
\item {\bf The 3-graphs $G_i$ are almost regular:} each $G_{i}$ has all degrees in the range $$\left(\alpha\left(1-\delta\right)^{2}\left(1-2\sqrt{\delta}\right)/\ell\right)n/2\pm o\left(n\right);$$
\item {\bf The 3-graphs $F_i$ are relatively dense:} each $F_{i}$ has all degrees at least $0.9999\left(\alpha\delta^{2}\right)n/2$;
\item {\bf Many ``bridging'' edges:} for every vertex of $G_{i}$, there are $\Omega\left(n\right)$ edges
of $H_{i}$ with one vertex in $U_{i}$ and two vertices in $W_{i}$;
\item {\bf The 3-graphs $F_i$ are absorber-resilient:} each $F_{i}$ is $\left(\kappa n,0.9995\left(\alpha\delta^{2}\right)\left(n/2\right)\right)$-absorber-resilient.
\end{enumerate}
\end{lem}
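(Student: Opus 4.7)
The plan is to construct the partition randomly using two independent sources of randomness. For each $i\in[\ell]$, independently sample $W_i$ as a uniformly random $\delta n$-subset of $V(S)$, and set $U_i=V(S)\setminus W_i$; independently, assign each edge $e\in S$ a uniformly random ``colour'' $\sigma(e)\in[\ell]$. Edges are then assigned by a two-stage rule: (i) if $e$ is contained in some $W_i$, place $e$ into $F_{i^\ast}$ with $i^\ast$ chosen uniformly at random from $\{i:e\subseteq W_i\}$; (ii) otherwise, setting $j=\sigma(e)$, place $e$ into $G_j$ if $e\subseteq U_j$, into $H_j$ if $|e\cap W_j|=2$, and into $Q$ if $|e\cap W_j|=1$. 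The priority given to $F$ in stage~(i) is essential, because Property~(4) demands $\deg_{F_i}(v)\approx\alpha\delta^2 n/2$ with no $1/\ell$ factor, so $F_i$ must capture essentially all edges within $W_i$; by contrast $G_j,H_j$ receive only a $1/\ell$ fraction of the remaining edges.

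Properties~(1)--(5) are then verified by computing expectations from the goodness of $S$ (in particular, the almost-regularity $\deg_S(v)=\alpha n/2\pm o(n)$) and applying concentration. Property~(2) is immediate. For~(1), an edge fails to land in some $G_j$ only if $e\subseteq W_i$ for some $i$ (probability at most $\ell\delta^3=\delta^{1/2}$) or if $|e\cap W_{\sigma(e)}|\geq 1$ (probability at most $1-(1-\delta)^3\leq 3\delta$), giving coverage at least $1-3\sqrt\delta$. For~(3),~(4),~(5), the corresponding expected degrees factor as $\deg_S(v)$ times an appropriate product of $(1-\delta)^k\delta^{3-k}$ terms and (where relevant) $1/\ell$, from which the stated bounds follow for sufficiently small $\delta$; in particular for~(4), $\E[\deg_{F_i}(v)\mid v\in W_i]\approx(\alpha n/2)\delta^2\cdot\E[1/|\{i:e\subseteq W_i\}|\mid i\in\{\cdot\}]\geq 0.9999\alpha\delta^2 n/2$ since this last expectation is $1-O(\ell\delta^3)=1-O(\delta^{1/2})$. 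Concentration of each degree to within $o(n)$ follows by \cref{thm:bernstein-type} after encoding the membership of each vertex in each $W_i$ as an indicator and noting that, because $S$ is linear, flipping a single indicator or a single edge's colour alters any fixed degree by at most $O(1)$.

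The main obstacle is Property~(6). Since $F_i\subseteq S[W_i]$, it suffices to show that with high probability, for every $U\subseteq W_i$ with $|U|\leq\kappa n$, the induced subgraph $S[W_i\setminus U]$ is $0.9995\alpha\delta^2(n/2)$-absorber-resilient. Applying the robust absorber-resilience clause of goodness with $w'=|W_i\setminus U|\in[(\delta-\kappa)n,\delta n]$, all but at most $\exp(-10^{-8}(w'/n)^4\alpha^2 n)\binom{n}{w'}$ of the $w'$-vertex induced subgraphs of $S$ are $0.999(w'/n)^2\alpha(n/2)$-absorber-resilient, which (for small $\kappa$) is strictly stronger than what is needed. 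Using the identity $\binom{n}{w'}\binom{n-w'}{\delta n-w'}/\binom{n}{\delta n}=\binom{\delta n}{w'}$, Markov's inequality gives that the expected number of ``bad'' $w'$-subsets of $W_i$ is at most $\exp(-10^{-8}(\delta-\kappa)^4\alpha^2 n)\binom{\delta n}{w'}$. Summing over $w'$ and using $\binom{\delta n}{\kappa n}\leq(e\delta/\kappa)^{\kappa n}$ yields a total failure probability of at most $\exp\bigl(O(\kappa\log(1/\kappa))n-\Omega(\delta^4\alpha^2 n)\bigr)$, which is $o(1)$ once $\kappa=\kappa(\alpha,\delta)$ is chosen sufficiently small. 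A final union bound over vertices, indices, and the finitely many properties shows that with positive probability all of~(1)--(6) hold simultaneously, establishing the existence of the desired partition.
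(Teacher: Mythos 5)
Your construction is essentially the same as the paper's: pick random vertex sets $W_i$, then randomly route each edge to one of $F_i,G_i,H_i,Q$ and verify the six properties hold with positive probability. Your treatment of Property~(6) is if anything a little cleaner (you do the Markov/union-bound argument directly over $w'\in[(\delta-\kappa)n,\delta n]$ rather than via a separate lemma, and the hypergeometric identity $\binom{n}{w'}\binom{n-w'}{\delta n-w'}=\binom{n}{\delta n}\binom{\delta n}{w'}$ is exactly the right device). However, two points need repair.

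First, your routing rule overshoots the degree constant in Property~(3). The paper introduces a deliberate thinning step so that an edge outside every $W_j$ is only \emph{tried} for $G_i$ with probability $p_G=1-2\sqrt{\delta}$; with your rule, every such edge is always routed based on its position relative to $W_{\sigma(e)}$, so for $v\in U_j$ each edge through $v$ lands in $G_j$ with probability about $\frac{1}{\ell}(1-\delta)^2(1-\delta^3)^{\ell-1}\approx\frac{1}{\ell}(1-\delta)^2(1-\sqrt{\delta})$, not $\frac{1}{\ell}(1-\delta)^2(1-2\sqrt{\delta})$. The discrepancy in expected degree is of order $\sqrt{\delta}\cdot\alpha(1-\delta)^2/\ell\cdot n$, which is $\Theta(n)$ for fixed $\alpha,\delta$ and hence not absorbed by the $\pm o(n)$ in the statement. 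This does not affect the downstream use of the lemma (what matters in the proof of \cref{conj:pack-in-good} is almost-regularity of $G_i$ with the right order of magnitude), but as written you prove a slightly different Property~(3). Either reinsert the $p_H/(1-p_F)$ thinning coin as in the paper, or restate the degree constant in Property~(3) as the one your construction actually produces.

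Second, your concentration step does not quite follow from \cref{thm:bernstein-type} as invoked. You sample $W_i$ as a uniformly random $\delta n$-subset, so the membership indicators $\mathbf{1}[v\in W_i]$ are \emph{not} i.i.d.\ Bernoulli, whereas \cref{thm:bernstein-type} is stated for product measures. The paper sidesteps this by using i.i.d.\ Bernoulli$(\delta)$ indicators to define $W_i$ (so $|W_i|=\delta n\pm o(n)$ rather than exactly $\delta n$, which is fine for Property~(2)), and then all the relevant degree counts \emph{are} Lipschitz functions of an i.i.d.\ sequence. To keep your uniform-subset choice, you would need a hypergeometric/without-replacement analogue of the concentration bound, or a coupling argument. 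Switching to the Bernoulli model is the simplest fix, and note that the nice hypergeometric identity you use for Property~(6) can be recovered after conditioning on $|W_i|$, so nothing there is lost.
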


Before we prove \cref{thm:partitioning} we briefly state and prove a lemma regarding absorber-resilience.

\begin{lem}
\label{lem:resilience-union-bound}For any fixed $\alpha,\delta>0$,
consider an $\left(\alpha,o\left(1\right)\right)$-good linear 3-graph
$S$ with $n$ vertices. There is $\kappa=\kappa(\alpha,\delta)>0$ such that if $U$ is a random subset of vertices of $S$, obtained by including each vertex with probability $\delta$ independently, then $S[U]$ is $\left(\kappa n,0.9995\left(\alpha\delta^{2}\right)\left(n/2\right)\right)$-absorber-resilient with probability at least $1-e^{-\Omega(n)}$.
\end{lem}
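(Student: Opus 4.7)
The plan is to combine concentration of $|U|$ with an expectation-based union bound over ``bad'' subsets of $U$, using a hypergeometric rather than Bernoulli probability, and choosing $\kappa$ small as a function of $\alpha,\delta$.

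First, by Chernoff, $|U|\in[(1-\gamma)\delta n,(1+\gamma)\delta n]$ with probability $1-e^{-\Omega(n)}$ for any fixed $\gamma>0$. Pick $\gamma=10^{-4}$ so that $0.999(1+\gamma)^2<0.9995$, condition on this event, and set $w_0=|U|$; then $U$ is uniform among $w_0$-subsets of $[n]$. This choice of $\gamma$ ensures that for any $W'\subseteq[n]$ with $|W'|\le w_0$, the goodness threshold $0.999(|W'|/n)^2\alpha(n/2)$ is at most the target $D_0 := 0.9995\alpha\delta^2(n/2)$. Call $W'$ \emph{bad} if $S[W']$ is not $D_0$-absorber-resilient; by the robust absorber-resilience property of $S$, the number of bad $w'$-subsets of $[n]$ is at most $\exp(-10^{-8}(w'/n)^4\alpha^2 n)\binom{n}{w'}$. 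It suffices to show that no $W'\subseteq U$ with $|W'|\in[w_0-\kappa n,w_0]$ is bad, since then every surviving set $U\setminus D'$ (for $|D'|\le\kappa n$) is $D_0$-absorber-resilient, giving the required $(\kappa n,D_0)$-absorber-resilience.

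The probability that a fixed $w'$-subset $W'$ is contained in the uniform $w_0$-subset $U$ equals $\binom{n-w'}{w_0-w'}/\binom{n}{w_0}$, so the identity $\binom{n}{w'}\binom{n-w'}{w_0-w'}=\binom{n}{w_0}\binom{w_0}{w'}$ yields
\[
\E\bigl[\#\{W'\subseteq U:|W'|=w',\,W'\text{ bad}\}\bigr]\le\exp\bigl(-10^{-8}(w'/n)^4\alpha^2 n\bigr)\binom{w_0}{w'}.
\]
Since $w_0-w'\in[0,\kappa n]$ and $w_0\le 2\delta n$, we have $\binom{w_0}{w'}=\binom{w_0}{w_0-w'}\le(2e\delta/\kappa)^{\kappa n}$. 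Choose $\kappa=\kappa(\alpha,\delta)>0$ small enough that $\kappa\log(2e\delta/\kappa)<10^{-10}\delta^4\alpha^2$ (possible since the left side tends to $0$ as $\kappa\to 0$); then, since $w'\ge w_0-\kappa n\ge(\delta/2)n$, the expected count is at most $e^{-cn}$ for some $c=c(\alpha,\delta)>0$. Summing over the $\le\kappa n+1$ values of $w'$ and applying Markov's inequality finishes the argument.

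The main obstacle is exactly the reason for conditioning on $|U|=w_0$. A naive Bernoulli union bound fails: unconditionally $\Pr[W'\subseteq U]=\delta^{w'}$, giving an expected count $\binom{n}{w'}\delta^{w'}\approx e^{\delta n}$ of $w'$-subsets of $U$, which swamps the goodness factor $e^{-10^{-8}\delta^4\alpha^2 n}$ whenever $\delta$ is small. The hypergeometric computation replaces this combinatorial factor by $\binom{w_0}{w'}$, which for $w'\ge w_0-\kappa n$ is subexponential and can be made negligible by taking $\kappa$ sufficiently small---precisely the flexibility afforded by $\kappa=\kappa(\alpha,\delta)$ in the statement.
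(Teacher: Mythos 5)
Your proof is correct and follows essentially the same strategy as the paper: condition on the size of $U$ via Chernoff so that $U$ becomes uniform over subsets of its size, then take a union bound over all sets obtained by deleting up to $\kappa n$ vertices, using the hypergeometric containment probability together with the robust absorber-resilience property of goodness, and choose $\kappa=\kappa(\alpha,\delta)$ small enough that the resulting $\exp(O(\kappa\log(1/\kappa))n)$ combinatorial factor is dominated by the $\exp(-\Omega(\delta^4\alpha^2 n))$ goodness bound. You are in fact more careful than the paper's terse write-up in one genuinely relevant respect: you explicitly impose the upper bound $|U|\le(1+\gamma)\delta n$ with $\gamma$ small, which is needed so that the goodness threshold $0.999(w'/n)^2\alpha(n/2)$ stays below the target $0.9995\alpha\delta^2(n/2)$ for every $w'\le|U|$, whereas the paper only records the lower bound $|U|\ge\delta n/2$ and leaves this (true but necessary) point implicit.
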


\begin{proof}
Choose $\kappa$ sufficiently small such that  $\binom{n}{\kappa n}\exp\left(-10^{-8}(\delta/2)^4\alpha^2 n\right)=e^{-\Omega(n)}$. By the Chernoff bound, we have $|U|\ge \delta n/2$ with probability at least $1-e^{-\Omega(n)}$. Condition on such an outcome for $|U|$, so that $U$ is now a uniformly random set of vertices of this size. Now, the desired result follows from the robust-absorber resilience property of goodness, taking the union bound over all ways to delete up to $\kappa n$ vertices from $U$.
\end{proof}

Now we can prove \cref{thm:partitioning}.

\begin{proof}[Proof of \cref{thm:partitioning}]
We will describe a random procedure to build the $G_{i},H_{i},F_{i}$
and show that the desired properties are satisfied with positive probability.
It suffices to show that each of the six properties holds with probability strictly larger than (say) $5/6$ individually; the result will then follow by a simple union bound.

For each $i\le\ell$ and each $v\in V\left(S\right)$, put $v\in W_{i}$
with probability $\delta$ (independently for each $i,v$). Then,
let $U_{i}=V\left(S\right)\backslash W_{i}$. By a simple application of the Chernoff bounds we see that  a.a.s.\ property 2 holds. Moreover, by \cref{lem:resilience-union-bound} we obtain that a.a.s.\ each $S\left[W_{i}\right]$ is $\left(\kappa n,0.9995\left(\alpha\delta^{2}\right)\left(n/2\right)\right)$-absorber-resilient. Since we will choose $F_{i}$ to be a spanning subgraph of $S\left[W_{i}\right]$,
we will obtain that a.a.s.\ property 6 holds.

Now, we build the $G_{i}$, $H_{i}$, $F_{i}$. Do the following for
each edge $e$, independently.
\begin{enumerate}
\item If $e$ is a subset of some $W_{i}$ (which happens with probability
$p_{F}:=1-\left(1-\delta^{3}\right)^{\ell}\approx\delta^{3}\ell=\sqrt{\delta}$),
then do the following.
\begin{enumerate}
\item If $e$ is a subset of a \emph{unique} $W_{i}$, put $e\in F_{i}$;
\item otherwise, put $e\in Q$.
\end{enumerate}
\item Choose a uniformly random $i\le\ell$, and choose $p_{H}$ to satisfy
$p_{F}+p_{H}=2\sqrt{\delta}$. If $e$ is not a subset of any $W_{j}$,
then with probability $p_{H}/\left(1-p_{F}\right)$ do the following.
\begin{enumerate}
\item If $e$ has one vertex in $U_{i}$ and two vertices in $W_{i}$, put
$e$ in $H_{i}$.
\item Otherwise, put $e\in Q$.
\end{enumerate}
\item The probability we have not taken any of the previous actions is $p_{G}:=1-2\sqrt{\delta}$.
In this case, do the following (still with $i\le\ell$ as a uniformly
random index).
\begin{enumerate}
\item If $e\subseteq U_{i}$, put $e$ in $G_{i}$.
\item Otherwise, put $e$ in $Q$.
\end{enumerate}
\end{enumerate}
By the Chernoff bound, a.a.s.\ property 1 holds.

Now, for a vertex $v$, let $d_{G_i}(v)$ be the number of edges $e\in S$ containing $v$, such that $e\setminus \{v\}\subseteq U_i$ and such that in the above procedure, the random index chosen for $e$ is $i$. So, if $v\in U_i$ then $d_{G_i}=\deg_{G_i}(v)$. Since $S$ is a partial Steiner triple system, the edges containing $v$ do not intersect other than in $v$, so $d_{G_i}(v)$ has a binomial distribution $\operatorname{Bin}\left(\alpha n/2\pm o\left(n\right),\left(1-\delta\right)^{2}p_{G}/\ell\right)$. By the Chernoff bound and the union bound it follows that a.a.s.\ property 3 holds.

Next, if for some $i\le\ell$ and edge $e\in E\left(S\right)$
we condition on the event that $e\subseteq W_{i}$ then the probability
that $e$ is contained in some other $W_{j}$ is $p^{*}:=1-\left(1-\delta^{3}\right)^{\ell-1}\approx\sqrt{\delta}$.
So, for every $i\le\ell$ and vertex $v$, if we condition on the
event that $v\in W_{i}$ then $\deg_{F_{i}}\left(v\right)$ has a
binomial distribution $\operatorname{Bin}\left(\alpha n/2\pm o\left(1\right),\delta^{2}\left(1-p^{*}\right)\right)$,
so by the Chernoff bound a.a.s.\ property 4 holds (provided $\delta$
is sufficiently small).

Finally, for every $i\le\ell$ and vertex $v$, if we condition on the
event that $v\in U_{i}$ then $\deg_{F_{i}}\left(v\right)$ has a binomial distribution $\operatorname{Bin}\left(\Omega(n),\Omega(1)\right)$, so by the Chernoff bound a.a.s.\ property 5 holds.
\end{proof}

\subsection{Absorbers}

Absorbers are the basic building blocks for a larger structure which
will eventually allow us to complete an almost-perfect matching into
a perfect matching. The relative positions of the absorbers in this
structure will be determined by a ``template'' with a ``resilient
matching'' property.
\begin{lem}
\label{lem:resilient-template}For any sufficiently large $n$, there
exists a 3-graph $T$ with $10n$ vertices, maximum degree at most
40, and an identified set $Z$ of $2n$ vertices, such that if we
remove any $n$ vertices from $Z$, the resulting hypergraph has a
perfect matching. We call $T$ a \emph{resilient template} and we
call $Z$ its \emph{flexible set.}
\end{lem}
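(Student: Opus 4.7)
The plan is to lift Montgomery's robustly matchable bipartite template to a $3$-uniform hypergraph by attaching small pendant gadgets of auxiliary vertices to each vertex of the matching side. First, I would invoke (or, if needed, prove by a standard probabilistic argument) Montgomery's template lemma: for all $n$ sufficiently large, there is a bipartite graph $B=(I,J)$ with $|I|=n$, $|J|=2n$, and $\Delta(B)\le 40$, such that for every $J'\subseteq J$ with $|J'|=n$, the induced subgraph $B[I\cup(J\setminus J')]$ contains a perfect matching saturating $I$.

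Given such a $B$, I would set $V(T)=I\cup J\cup\bigsqcup_{i\in I}W_i$, where the sets $W_i=\{w_i^1,\dots,w_i^7\}$ are pairwise disjoint $7$-element pendants, so $|V(T)|=n+2n+7n=10n$, and declare the flexible set to be $Z:=J$. For each $i\in I$ I would fix an arbitrary partition $N_B(i)=C_i^1\sqcup\cdots\sqcup C_i^7$ of its $B$-neighborhood with each part of size at most $\lceil\deg_B(i)/7\rceil\le 6$, and then include in $T$ two families of triples: a \emph{Type~A} triple $\{i,j,w_i^k\}$ for every $i\in I$, $k\in[7]$, and $j\in C_i^k$; and, for every $i\in I$, every $3$-subset of $W_i$ as a \emph{Type~B} triple. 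The degree bound is immediate: each $i\in I$ or $j\in J$ lies in at most $\Delta(B)\le 40$ Type~A triples, while each $w_i^k$ lies in $|C_i^k|\le 6$ Type~A triples together with $\binom{6}{2}=15$ Type~B triples, so $\Delta(T)\le 40$.

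For the robust matching property, given any $Z'\subseteq Z$ with $|Z'|=n$, Montgomery's property yields a perfect matching $M$ in $B$ between $I$ and $J\setminus Z'$. For each $(i,j)\in M$, letting $k(i,j)$ denote the unique index with $j\in C_i^{k(i,j)}$, I take the Type~A triple $\{i,j,w_i^{k(i,j)}\}$; these $n$ triples are pairwise disjoint and cover $I$, $J\setminus Z'$, and exactly one vertex of each $W_i$. The remaining $6n$ vertices form a disjoint union of $n$ six-sets $W_i\setminus\{w_i^{k(i,j)}\}$, and each such six-set can be arbitrarily partitioned into two $3$-subsets, every one of which is a Type~B triple by construction. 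Together this yields $3n$ disjoint triples forming a perfect matching of $T-Z'$.

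The only serious ingredient is Montgomery's bipartite template lemma; the $3$-uniform dressing on top of it is entirely mechanical, with the pendant size $7$ engineered so that $\binom{6}{2}+\lceil 40/7\rceil=21\le 40$ and so that every $6$-subset of $W_i$ admits a partition into two triples. The main obstacle is therefore outsourced to producing the bipartite template, which can be done either by quoting Montgomery or by a direct random construction: one takes $B$ to be a (nearly) regular random bipartite graph of a suitable bounded degree and verifies the Hall condition on $B[I\cup(J\setminus J')]$ by a union bound over minimally deficient sets, using standard concentration inequalities.
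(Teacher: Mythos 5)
Your pendant-gadget lifting of a bipartite template to a $3$-uniform hypergraph is the right general idea, and the arithmetic of the gadget (seven pendants per $I$-vertex, one consumed by the matched Type~A triple, the remaining six split into two Type~B triples; degree bound $\binom{6}{2}+\lceil 40/7\rceil=21\le 40$) all checks out. However, the bipartite template lemma you feed into it is not Montgomery's lemma, and it is in fact false as stated. If $B$ is a bipartite graph with parts $I,J$, $|I|=n$, $|J|=2n$, and $\Delta(B)\le 40$, take any $i_0\in I$; since $|N_B(i_0)|\le 40<n$ we may choose $J'\subseteq J$ with $|J'|=n$ and $N_B(i_0)\subseteq J'$, and then $i_0$ is isolated in $B[I\cup(J\setminus J')]$, so there is no matching saturating $I$. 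Thus \emph{no} bounded-degree bipartite graph has the robust-matching property when the entire side $J$ is declared flexible, and the same singleton Hall violation kills the alternative you sketch (random near-regular bipartite $B$ plus a union bound over deficient sets): the union bound never gets off the ground because it already fails for $|S|=1$.

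This matters because in your construction the choice $Z:=J$ is forced: with $|I|=a$, $|J|=b$, seven pendants per $I$-vertex and $|Z|=2n$ inside $J$ with $|I|=|J|-n$, the constraint $a+b+7a=10n$ gives $a=n$, $b=2n$, so $Z$ must be all of $J$. Montgomery's actual robustly matchable template (the one Kwan's Lemmas~5.2--5.3, cited in the paper, rest on) is built so that the flexible set is a \emph{proper} subset of one side; the non-flexible remainder guarantees that no vertex can be stranded by an adversarial removal from the flexible set. Once the correct template statement is in hand, the side sizes and pendant count must be re-derived to match, and the resulting lift is no longer the clean $n+2n+7n$ split you wrote down. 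As it stands, the proposal outsources its one nontrivial ingredient to a statement that is provably false, so the proof has a genuine gap.
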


\cref{lem:resilient-template} is an immediate consequence of \cite[Lemmas~5.2 and 5.3]{Kwa16},
and is proved using a construction due to Montgomery~\cite{Mon14}.
Now, we will want to arrange absorbers in the positions prescribed
by a resilient template, as follows.
\begin{defn}
An \emph{absorbing structure} is a 3-graph $H$ of the following form.
Consider a resilient template $T$ and put externally vertex-disjoint
absorbers on each edge of $T$, introducing 18 new vertices for
each. Then delete the edges of $T$. That is, the template just
describes the relative positions of the absorbers, its edges
are not actually in the absorbing structure.
\end{defn}

Note that an absorbing structure with a flexible set of size $2n$
has at most $10n+18\times400n/3=O\left(n\right)$ vertices, at most
$13\times\left(400n/3\right)=O\left(n\right)$ edges and maximum
degree at most $40=O\left(1\right)$. An absorbing structure $H$
has the same crucial property as the resilient template $T$ that
defines it: if we remove any half of the vertices of the flexible set
$Z$ then what remains of $H$ has a perfect matching. Indeed, after this
removal we can find a perfect matching $M$ of $T$, then our perfect
matching of $H$ can be comprised of the covering matching of the
absorber on each edge of $M$ and the non-covering matching for
the absorber on each other edge of $T$.

So, if we can find an absorbing structure $H$ with flexible set $Z$
in our 3-graph $S$, then to find a perfect matching it suffices to
find a matching that covers all the vertices outside $H$ and any
half of the vertices in $Z$. For the proof of \cref{conj:pack-in-good},
we will be able to construct an absorbing structure with prescribed
flexible set using absorber-resilience and the following simple lemma.
\begin{lem}
\label{lem:embed-absorbing-structure}Suppose an $n$-vertex 3-graph
$G$ has the property that in every induced subgraph with $n-n'$
vertices, there is an absorber rooted on every triple of vertices.
Suppose also that $n'$ is sufficiently large. Then given any subset
$Z\subseteq V\left(G\right)$ of size  $\omega(1)=|Z|\leq  n'/\left(18\times400/3\right)$, $G$ contains an absorbing structure with flexible set $Z$.
\end{lem}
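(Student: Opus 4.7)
The plan is to proceed by a straightforward greedy construction. First, set $m=\lceil |Z|/2\rceil$ and invoke \cref{lem:resilient-template} (which requires only $m$ to be sufficiently large, hence the hypothesis $|Z|=\omega(1)$) to obtain a resilient template $T$ on $10m$ vertices with flexible set of size $2m$ and maximum degree at most $40$. Relabel the flexible set of $T$ to be exactly $Z$ (padding arbitrarily if $|Z|$ is odd), and embed the remaining $8m\le 4|Z|$ template vertices as distinct vertices of $V(G)\setminus Z$, chosen arbitrarily. From now on we view $V(T)$ as a subset of $V(G)$ of size at most $5|Z|$ containing $Z$.

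Next, enumerate the edges of $T$ as $e_1,\dots,e_s$, where $s\le 40\cdot 10m/3\le 200|Z|/3$. We build the absorbers one at a time, maintaining a set $U\subseteq V(G)$ of vertices already used by the structure, initialised to $U:=V(T)$. At step $k$, write $e_k=\{x_k,y_k,z_k\}$ and set $V'_k:=(V(G)\setminus U)\cup\{x_k,y_k,z_k\}$. Provided $|U|\le n'$, the induced subgraph $G[V'_k]$ has at least $n-n'+3>n-n'$ vertices and contains the triple $\{x_k,y_k,z_k\}$, so the hypothesis produces an absorber $A_k\subseteq G[V'_k]$ rooted on $\{x_k,y_k,z_k\}$. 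Its $18$ external vertices then lie in $V(G)\setminus U$, so we append them to $U$ and move on.

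The only thing to verify is that $|U|\le n'$ is maintained throughout, which is pure bookkeeping: at any point
\[
|U|\;\le\;|V(T)|+18s\;\le\;5|Z|+18\cdot\frac{200|Z|}{3}\;=\;5|Z|+1200|Z|\;=\;1205|Z|,
\]
and the hypothesis $|Z|\le n'/(18\times 400/3)=n'/2400$ gives $|U|\le 1205 n'/2400<n'$, so the greedy step always succeeds.

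Finally, because at each step the new external vertices were chosen from $V(G)\setminus U$, the absorbers $A_1,\dots,A_s$ are pairwise externally vertex-disjoint (they meet only at rooted vertices of $T$), and together they form exactly an absorbing structure on $T$ with flexible set $Z$, as required. There is no real obstacle in the argument; the numerical threshold $|Z|\le n'/2400$ in the statement is calibrated precisely so that the total vertex footprint $1205|Z|$ of the completed structure fits within the $n'$-vertex ``slack'' provided by the hypothesis, which is what allows the greedy step to be applied all the way to the last edge of $T$.
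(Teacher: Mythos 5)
Your proof is correct and takes essentially the same greedy approach as the paper: fix a resilient template whose flexible set is $Z$, then iteratively find an absorber for each template edge inside the induced subgraph obtained by deleting already-used vertices, with the numerical hypothesis $|Z|\le n'/2400$ ensuring the deleted set never exceeds $n'$. The only (welcome) refinement is that you explicitly include the template vertices $V(T)$ in the set to avoid, whereas the paper's one-line bound counts only the absorbers' external vertices; both fit comfortably within the budget since the constant $2400$ has roughly a factor-of-two slack.
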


\begin{proof}
Let $q=\left|Z\right|/2$, and fix a resilient template $T$ on an
arbitrary set of $10q$ vertices of $G$ (the edges of $T$ do
not have to exist in $G$). Now, we can build our absorbing structure
greedily, iteratively finding disjoint absorbers on each edge
of $T$. Indeed, at any point, the non-rooted vertices of the absorbers
found so far together comprise a total of at most $18\times400q/3\le n'$
vertices. After deleting these vertices, we can still continue to
find absorbers rooted on every desired triple of vertices.
\end{proof}
In the proof of \cref{conj:pack-in-good}, to find a perfect matching
we will choose $Z$ to be a ``rich'' set of vertices as guaranteed
by \cref{lem:rich-set}. Then, we will be able to use \cref{lem:almost-matching}
to find a matching covering almost all the vertices outside our absorbing
structure, and the choice of $Z$ will allow us to extend this to
a matching covering all the vertices outside the absorbing structure
and half of $Z$. We will then use the absorbing structure to complete
this into a perfect matching.

\subsection{Proof of \texorpdfstring{\cref{conj:pack-in-good}}{\cref{conj:pack-in-good}}}

Now, we combine the lemmas in the last two subsections to prove \cref{conj:pack-in-good}.
\begin{proof}[Proof of \cref{conj:pack-in-good}]
Fix sufficiently small $\delta>0$ (which we will treat as constant
for most of the proof). Consider subgraphs $G_{1},\dots,G_{\ell}$,
$H_{1},\dots,H_{\ell}$, $F_{1},\dots,F_{\ell}$ and vertex sets $U_1,\dots,U_\ell,W_1,\dots,W_\ell$ as guaranteed by
\cref{thm:partitioning}.

Consider some $i\le\ell$. Let $M_{1},\dots,M_{s}$ be a collection
of $s=\left(\alpha\left(1-\delta\right)^{2}\left(1-2\sqrt{\delta}\right)/\ell\right)n/2-o\left(n\right)$
almost-perfect matchings in $G_{i}$, as guaranteed by \cref{thm:stronger-PS}.
We will show that for each $j\le s$, regardless of how $M_{1},\dots,M_{j-1}$
were previously completed, we can complete $M_{j}$ to a perfect matching
using the edges in $H_{i}$ and $F_{i}$ that have not been used
so far. We will be able to conclude that $S$ has $\alpha\left(1-\delta\right)^{2}\left(1-2\sqrt{\delta}\right)n/2-o\left(n\right)$
edge-disjoint perfect matchings, which implies the desired result
(since $\delta$ could have been chosen to be arbitrarily small).

Now, fix an arbitrary ordering of the $o(n)$ vertices in $U_i$ which are not yet covered by $M_{j}$. For each such uncovered vertex $u$ in turn, choose an edge of $H_i$ which contains $u$ and two vertices of $W_i$, which has not already been used for a previous completion and which does not intersect any edges chosen for previous uncovered vertices. Add this edge to $M_j$. To see that it is possible to make this choice, note that there are $\Omega(n)$ edges in $H_i$ which contain $u$. Only $o(n)$ of these edges have been used to complete previous matchings, and only $o(n)$ of these edges intersect an edge that was previously chosen for a different uncovered vertex (the edges containing $u$ do not intersect in any vertices other than $u$, because $S$ is a partial Steiner triple system).

After doing this, $M_{j}$ covers all of $U_{i}$ and $o\left(n\right)$ vertices
of $W_{i}$. Let $W_{i}'$ be the set of unmatched vertices in $W_i$, and let
$F_{i}'$ be the subgraph consisting of all edges of $F_{i}\left[W_{i}'\right]$
not used for previous matchings $M_{q}$, $q<j$. We now need to find a perfect matching
in $F_{i}'$.

First note that if $\delta$ is sufficiently small, then $s$ (which
is about $\alpha\delta^{5/2}n/2$) is much less than the degrees in
$F_{i}$ (which are about $\alpha\delta^{2}n/2$). So, $F_{i}$ and
$F_{i}'$ have minimum degree at least $0.9998\left(\alpha\delta^{2}\right)n/2$. Let $\kappa=\Omega(1)$ be as in the statement of \cref{thm:partitioning}, let $n'=\min(\kappa n/2,0.0003(\alpha \delta^2)n/2)$, and note that by the absorber-resilience property of $F_i$ (property 6 in \cref{thm:partitioning}), $F_{i}'$ has the property that in any induced subgraph
obtained by deleting at most $n'$
vertices, there is an absorber rooted on every triple of vertices.

By \cref{lem:rich-set} (with sufficiently small $\varepsilon$) and
the pseudorandomness property of goodness (which implies the necessary upper-quasirandomness
condition), for some $\xi=\Omega\left(1\right)$ we can find a vertex
set $Z'$ with the following properties: 

\begin{enumerate}
    \item $\Omega\left(n\right)\le\left|Z'\right|\le n'/\left(2\times 18\times400/3\right)$;
    \item all but $0.01\alpha\delta^{2}n$ vertices have degree $\Omega\left(n\right)$
into $Z'$;
\item  every subset of $\left|Z'\right|\left(1-\xi\right)$
vertices of $Z'$ induces at least one edge.
\end{enumerate}
Arbitrarily add vertices
to obtain a set $Z\supseteq Z'$ such that $\left|Z\right|/2=\left|Z'\right|\left(1-\xi\right)$.
By the absorber-resilience properties of $F_i'$ and \cref{lem:embed-absorbing-structure},
we can find an absorbing structure $H$ in $F_{i}'$ with flexible
set $Z$. Let $X=V\left(H\right)\backslash Z$.

Consider the (at most $0.01\alpha\delta^{2}n$) bad vertices which
do not have the guaranteed degree into $Z'$. Since there are so few
of these vertices, and the absorbing structure $H$ is so small compared
to the degrees in $F_{i}'$, we can greedily find a matching in $F_{i}'$
avoiding $V\left(H\right)$ and covering all the bad vertices. Let
$F_{i}''$ be obtained from $F_{i}'$ by removing the matched vertices
and all the vertices in $X\cup Z'$. Then by \cref{lem:almost-matching}
we can find a matching covering all but $o\left(n\right)$ vertices
of $F_{i}''$. Let $Y$ be the set of uncovered vertices.

Now, it suffices to find a perfect matching in $F_{i}'\left[Y\cup X\cup Z'\right]$.
By the richness of $Z'$, in $F_{i}'$ we can greedily find a matching
of $\left|Y\right|$ edges each with a vertex in $Y$ and two
vertices in $Z'$. We can then greedily augment this matching with
edges inside $Z'$ until there are $\left|Z'\right|\left(1-\xi\right)=\left|Z\right|/2$
vertices of $Z'$ uncovered. Finally, we can use the absorbing structure
to finish the perfect matching.
\end{proof}

\section{\label{sec:random-good}Goodness in random Steiner triple systems}

Let ${\boldsymbol S}$ be a uniformly random ordered {\bf STS}$(n)$,
and let $N={n \choose 2}/3=\left(1+o\left(1\right)\right)n^{2}/6$.
We will show that ${\boldsymbol S}_{\alpha N}$ is a.a.s.\ $(\alpha,o(1))$-good as long as $\alpha$
is a sufficiently small constant. This will suffice to prove \cref{conj:random-good},
because we can partition ${\boldsymbol S}$ into $r$ partial Steiner triple
systems which each have the same distribution as ${\boldsymbol S}_{N/r}$ and then take a union bound.

\subsection{Almost-regularity}

The almost-regularity property of goodness follows immediately
from \cref{lem:triangle-removal-transfer}, taking $\mathcal{Q}={\mathcal O}_{\alpha N}^{\varepsilon,1}$
and observing that every $S\in{\mathcal O}_{\alpha N}^{\varepsilon,1}$ has
the required property. (We could also give a direct proof by considering
a random ordering of the edges of a Steiner triple system; see
\cite[Section~2.1]{Kwa16}).

\subsection{Upper-quasirandomness\label{subsec:upper-uniformity}} 
In this subsection we prove that ${\boldsymbol S}_{\alpha N}$ a.a.s. satisfies the
quasirandomness property of goodness. Consider $q\in{\mathbb N}$
(which we will treat as a sufficiently large constant).

Let $G\sim \operatorname{H}^3(n,p)$ with $p=\left(\alpha/q\right)\left(1+10\left(\alpha/q\right)\right)/n$.
By the Chernoff bound, with probability $1-e^{-\Omega\left(n^{2}\right)}$
our random $3$-graph $G$ has the property that $e_G\left(X,Y,Z\right)\le\left(\alpha/q\right)\left|X\right|\left|Y\right|\left|Z\right|/n+11\left(\alpha/q\right)^{2}n^{2}$
for every triple of disjoint vertex subsets $X,Y,Z$. Denote this
property by $\mathcal{P}$. By \cref{lem:bite-transfer-new} (with
$S=\emptyset$) and \cref{lem:triangle-removal-transfer} (with no
conditioning; that is, $\mathcal{Q}={\mathcal O}_{\alpha N}\cup\left\{ *\right\} $),
$\mathcal{P}$ also holds a.a.s. in ${\boldsymbol S}_{\alpha N/q}$. By symmetry,
in fact it holds a.a.s. in ${\boldsymbol S}^{(\ell)}:={\boldsymbol S}_{\alpha \ell N/q}\backslash{\boldsymbol S}_{\alpha\left(\ell-1\right)N/q}$
for each $\ell\le q$.

So, a.a.s. for every triple of vertex subsets $X,Y,Z$ in ${\boldsymbol S}_{\alpha N}$,
we have $$e_{{\boldsymbol S}_{\alpha N}}\left(X,Y,Z\right)=\sum_{\ell\le q}e_{{\boldsymbol S}^{(\ell)}}\left(X,Y,Z\right)\leq \alpha\left|X\right|\left|Y\right|\left|Z\right|/n+11\left(\alpha^{2}/q\right)n^{2}.$$
Since $q$ could have been arbitrarily large, the desired result follows.

\subsection{Embedding Absorbers}

To finish the proof of \cref{conj:random-good}, we need to prove that
the robust absorber-resilience property of goodness holds a.a.s.
in ${\boldsymbol S}_{\alpha N}$. To do this we use \cref{lem:triangle-removal-transfer}
in its full generality, conditioning on the almost-regularity of the
first few steps of the triangle removal process.

Let $\alpha'=\alpha/2$, let $a$ be small enough for \cref{lem:triangle-removal-transfer},
and let 
\[
\mathcal{Q}=\left\{ *\right\} \cup\left\{ S\in{\mathcal O}_{\alpha N}:S_{\alpha'N}\in{\mathcal O}_{\alpha'N}^{n^{-a},1}\right\} \supseteq{\mathcal O}_{\alpha N}^{n^{-a},1}
\]
The plan is to use \cref{lem:bite-transfer-new}
and \cref{thm:KLR} to show that for any $S\in{\mathcal O}_{\alpha'N}^{n^{-a},1}$,
the triangle-removal process ${\operatorname{R}}\left(S,\alpha'N\right)$ is extremely
likely to produce a partial Steiner triple system which has certain
properties which make it easy to find absorbers. We will then be able
to use \cref{lem:triangle-removal-transfer} to prove that ${\boldsymbol S}_{\alpha N}$
is likely to have these properties as well. In combination with some
much simpler facts about concentration of degrees in ${\boldsymbol S}_{\alpha N}$,
this will allow us to deduce the desired robust absorber-resilience property.
Fix some $\beta>0$ that is very small compared to $\alpha$, and
for every subset of vertices $W\subseteq\left[n\right]$ fix an equipartition
$W=\pi_{A}\left(W\right)\cup\pi_{B}\left(W\right)$. Our main objective
in this subsection is to prove the following claim.
\begin{claim}
\label{claim:absorbers-in-subgraphs}Consider any $S\in{\mathcal O}_{\alpha'N}^{n^{-a},1}$,
and let $\boldsymbol G\sim \operatorname{H}^3(n,p)$ with $p=\alpha'\left(1+10\alpha'\right)/n$.
If $\beta>0$ is sufficiently small, then for any subset $W$ of $w\ge\beta n$
vertices, the following holds with probability $1-e^{-\Omega\left(n^{2}\right)}$.
Consider any spanning subgraphs $G'\subseteq \boldsymbol G\left[W\right]$
and $S'\subseteq S\left[W\right]$ such that
\begin{enumerate}
\item $S'$ has minimum vertex degree at least $0.98\left(\alpha'w^{2}/n^{2}\right)\left(n/2\right)$,
and
\item in $G'$, for any vertex $v\in W$ and every $\beta^3 n$-vertex subset
$U$,
\[
\deg_{\pi_{A}\left(W\right)\backslash U}\left(v\right),\deg_{\pi_{B}\left(W\right)\backslash U}\left(v\right)\ge0.98\left(\alpha'\left(\frac{w/2}{n}\right)^{2}\right)\frac{n}{2}.
\]
\end{enumerate}
Then for any vertices $x,y,z\in W$, there is an absorber in $S'\cup G'$
rooted at $x,y,z$.
\end{claim}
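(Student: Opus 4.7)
The plan is to construct the absorber in two stages: use $S'$ (a subgraph of a partial Steiner triple system, hence linear) to place the three outermost rooted edges, and use $G'$ together with \cref{thm:KLR} to embed the two sub-absorbers inside $\pi_A(W)$ and $\pi_B(W)$ respectively. The key idea is that the outer rooted edges are easy to find via linearity of $S'$, while each sub-absorber (a linear $3$-graph pattern) falls into the regime where the K\L R conjecture applies.

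For the outer stage, condition~(1) gives each of $x, y, z$ at least $\Omega(w^2/n) = \Omega(\beta^2 n)$ edges in $S'$, and by linearity these incident edges are pairwise vertex-disjoint outside their root; thus there are $\Omega(n^3)$ candidate triples of outer rooted edges $\{x, x_1, x_2\}, \{y, y_1, y_2\}, \{z, z_1, z_2\}$, with ample flexibility to pick one at the end whose six sub-root vertices avoid any fixed $O(1)$-sized set of vertices already spoken for by the two sub-absorbers. For the inner stage, consider sub-absorber~$1$ at $\{x_1, y_1, z_1\}$ with external vertices inside $\pi_A(W) \setminus U$ (sub-absorber~$2$ at $\{x_2, y_2, z_2\}$ in $\pi_B(W)$ is symmetric, and disjointness of $\pi_A, \pi_B$ ensures the two sub-absorbers use disjoint external vertices). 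Here $U$ is the $O(1)$-sized set of already-used vertices. Condition~(2) gives each root a ``link graph'' of $\Omega(w^2/n)$ pairs inside $\pi_A(W) \setminus U$. The sub-absorber is a $5$-edge linear $3$-graph on $9$ vertices with $3$-density $m_3(H) = 2/3 < 1$, so $p = \Theta(1/n)$ sits well above the K\L R threshold $n^{-1/m_3(H)} = n^{-3/2}$, and \cref{thm:KLR} is applicable.

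The main obstacle is that \cref{thm:KLR} counts \emph{canonical} copies across unlabelled parts of size $\Omega(n)$, whereas we need embeddings rooted at three specific vertices $x_1, y_1, z_1$. To bridge this gap, I would apply the partitioned sparse regularity lemma (\cref{lem:sparse-regularity,lem:respect-partition}) to $G'[\pi_A(W) \setminus U]$ and use \cref{lem:transfer-cluster-graph-partition} together with condition~(2) to arrange that each root's link graph is $(\varepsilon, p)$-regular across cluster pairs with the correct density. This lets us identify, in the dense cluster $3$-graph, a canonical placement of the two inner edges (a $2$-edge linear $3$-graph $H^*$ with $m_3(H^*) = 1/3$) whose three ``link'' cluster pairs are link-dense for $x_1, y_1, z_1$ respectively. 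Applying \cref{thm:KLR} to $H^*$ with failure probability $\exp(-\Omega(n^3 p)) = \exp(-\Omega(n^2))$ (compatible with the claim) yields $\Omega(n^4)$ canonical inner-edge pairs in $G'[\pi_A(W)]$; combining with the $\Omega(1/n)$ density of each root's link in $\pi_A$ gives, by a first-moment count, $\Omega(n)$ rooted sub-absorbers at $\{x_1, y_1, z_1\}$, so in particular at least one exists. Running the symmetric argument in $\pi_B(W)$ for sub-absorber~$2$ and combining with the outer rooted edges from $S'$ produces the desired absorber in $S' \cup G'$.
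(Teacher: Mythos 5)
Your proposal correctly identifies the central obstacle --- \cref{thm:KLR} locates \emph{canonical} copies of $H$ across parts of size $\Omega(n)$, whereas a sub-absorber is rooted at three \emph{specific} vertices --- but the bridge you propose does not hold. You fix the six sub-roots $x_1,x_2,y_1,y_2,z_1,z_2$ via $S'$ up front, find $\Omega(n^4)$ canonical copies of the two inner edges $H^*$ in $\pi_A(W)$, and then argue by a first-moment count (multiplying by the $\Omega(1/n)$ link density of each root) that $\Omega(n)$ of those copies can be completed to a rooted sub-absorber. This counting step fails because $G'$ is an \emph{adversarially chosen} subgraph of $\boldsymbol G[W]$ (the claim quantifies over all such $G'$ satisfying the degree condition), not a random one. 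After $\boldsymbol G$ is revealed, the adversary is free to delete every edge of $\boldsymbol G$ of the form $\{x_1,a,a'\}$ with $(a,a')$ lying in the neighborhoods where $H^*$ copies concentrate --- removing $O(n)$ edges from each root does not violate condition (2) --- so there is no reason a single fixed triple $(x_1,y_1,z_1)$ must root any sub-absorber in $G'$. A first-moment argument only gives an average over choices, and the adversary chooses after the average is computed. (As a minor aside, $m_3$ of a sub-absorber is $1/2$, not $2/3$, though both are below $1$.)

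The paper's proof precisely sidesteps this by \emph{not} fixing the sub-roots in advance; instead it exploits that $x$ has $\Omega(n)$ choices of $S'$-edge $\{x,a_i^x,b_i^x\}$ and \emph{contracts} each such pair $\{a_i^x,b_i^x\}$ into a single virtual vertex (see Definition~6.4 and \cref{claim:technical-absorbers-in-subgraphs}). This produces parts $U_x,U_y,U_z$ of size $k=\beta^4 n$, each stand-in vertex encoding a full rooted $S'$-edge together with the split of its two endpoints into $W_A$ and $W_B$. Finding an absorber rooted at $(x,y,z)$ in $S'\cup G'$ then becomes equivalent to finding a \emph{contracted absorber} (two sub-absorbers glued at their roots, a linear $3$-graph with $m_3<1$ by \cref{lem:m3-contracted-absorber}) as a canonical copy in the contracted graph with roots drawn from $U_x,U_y,U_z$. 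Since these parts now have size $\Omega(n)$, \cref{thm:KLR} genuinely applies, with regularity transported via \cref{lem:sparse-regularity,lem:respect-partition,lem:transfer-cluster-graph-partition} and a cluster-graph sub-absorber pair located by \cref{lem:sub-absorber-in-dense}. The choice of which $S'$-edges to use at each of $x,y,z$ is made \emph{simultaneously} with the choice of sub-absorber, which is exactly what defeats the adversary. Your proposal would need to incorporate this contraction (or an equivalent mechanism for freeing the sub-roots) to close the gap.
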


Before proving \cref{claim:absorbers-in-subgraphs} we show how it
completes the proof of \cref{conj:random-good}. We break down this
deduction into some relatively simple claims. First, in ${\boldsymbol S}_{\alpha N}$
the degrees into various subsets are typically quite well-behaved. Let $\gamma_w=0.0001(w/n)^2\alpha$.
\begin{claim}
\label{claim:almost-regular}A.a.s. ${\boldsymbol S}_{\alpha N}$ has the property
that for any $w\ge\beta n$ and at least $\left(1-\exp\left(-10^{-8}(w/n)^4\alpha^2 n\right)\right)\binom{n}{w}$
of the $w$-vertex subsets $W$:
\begin{enumerate}
\item all the vertex degrees in ${\boldsymbol S}_{\alpha'N}\left[W\right]$ are $\left(\alpha'w^{2}/n^{2}\right)\left(n/2\right)\pm\gamma_w n$,
and
\item in $\left({\boldsymbol S}_{\alpha N}\backslash{\boldsymbol S}_{\alpha'N}\right)\left[W\right]$
every vertex $v$ has degree $\left(\alpha'\left(w/2\right)^{2}/n^{2}\right)\left(n/2\right)\pm\gamma_w n$
into $\pi_{A}\left(W\right)$ and into $\pi_{B}\left(W\right)$, and
there are $2\left(\alpha'\left(w/2\right)^{2}/n^{2}\right)\left(n/2\right)\pm\gamma_w n$
edges containing $v$, a vertex in $\pi_{A}\left(W\right)$ and
a vertex in $\pi_{B}\left(W\right)$.
\end{enumerate}
\end{claim}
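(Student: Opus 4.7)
My plan is to handle the two degree conditions separately. For each, I will establish concentration in a binomial random 3-graph model via Chernoff bounds, decompose the aggregate ``bad $W$'' property into monotone pieces, and transfer to the random Steiner triple system using \cref{lem:bite-transfer-new,lem:triangle-removal-transfer}. A union bound over $w \in \{\lceil\beta n\rceil, \ldots, n\}$ then gives the claim.

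For condition 1, fix any $w \ge \beta n$ and let $\mathcal{P}_w^{(1)}$ be the event that fewer than $\exp(-10^{-8}(w/n)^4\alpha^2 n)\binom{n}{w}$ subsets $W$ of size $w$ contain a vertex whose degree in ${\boldsymbol S}_{\alpha'N}[W]$ falls outside $(\alpha'w^2/n^2)(n/2) \pm \gamma_w n$. In the binomial model $\boldsymbol{G}\sim\operatorname{H}^3(n,p)$ with $p=\alpha'(1+10\alpha')/n$, the degree $\deg_{\boldsymbol{G}[W]}(v)$ is a sum of $\binom{w-1}{2}$ independent Bernoullis with mean $(1+o(1))(\alpha' w^2/n^2)(n/2)$. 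A Chernoff bound gives per-vertex failure probability at most $\exp(-\Omega(\alpha w^2/n))$, so the expected number of bad subsets is at most $\binom{n}{w}\exp(-\Omega(\alpha w^2/n))$, and Markov's inequality bounds the probability that more than $\binom{n}{w}\exp(-10^{-8}(w/n)^4 \alpha^2 n)$ are bad by $\exp(-\Omega(\alpha \beta^2 n))$ (the Chernoff exponent dominates the allowed bad-fraction exponent for $\alpha$ small). Since ``too many bad $W$'' is not a monotone property, I would split it into two monotone sub-events -- ``too many upper-tail failures,'' monotone decreasing under edge deletion, and ``too many lower-tail failures,'' monotone increasing -- and apply \cref{lem:bite-transfer-new} together with its monotone-increasing counterpart from \cite{Kwa16} (both with $S=\emptyset$). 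Then \cref{lem:triangle-removal-transfer} with $\alpha'$ in place of $\alpha$ and $\mathcal{Q}={\mathcal O}_{\alpha'N}^{n^{-a},1}$ yields the desired bound for ${\boldsymbol S}_{\alpha'N}$.

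For condition 2, I would condition on an outcome ${\boldsymbol S}_{\alpha'N}=S\in{\mathcal O}_{\alpha'N}^{n^{-a},1}$ and study the remaining $\alpha'N$ edges, which follow the distribution ${\operatorname{R}}(S,\alpha'N)$. Here \cref{lem:bite-transfer-new} with $S \ne \emptyset$ approximates these edges by an independent binomial 3-graph $\boldsymbol{G}'\sim\operatorname{H}^3(n,p)$; the same Chernoff+Markov analysis then controls each degree of $v\in W$ into $\pi_A(W)$, into $\pi_B(W)$, and across the two halves (these are again binomials with the expected means). The same monotone-decomposition trick yields a bound in the triangle removal process, and the conditioning on quasirandom $S$ is absorbed into $\mathcal{Q}$ when applying \cref{lem:triangle-removal-transfer}.

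The main obstacle will be calibrating constants so that the Chernoff exponent $\Omega(\alpha w^2/n)$ strictly dominates the allowed bad-fraction exponent $10^{-8}(w/n)^4 \alpha^2 n$ (which is why the statement carries the specific small constant $10^{-8}$, leaving room for the Chernoff term to win), and so that the resulting triangle-removal bound $\exp(-\Omega(\alpha\beta^2 n))$ meets the hypothesis $\exp(-n^{2-b})$ of \cref{lem:triangle-removal-transfer}; this is arranged by choosing $a$ (and hence $b=b(a,h)$) and $\beta$ appropriately in relation to $\alpha$. The non-monotonicity of degree concentration is the other conceptual point and is handled by the upper-tail/lower-tail split noted above.
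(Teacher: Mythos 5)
Your approach is not the one in the paper, and it has a genuine gap in the final step. The paper's proof of this claim does not use the triangle-removal-process transfer machinery at all. Instead, it exploits the exchangeability of a uniformly random ordered Steiner triple system: randomly reordering the edges and vertices of ${\boldsymbol S}$ does not change its distribution, so for any fixed $W$ and $v$ the relevant degrees are hypergeometric random variables. A hypergeometric concentration inequality (e.g.\ \cite[Theorem~2.10]{JLR00}) gives a per-$W$ failure probability of $e^{-(3/2)\gamma_w^2 n}$, and Markov plus a union bound over the $O(n)$ values of $w$ finishes the proof. The crucial point is that this argument only needs failure probabilities of the form $e^{-\Omega(n)}$.

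The gap in your proposal: \cref{lem:triangle-removal-transfer} requires the probability of failure in the (conditioned) triangle removal process to be at most $\exp(-n^{2-b})$ for some fixed $b=b(a,h)>0$, which is an almost-$\exp(-n^2)$-type bound, far smaller than exponential in $n$. Your Chernoff-plus-Markov argument in the binomial model gives only $\exp(-\Omega(\alpha\beta^2 n))$ for the event ``too many bad $W$,'' which does \emph{not} satisfy $\exp(-n^{2-b})$ for any $b<1$, no matter how you calibrate $a$, $\beta$, or $\alpha$ (the exponent $n^{2-b}$ grows faster than any linear function of $n$). Moreover, this cannot be repaired by replacing Markov with a stronger concentration inequality: the number of bad $W$ is an extremely high-Lipschitz function of the edge set (a single edge affects $\binom{n-3}{w-3}$ subsets $W$), so bounded-difference-type bounds are vacuous here; and applying the transfer theorem per-$(v,W)$ and then union-bounding also fails, since the transfer theorem's conclusion only gives $\exp(-\Omega(n^{1-2a}))$, too weak to beat the $\exp(O(n))$ union bound. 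This is precisely why the paper bypasses the transfer machinery for this particular claim and argues directly via exchangeability; that structural observation is the key idea your proposal is missing. (Your upper-/lower-tail monotone decomposition is a nice idea in itself, but it doesn't rescue the argument once the probability target is unreachable.)
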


\begin{proof}
Let $w\ge\beta n$ and consider a particular $w$-vertex set $W$.
Observe that randomly reordering the edges and vertices of ${\boldsymbol S}$
does not change its distribution, so by a concentration inequality
for the hypergeometric distribution (see for example \cite[Theorem~2.10]{JLR00}) and the union bound,
with probability at least $1-e^{-(3/2)\gamma_w^2 n}$ the desired properties
hold for $W$. By Markov's inequality, a.a.s. for every $w\ge\beta n$
the number of $W$ for which the properties fail to hold is at most
$e^{-\gamma_w^2 n}\binom{n}{w}=\exp\left(-10^{-8}(w/n)^4\alpha^2 n\right)\binom{n}{w}$.
\end{proof}
Next, the following lemma summarises how to use \cref{lem:triangle-removal-transfer}
and \cref{lem:bite-transfer-new} to turn \cref{claim:absorbers-in-subgraphs}
into a fact about random Steiner triple systems.
\begin{claim}
\label{claim:absorbers-in-subsystems}A.a.s. ${\boldsymbol S}_{\alpha N}$
has the following property, provided $\beta$ is sufficiently small.
Consider any $w\ge\beta n$ and any $w$-vertex subset $W$, and consider
spanning subgraphs $S''\subseteq\left({\boldsymbol S}_{\alpha N}\backslash{\boldsymbol S}_{\alpha'N}\right)\left[W\right]$
and $S'\subseteq{\boldsymbol S}_{\alpha'N}\left[W\right]$ such that
\begin{enumerate}
\item $S'$ has minimum vertex degree at least $0.98\left(\alpha'w^{2}/n^{2}\right)\left(n/2\right)$,
and
\item in $S''$, for any vertex $v\in W$ and every $\beta^3 n$-vertex subset
$U$, we have 
\[
\deg_{\pi_{A}\left(W\right)\backslash U}\left(v\right),\deg_{\pi_{B}\left(W\right)\backslash U}\left(v\right)\ge0.98\left(\alpha'\left(\frac{w/2}{n}\right)^{2}\right)\frac{n}{2}.
\]
\end{enumerate}
Then for any vertices $x,y,z\in W$, there is an absorber in $S'\cup S''$
rooted at $x,y,z$.
\end{claim}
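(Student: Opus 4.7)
The plan is to transfer \cref{claim:absorbers-in-subgraphs} from the binomial random $3$-graph model to the random Steiner triple system by combining \cref{lem:triangle-removal-transfer} and \cref{lem:bite-transfer-new}.

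Encode the conclusion of \cref{claim:absorbers-in-subsystems} as a single property $\mathcal{P}\subseteq \mathcal{O}_{\alpha N}$: an ordered partial Steiner triple system $T$ belongs to $\mathcal{P}$ iff for every $w\ge \beta n$, every $w$-vertex set $W$, every spanning $S'\subseteq T_{\alpha' N}[W]$ and $S''\subseteq (T\setminus T_{\alpha' N})[W]$ satisfying the stated degree conditions, and every triple $x,y,z\in W$, the $3$-graph $S'\cup S''$ contains an absorber rooted at $x,y,z$. Take
\[
\mathcal{Q}=\{*\}\cup\{T\in \mathcal{O}_{\alpha N}:T_{\alpha' N}\in \mathcal{O}_{\alpha' N}^{n^{-a},2}\},
\]
so that $\mathcal{Q}\supseteq \mathcal{O}_{\alpha N}^{n^{-a},2}$; by \cref{lem:triangle-removal-transfer} it suffices to show $\Pr({\boldsymbol S}'\notin \mathcal{P}\mid {\boldsymbol S}'\in \mathcal{Q})\le \exp(-n^{2-b})$ for ${\boldsymbol S}'\sim {\operatorname{R}}(n,\alpha N)$.

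To bound this, condition on ${\boldsymbol S}'_{\alpha' N}=S$ for an arbitrary $S\in \mathcal{O}_{\alpha' N}^{n^{-a},2}$; the remaining $\alpha' N$ edges are then distributed as ${\operatorname{R}}(S,\alpha' N)$. Viewed as a property $\mathcal{P}_S$ of this second batch of edges, $\mathcal{P}_S$ is monotone decreasing: shrinking the second batch only restricts the family of candidate $S''$, and any $S''$ that survives still witnesses its required absorber in $S'\cup S''$ by hypothesis (crucially, the degree conditions on $S''$ constrain $S''$ itself rather than the ambient hypergraph). This is the exact setting of \cref{lem:bite-transfer-new}, which gives
\[
\Pr({\operatorname{R}}(S,\alpha' N)\notin \mathcal{P}_S)\le \Pr(\boldsymbol G\notin \mathcal{P}_S)+e^{-\Omega(n^{2})},
\]
where $\boldsymbol G\sim \operatorname{H}^3(n,p)$ with $p=\alpha'(1+10\alpha')/n$.

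Finally, apply \cref{claim:absorbers-in-subgraphs} to $S$: for each single $w$-vertex set $W$, the absorber property holds on $\boldsymbol G$ with probability $1-e^{-\Omega(n^{2})}$. Union-bounding over the at most $2^{n}$ choices of $W$ preserves the $e^{-\Omega(n^{2})}$ bound, so $\Pr(\boldsymbol G\notin \mathcal{P}_S)\le e^{-\Omega(n^{2})}\le \exp(-n^{2-b})$ for any fixed $b>0$. Since this bound is uniform in $S$, \cref{lem:triangle-removal-transfer} yields $\Pr({\boldsymbol S}_{\alpha N}\notin \mathcal{P})\le \exp(-\Omega(n^{1-2a}))=o(1)$, which is the desired a.a.s.\ conclusion. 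The main subtleties are verifying the monotone-decreasing property of $\mathcal{P}_S$ (exactly the scenario motivating the new companion \cref{lem:bite-transfer-new}) and slightly strengthening the quasirandomness index of $\mathcal{Q}$ from $(n^{-a},1)$ to $(n^{-a},2)$ so that the hypothesis of \cref{lem:bite-transfer-new} is met.
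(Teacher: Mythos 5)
Your proof is correct and fills in the details of the paper's one-line argument, which likewise combines \cref{claim:absorbers-in-subgraphs} with \cref{lem:bite-transfer-new} and \cref{lem:triangle-removal-transfer} using the conditioning set $\mathcal{Q}$. You also correctly observe that $\mathcal{Q}$ should be built from $(n^{-a},2)$-quasirandomness rather than the $(n^{-a},1)$ stated in the paper, since \cref{lem:bite-transfer-new} requires $S\in\mathcal{O}_m^{\alpha,2}$ (via the triangle-count estimate \cref{lem:triangle-count}), and $(n^{-a},1)$-quasirandomness controls only degrees, not codegrees.
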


\begin{proof}
Note that the property in \cref{claim:absorbers-in-subgraphs} is
a monotone decreasing property of ${\boldsymbol G}$ (depending on $S$). Then,
combine \cref{lem:bite-transfer-new,lem:triangle-removal-transfer,claim:absorbers-in-subgraphs}
(with $\mathcal{Q}$ as in the beginning of this subsection).
\end{proof}
Now, given \cref{claim:almost-regular} and \cref{claim:absorbers-in-subsystems}
it is fairly immediate to deduce that ${\boldsymbol S}_{\alpha N}$ a.a.s.
has the desired absorber-resilience property, as follows. Suppose
that ${\boldsymbol S}_{\alpha N}$ satisfies the conclusions of \cref{claim:almost-regular}
and \cref{claim:absorbers-in-subsystems}, and consider $w\ge\beta n$.
Suppose for some $w$-vertex subset $W$ that ${\boldsymbol S}_{\alpha'N}\left[W\right]$
and $\left({\boldsymbol S}_{\alpha N}\backslash{\boldsymbol S}_{\alpha'N}\right)\left[W\right]$
both satisfy the degree conditions in \cref{claim:almost-regular}
(this is true for at least $\left(1-\exp\left(-10^{-8}(w/n)^4\alpha^2 n\right)\right)\binom{n}{w}$
of the choices of $W$). We want to show that ${\boldsymbol S}_{\alpha N}[W]$
is $D$-absorber-resilient, for $D=0.999\left(\alpha w^{2}/n^{2}\right)\left(n/2\right)$.
Consider a subgraph $S\subseteq{\boldsymbol S}_{\alpha N}\left[W\right]$
with minimum degree $D$.
Observe that by the choice of $\gamma_w$, the hypergraph $S'=S\cap\boldsymbol{S}_{\alpha'N}$ consisting of the edges of $S$ that appear in $\boldsymbol{S}_{\alpha'N}$ has minimum degree at least 
\[
0.999\left(\alpha\frac{w^{2}}{n^{2}}\right)\frac{n}{2}-4\left(\left(\alpha'\left(\frac{w/2}{n}\right)^{2}\right)\frac{n}{2}\pm\gamma_w n\right)\ge0.98\left(\alpha'\frac{w^{2}}{n^{2}}\right)\frac{n}{2}.
\]
Next, let $S''=S\setminus\boldsymbol{S}_{\alpha'N}$, consider any vertex $v$ and any set of $U$ of $\beta^3 n$ other vertices of $W$. Since
$S$ is a partial Steiner triple system there are at most $\beta^3 n$
edges of $S$ involving both $v$ and $U$, so for sufficiently small $\beta$, in $S''$ we have
\begin{align*}
\deg_{\pi_{A}\left(W\right)\backslash U}\left(v\right)&\ge0.999\left(\alpha'\frac{w^{2}}{n^{2}}\right)\frac{n}{2}-\left(\left(\alpha'\frac{w^{2}}{n^{2}}\right)\left(n/2\right)\pm\gamma_w n\right)-3\left(\left(\alpha'\left(\frac{w/2}{n}\right)^{2}\right)\frac{n}{2}\pm\gamma_w n\right)-\beta^3 n\\
&\ge0.98\left(\alpha'\left(\frac{w/2}{n}\right)^{2}\right)\frac{n}{2},
\end{align*}
and similarly $\deg_{\pi_{B}\left(W\right)\backslash U}\left(v\right)\ge0.98\left(\alpha'\left(w/2\right)^{2}/n^{2}\right)\left(n/2\right)$.
So, in $S'$ there is an absorber on every triple of vertices, concluding
the proof of $D$-absorber-resilience.

It remains to prove \cref{claim:absorbers-in-subgraphs}. As in the
statement of \cref{claim:absorbers-in-subgraphs}, consider any $S\in{\mathcal O}_{\alpha'N}^{n^{-a},1}$
and let $\boldsymbol G\sim \operatorname{H}^3(n,p)$ with $p=\alpha'\left(1+10\alpha'\right)/n$.
The proof of \cref{claim:absorbers-in-subgraphs} will reduce to another
claim about a family of auxiliary graphs obtained by contracting edges
of subgraphs of $\boldsymbol G$. For a vertex set $W$ containing a vertex
$v$, let $H_{S\left[W\right]}\left(v\right)$ be the set of all edges
containing $v$ in $S[W]$.
\begin{defn}
\label{def:contracted}Consider any subset $W$ of $w\ge\beta n$
vertices of $\boldsymbol G$, fix any $x,y,z\in W$, and consider any subsets
$H_{x}\subseteq H_{S\left[W\right]}\left(x\right),H_{y}\subseteq H_{S\left[W\right]}\left(y\right),H_{z}\subseteq H_{S\left[W\right]}\left(z\right)$
each with $0.98\left(\alpha' w^{2}/n^{2}\right)\left(n/2\right)$ edges.
Let $\boldsymbol G\left(W,x,y,z,H_{x},H_{y},H_{z}\right)$ be a 3-graph obtained
from $\boldsymbol G\left[W\backslash\left\{ x,y,z\right\} \right]$ by doing
the following.

\begin{enumerate}
\item First, for $k=\beta^{4}n$, choose distinct vertices 
\[
a_{1}^{x},\dots,a_{k}^{x},b_{1}^{x},\dots,b_{k}^{x},a_{1}^{y},\dots,a_{k}^{y},b_{1}^{y},\dots,b_{k}^{y},a_{1}^{z},\dots,a_{k}^{z},b_{1}^{z},\dots,b_{k}^{z}
\]
such that for each $0\le i\le k$, we have $\left\{ x,a_{i}^{x},b_{i}^{x}\right\} \in H_{x},\left\{ y,a_{i}^{y},b_{i}^{y}\right\} \in H_{y},\left\{ z,a_{i}^{z},b_{i}^{z}\right\} \in H_{z}$.
If $\beta$ is sufficiently small this can be done greedily, recalling
that the edges of $H_{x}$ (respectively, of $H_{y}$ or of $H_{z}$)
intersect only at $x$ (respectively, only at $y$ or at $z$).
\item Let $W_{A}$ (respectively $W_{B}$) be the subset of $\pi_{A}\left(W\right)$
(respectively $\pi_{B}\left(W\right)$) obtained by removing $x,y,z$ and all $3k$ of the vertices chosen in the first step.
\item Then, for each $v\in\left\{ x,y,z\right\} $, delete all edges
containing a vertex of $H_{v}$, except those containing two vertices
from $W_{A}$ and some $a_{i}^{v}$, or two vertices from $W_{B}$
and some $b_{i}^{v}$.
\item Finally, for each $v\in\left\{ x,y,z\right\} $, contract each pair
$\left\{ a_{i}^{v},b_{i}^{v}\right\} $ to a single vertex. Let $U_{v}$
be the set of newly contracted vertices.
\end{enumerate}
\end{defn}

Now, to prove \cref{claim:absorbers-in-subgraphs} it suffices to prove
the following claim (observe that $2\left|U_{x}\right|+2\left|U_{y}\right|+2\left|U_{z}\right|+3\le \beta^3 n$ for sufficiently small $\beta$, and taking the union bound over all
choices of $W,x,y,z,H_{x},H_{y},H_{z}$ costs us a factor of only
$e^{O\left(n\right)}$).
\begin{claim}
\label{claim:technical-absorbers-in-subgraphs}For any $W,x,y,z,H_{x},H_{y},H_{z}$
as in \cref{def:contracted}, ${\boldsymbol G}':={\boldsymbol G}\left(W,x,y,z,H_{x},H_{y},H_{z}\right)$
has the following property with probability $1-e^{-\Omega\left(n^{2}\right)}$.
For any spanning subgraphs 
\[
G_{A}\subseteq{\boldsymbol G}'\left[W_{A}\cup U_{x}\cup U_{y}\cup U_{z}\right],\quad G_{B}\subseteq{\boldsymbol G}'\left[W_{B}\cup U_{x}\cup U_{y}\cup U_{z}\right]
\]
with minimum vertex degree at least $0.98\left(\alpha\left(w/2\right)^{2}/n^{2}\right)\left(n/2\right)$,
there are vertices $x'\in U_{x}$, $y'\in U_{y}$, $z'\in U_{z}$
such that in both $G_{A}$ and $G_{B}$ there is a sub-absorber rooted
on $x',y',z'$.
\end{claim}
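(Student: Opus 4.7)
The strategy is to realize the pair of ``simultaneous'' sub-absorbers as a single copy of an auxiliary 3-graph $H^{*}$ embedded in $G^{*}:=G_{A}\cup G_{B}\subseteq{\boldsymbol G}'$. Define $H^{*}$ by taking two vertex-disjoint sub-absorbers and identifying their root triples: it has $15$ vertices (three shared roots and six external vertices from each sub-absorber) and $10$ edges, and is linear with $m_{3}(H^{*})=3/4$. Consequently $p=\Theta(1/n)\gg n^{-1/m_{3}(H^{*})}=n^{-4/3}$, placing $p$ well above the threshold of \cref{thm:KLR}. A canonical copy of $H^{*}$ in $G^{*}$ with its three roots in $U_{x}\times U_{y}\times U_{z}$, its six external-$A$ vertices in $W_{A}$, and its six external-$B$ vertices in $W_{B}$ is exactly a triple $(x',y',z')$ together with the two sub-absorbers required by the claim (and note that no edge of $H^{*}$ involves two $U_{v}$-vertices or mixes $W_{A}$ with $W_{B}$, so the template is compatible with the construction of ${\boldsymbol G}'$).

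To produce such a copy, first apply the sparse regularity lemma (\cref{lem:sparse-regularity}), using \cref{lem:respect-partition}, to $G^{*}$ with the partition $U_{x}\cup U_{y}\cup U_{z}\cup W_{A}\cup W_{B}$. The required $(o(1),p,O(1))$-upper-uniformity of ${\boldsymbol G}'$ holds with probability $1-e^{-\Omega(n^{2})}$ by a Chernoff argument exactly as in \cref{subsec:upper-uniformity}. The minimum-degree assumptions on $G_{A}$ and $G_{B}$ translate, via \cref{lem:transfer-cluster-graph-partition}, into lower bounds on the degrees in the partitioned cluster 3-graph $\mathcal{R}$: all but an $o(1)$-fraction of the clusters in each $\mathcal{U}_{v}$ ($v\in\{x,y,z\}$) have high degree into pairs of $\mathcal{W}_{A}$-clusters (from $G_{A}$) and simultaneously into pairs of $\mathcal{W}_{B}$-clusters (from $G_{B}$), and analogous bounds hold for the $\mathcal{W}_{A}$- and $\mathcal{W}_{B}$-clusters internally.

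Next, use these degree properties to locate a copy of $H^{*}$ in $\mathcal{R}$: greedily pick root clusters $X_{x'}\in\mathcal{U}_{x}$, $X_{y'}\in\mathcal{U}_{y}$, $X_{z'}\in\mathcal{U}_{z}$ with the above strong link property on both sides, and then inside each of the two links greedily choose six further clusters forming a sub-absorber template rooted on $X_{x'},X_{y'},X_{z'}$. This yields $15$ clusters whose ten relevant triples are each $(\varepsilon,p)$-regular with density at least $\tau p$ for a constant $\tau>0$. Restricting $G^{*}$ to these clusters gives $G^{**}\in\mathcal{G}(H^{*},n/t,m,p,\varepsilon)$ with $m\geq\tau p(n/t)^{3}$, so \cref{thm:KLR} (valid with probability $1-e^{-\Omega(n^{3}p)}=1-e^{-\Omega(n^{2})}$) yields
\[
\#_{H^{*}}(G^{**})\;\geq\;\xi(\tau p)^{10}(n/t)^{15}\;>\;0,
\]
and any canonical copy of $H^{*}$ thus obtained is the desired structure.

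The main obstacle is the middle step: locating the $H^{*}$-template inside $\mathcal{R}$. Because $H^{*}$ couples two sub-absorbers through three shared root clusters, the root clusters must have large links \emph{simultaneously} on both the $\mathcal{W}_{A}$- and $\mathcal{W}_{B}$-sides, and one needs two vertex-disjoint ``triangle'' triples on each side. The $o(1)$-fraction of bad clusters output by \cref{lem:transfer-cluster-graph-partition} must be avoided at each step of the greedy construction, but since this bad set has size $o(t)$ while the good-cluster links have size $\Omega(t^{2})$, the construction goes through. Once the template is in place, the invocation of \cref{thm:KLR} is routine.
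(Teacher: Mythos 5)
Your approach is essentially the paper's: you form the auxiliary graph $H^{*}$ (what the paper calls a ``contracted absorber''), apply the sparse regularity lemma together with \cref{lem:transfer-cluster-graph-partition}, locate a copy of $H^{*}$ in the cluster 3-graph, and then invoke \cref{thm:KLR}. However, two steps in your sketch are left open and need the devices the paper uses.

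First, \cref{thm:KLR} applies to subgraphs of a binomial random 3-graph $\operatorname{H}^{3}(N,p)$, but $\boldsymbol{G}'=\boldsymbol{G}(W,x,y,z,H_x,H_y,H_z)$ is \emph{not} such a 3-graph: the contraction and deletion in \cref{def:contracted} remove from consideration all triples with two vertices among the $U_v$'s, or mixing $W_A$ and $W_B$, so those triples have probability $0$ rather than $p$. You correctly observe that a canonical copy of $H^{*}$ never uses a forbidden triple, but this alone does not let you invoke the theorem as stated for subgraphs of $\boldsymbol{G}'$. The paper patches this by introducing an independent $\boldsymbol{G}_{\text{extra}}$ with probability $p$ on exactly the missing triples, so that $\boldsymbol{G}'\cup\boldsymbol{G}_{\text{extra}}\sim\operatorname{H}^{3}(\cdot,p)$, applies \cref{thm:KLR} to this, and then notes that the canonical copy of $H^{*}$ found lies entirely in $\boldsymbol{G}'$.

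Second, your middle step---``greedily choose six further clusters forming a sub-absorber template''---is under-justified. A sub-absorber in the cluster graph requires the two triangle edges $\{X_1,Y_1,Z_1\}$ and $\{X_2,Y_2,Z_2\}$ to be present simultaneously with the three rooted edges, and a naive greedy selection of the external clusters does not enforce this. The paper instead uses a dedicated counting lemma (\cref{lem:sub-absorber-in-dense}) showing that any $n$-vertex 3-graph with degrees at least $0.96\binom{n}{2}$ contains, for any triple in a small designated set $U$, a sub-absorber rooted there whose non-root edges avoid $U$. Something of that form is needed to replace your greedy step; once both gaps are filled your argument coincides with the paper's.
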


We will prove \cref{claim:technical-absorbers-in-subgraphs} with the
sparse regularity lemma (\cref{lem:sparse-regularity}) and \cref{thm:KLR}.
Before doing this, we prove some final auxiliary lemmas. First, we
need to know that absorbers are suitably sparse to apply \cref{thm:KLR}.
Recall the definition of 3-density from \cref{def:3-density}.
\begin{lem}
\label{lem:m3-contracted-absorber}Consider an absorber and contract each rooted edge to a single vertex. Call the resulting
3-graph $H$ a \emph{contracted absorber}. ($H$ is equivalently obtained
by gluing together two sub-absorbers on their rooted vertices). Then
$H$ has $m_{3}\left(H\right)<1$.
\end{lem}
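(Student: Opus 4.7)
The plan is to reduce the problem to checking a single structural configuration. I would first record that, from the ``gluing two sub-absorbers at their three rooted vertices'' description, the contracted absorber $H$ has $v(H) = 9 + 9 - 3 = 15$ vertices and $e(H) = 5 + 5 = 10$ edges. It is linear (the triangle-edges of the two sub-absorbers use disjoint external vertices) and $2$-regular: each shared root $\bar x, \bar y, \bar z$ lies in one root-edge from each sub-absorber, and each external vertex lies in one root-edge and one triangle-edge of its own sub-absorber.

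Now fix any nonempty subgraph $H' \subseteq H$ with $v' := v(H') > 3$ and $e' := e(H')$; the target inequality $(e'-1)/(v'-3) < 1$ rewrites (for integers) as $v' \ge e' + 3$. Two inherited bounds do most of the work. Since $\Delta(H') \le \Delta(H) = 2$, the degree-sum identity gives $3e' \le 2v'$, i.e., $v' \ge 3e'/2$. Since $H'$ is linear, inclusion-exclusion gives $v' \ge 3e' - \binom{e'}{2}$. A short case check verifies that these bounds already yield $v' \ge e' + 3$ for every value of $e'$ except one: linearity settles $e' = 2, 3$ (giving $v' \ge 5, 6$); the degree-sum settles $e' = 5$ (giving $v' \ge 8$) and all $e' \ge 6$ (giving $v' \ge 3e'/2 \ge e' + 3$). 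The only remaining case is $e' = 4$, where both bounds permit $v' = 6$.

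So it remains to rule out a $4$-edge subgraph of $H$ on $6$ vertices. Such a subgraph would be $2$-regular by degree-sum saturation ($3 \cdot 4 = 12 = 2 \cdot 6$); combined with linearity, this forces all $6 = \binom{4}{2}$ pairs of edges to share exactly one vertex---the tetrahedral configuration, corresponding to a $K_4$ in the line graph $L(H)$. I would then identify $L(H)$ explicitly: it has $10$ vertices (the edges of $H$) and consists of two copies of $K_{2,3}$ (one per sub-absorber, with bipartition $\{\text{root-edges}\} = \{A_i, B_i, C_i\}$ versus $\{\text{triangle-edges}\} = \{D_i, E_i\}$) joined by the $3$-edge matching $\{A_1 A_2, B_1 B_2, C_1 C_2\}$ coming from the shared roots. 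A case split on the distribution of a putative $K_4$'s vertices between the two $K_{2,3}$'s then rules it out: within one $K_{2,3}$ the clique number is $2$ (bipartite), and the $3$ cross-edges touch only the $\{A_i, B_i, C_i\}$ side, so none of the splits $(4,0), (3,1), (2,2)$ can yield four pairwise-adjacent vertices. The main obstacle is this $K_4$-free check in $L(H)$; everything else is elementary.
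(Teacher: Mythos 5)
Your proof is correct, and it follows essentially the same strategy as the paper's: use the facts that $H$ is linear with maximum degree $2$ to reduce the $m_3$ computation to a finite structural check on $H$. The bookkeeping differs slightly (the paper bounds $v(H')\le 6$ and then verifies that any two edges of $H$ span at least $5$ vertices and any three span at least $7$, i.e., $L(H)$ is triangle-free; you instead bound by cases on $e(H')$ and isolate the single critical configuration $e'=4$, $v'=6$, forced to be the tetrahedral $2$-regular linear $3$-graph, which you exclude by showing $L(H)$ is $K_4$-free). Your explicit identification of $L(H)$ as two disjoint copies of $K_{2,3}$ joined by a perfect matching on their degree-two sides is a clean way to organize what the paper leaves as a terse ``check by hand,'' but the underlying idea is the same.
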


\begin{proof}
Note that $H$ has maximum degree 2. Let $H'$ be a subgraph of $H$
with $q$ vertices of degree $2$. Then, $3e\left(H'\right)\le v\left(H'\right)+q$,
and it follows that we can only have $\left(e\left(H'\right)-1\right)/\left(v\left(H'\right)-3\right)\ge1$
if $q\ge2v\left(H'\right)-6$. This is impossible if $v(H')>6$, so it
suffices to consider subgraphs $H'$ with up to 6 vertices. It then
suffices to check by hand that every pair of edges spans at least
5 vertices, and every triple of edges spans at least 7 vertices.
\end{proof}
Second, the sparse regularity lemma will give us a very dense cluster
3-graph, and we will need to be able to find absorbers in such a 3-graph.
\begin{lem}
\label{lem:sub-absorber-in-dense}Let $F$ be an $n$-vertex 3-graph
with degrees at least $0.96\binom{n}{2}$, having an identified subset
$U$ of at most $0.001n$ vertices. If $n$ is sufficiently large, then
for any $x,y,z,\in U$ there is a sub-absorber rooted at $x,y,z$
in which the only edges involving vertices of $U$ are the rooted
ones.
\end{lem}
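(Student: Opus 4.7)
The plan is a direct first-moment (counting) argument. Set $A = V(F) \setminus U$, so $|A| \ge 0.999\,n$. A sub-absorber rooted at $(x,y,z)$ with no non-rooted edge touching $U$ corresponds exactly to an ordered tuple $(x_1,x_2,y_1,y_2,z_1,z_2) \in A^6$ of pairwise distinct vertices for which all five triples
\[
\{x,x_1,x_2\},\;\{y,y_1,y_2\},\;\{z,z_1,z_2\},\;\{x_1,y_1,z_1\},\;\{x_2,y_2,z_2\}
\]
lie in $E(F)$. I will show that the number of such tuples is $\Omega(n^6)$, and in particular positive for all sufficiently large $n$.

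First I would translate the minimum degree hypothesis into bounds on the complement $\overline{F}$ of $F$. The assumption gives $\deg_{\overline{F}}(v) \le 0.04\binom{n}{2}$ for every vertex $v$, so summing degrees and dividing by $3$ yields $|\overline{F}| \le 0.007\,n^3$ for large $n$. From this I extract two quantitative corollaries: for each fixed $v \in \{x,y,z\}$, at most $0.04\,n^2$ of the ordered pairs $(a,b) \in A^2$ satisfy $\{v,a,b\} \notin E(F)$ (treating degenerate pairs $a=b$ as trivially bad, a negligible $O(n)$ loss); and globally, at most $0.05\,n^3$ of the ordered triples $(a,b,c) \in A^3$ satisfy $\{a,b,c\} \notin E(F)$, where the small slack absorbs the $O(n^2)$ ordered triples with coinciding coordinates.

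Next I would union-bound the five ``bad'' events, one per edge of the sub-absorber, over $A^6$. Each of the three rooted-edge events forbids at most $0.04\,n^2 \cdot |A|^4$ tuples, and each of the two non-rooted-edge events forbids at most $0.05\,n^3 \cdot |A|^3$ tuples. The total fraction of tuples in $A^6$ that fail at least one constraint is therefore at most $3(0.04) + 2(0.05) \le 0.22$, so at least $(1-0.22)|A|^6 = \Omega(n^6)$ tuples satisfy all five edge conditions. The number of $6$-tuples in $A^6$ with some repeated coordinate is only $\binom{6}{2} |A|^5 = O(n^5)$, so for $n$ sufficiently large a tuple with all six coordinates distinct exists; together with the fact that $x,y,z \in U$ are automatically distinct from elements of $A$, this delivers the required sub-absorber.

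There is essentially no obstacle: the minimum density $0.96\binom{n}{2}$ together with $|U| \le 0.001\,n$ leaves ample slack in the union bound. The only minor technicality is keeping track of degenerate (repeated-coordinate) ordered tuples when converting edge-counts of $\overline{F}$ into ordered-tuple counts, but in each step this contributes only lower-order corrections.
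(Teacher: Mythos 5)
Your proposal is correct and takes essentially the same approach as the paper: both are elementary first-moment counting arguments exploiting the large density slack. Your version, a clean union bound over the five edge constraints on ordered tuples in $(V\setminus U)^6$, is if anything a tidier bookkeeping than the paper's pigeonhole comparison between the number of candidate pairs of non-rooted edges (generated by choices of rooted edges) and the number of pairs of potential non-edges.
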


\begin{proof}Let $V=V\left(F\right)$ and consider any $x,y,z\in U$. Every vertex
has degree at least $0.96\binom{n}{2}-0.001n^{2}$ into $V\backslash U$, so there are at least $\left(0.96\binom{n}{2}-0.001n^2\right)^3-o(n^6)$ choices of three disjoint edges each containing one of $x$,$y$ and $z$ and two vertices in $V\setminus U$. For each such choice of three edges, there is a pair of edges in $U$ whose presence would yield a suitable sub-absorber, and a pair of edges in $F\left[V\backslash U\right]$ can contribute in this way to at most $(3!)^2$ sub-absorbers. It follows that there are at least $\left(0.96\binom{n}{2}-0.001n^2\right)^3/(3!)^2-o(n^6)$ pairs of edges whose presence would yield a sub-absorber. But note that all but at most $(1-0.96^{2})\binom{n}{3}^{2}$ of the possible pairs of edges
are present in $F$, and $\left(0.96/2!-0.001\right)^3/(3!)^2>(1-0.96^2)(1/3!)^2$, so for large $n$, there must be a sub-absorber as desired.
\end{proof}
Now we can finally prove \cref{claim:technical-absorbers-in-subgraphs},
completing the proof of \cref{conj:random-good}.
\begin{proof}[Proof of \cref{claim:technical-absorbers-in-subgraphs}]
Consider all the triples of vertices intersecting $U_{x}\cup U_{y}\cup U_{z}$,
except those with one vertex in $U_{x}\cup U_{y}\cup U_{z}$ and two
vertices in $W_{A}$, or one vertex in $U_{x}\cup U_{y}\cup U_{z}$
and two vertices in $W_{B}$. None of these triples are edges
in ${\boldsymbol G}'$. Let ${\boldsymbol G}_{\text{extra}}$ be a random 3-graph, independent
from ${\boldsymbol G}'$, where each of these triples is included with probability
$p:=\alpha'\left(1+10\alpha'\right)/n$ independently. Now ${\boldsymbol G}'\cup{\boldsymbol G}_{\text{extra}}$
is a standard binomial random 3-graph on the vertex set $U_{x}\cup U_{y}\cup U_{z}\cup W_{A}\cup W_{B}$
where each edge is present with probability $p$. The only purpose
of the edges in ${\boldsymbol G}_{\text{extra}}$ is to put us in the
setting for \cref{thm:KLR}; we will not actually use these edges
for anything.

Let $H$ be a contracted absorber, as defined in \cref{lem:m3-contracted-absorber}.
By \cref{thm:KLR} there is $\varepsilon>0$ such that with probability
$1-e^{-\Omega\left(n^{2}\right)}$ the random 3-graph ${\boldsymbol G}'\cup{\boldsymbol G}_{\text{extra}}$
has the property that for any $n'=\Omega\left(n\right)$ and any $m\ge0.001p\left(n'\right)^{3}$,
every subgraph $G''\subseteq{\boldsymbol G}'\cup{\boldsymbol G}_{\text{extra}}$ in $\mathcal{G}\left(H,n',m,p,\varepsilon\right)$
has $\#_H(G')>0$. Also, by the Chernoff bound (basically
as in \cref{subsec:upper-uniformity}), ${\boldsymbol G}$ is $\left(p,\beta\right)$-upper-quasirandom
with probability $1-e^{-\Omega\left(n^{2}\right)}$.

Assuming that the above two properties hold, apply \cref{lem:sparse-regularity,lem:respect-partition}
to $G_{A}\cup G_{B}$ with regularity parameter $\varepsilon>0$ which
is small even compared to $\beta$, and with $t_{0}\ge1/\varepsilon$,
to obtain a partitioned $t$-vertex cluster 3-graph $\mathcal{R}$
with threshold $0.001$. Let $\mathcal{U}_{x},\mathcal{U}_{y},\mathcal{U}_{z},\mathcal{W}_{A},\mathcal{W}_{B}$
be the sets of clusters fully contained in $U_{x},U_{y},U_{z},W_{A},W_{B}$.
By \cref{lem:transfer-cluster-graph-partition}, in both $\mathcal{R}_{A}:=\mathcal{R}\left[\mathcal{W}_{A}\cup\mathcal{U}_{x}\cup\mathcal{U}_{y}\cup\mathcal{U}_{z}\right]$
and in in $\mathcal{R}_{B}:=\mathcal{R}\left[\mathcal{W}_{B}\cup\mathcal{U}_{x}\cup\mathcal{U}_{y}\cup\mathcal{U}_{z}\right]$,
all but $\sqrt{\varepsilon}t$ vertices have degree at least $0.97\binom{t/2}{2}$.
Delete at most $2\sqrt{\varepsilon}t$ vertices to obtain induced
subgraphs $\mathcal{R}_{A}',\mathcal{R}_{B}'$ with minimum degree
at least $0.96\binom{t/2}{2}$.

Now, fix clusters $V_{x}\in\mathcal{U}_{x},V_{y}\subseteq\mathcal{U}_{y},V_{z}\subseteq\mathcal{U}_{z}$
which appear in both $\mathcal{R}_{A}',\mathcal{R}_{B}'$, and apply
\cref{lem:sub-absorber-in-dense} to $\mathcal{R}_{A}'$ and to $\mathcal{R}_{B}'$
to find sub-absorbers in the cluster graph rooted at $V_{x},V_{y},V_{z}$.
These two sub-absorbers comprise a contracted absorber $\mathcal{A}$
in $\mathcal{R}$, so the edges between the clusters in $V\left(\mathcal{A}\right)$
give us a subgraph of ${\boldsymbol G}'$ in $\mathcal{G}\left(H,n',m,p,\varepsilon\right)$,
for $n'=n/t\ge n/T$ and $m\ge0.001p\left(n'\right)^{3}$. It follows
that we can find a canonical copy of a contracted absorber $H$, giving
us a sub-absorber in $G_{A}$ and in $G_{B}$ rooted on the same three
vertices.
\end{proof}

\section{\label{sec:concluding}Concluding remarks}

There are many interesting further directions of research regarding
random Steiner triple systems. Most obviously, \cref{conj:kirkman}
is still open, though we imagine that an exact result would be quite
difficult to prove. Perhaps a good starting point would be the methods
of K\"uhn and Osthus~\cite{KO13}, and Knox, K\"uhn and Osthus~\cite{KKO15}
for perfectly packing Hamilton cycles in random graphs and random
tournaments.

A second interesting direction is to study the discrepancy of random
Steiner triple systems. Combining the ideas in \cref{subsec:upper-uniformity}
and \cite[Section~5.1.2]{Kwa16}, we have essentially proved the following
theorem bounding the discrepancy of a random Steiner triple system,
which may be of independent interest.
\begin{thm}
\label{thm:discrepancy}Let ${\boldsymbol S}$ be a uniformly random order-$n$
Steiner triple system. Then ${\boldsymbol S}$ a.a.s.\ satisfies the following
property. For every triple of vertex subsets $X,Y,Z$, we have $e\left(X,Y,Z\right)=\left|X\right|\left|Y\right|\left|Z\right|/n+o\left(n^{2}\right)$.
\end{thm}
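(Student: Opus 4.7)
My plan is to combine the bite-decomposition argument of \cref{subsec:upper-uniformity} with a simple complementation trick for the lower bound. The upper bound is essentially the proof of upper-quasirandomness adapted to apply to the \emph{full} Steiner triple system (rather than to an $\alpha N$-edge prefix with a small constant $\alpha$) and to arbitrary, possibly overlapping, triples $X,Y,Z$.

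For the upper bound, equip ${\boldsymbol S}$ with a uniformly random ordering of its edges, fix a slowly growing $q=q(n)\to\infty$ (say $q=\log\log n$), and partition its edges into $q$ bites ${\boldsymbol S}^{(\ell)}:={\boldsymbol S}_{\ell N/q}\setminus{\boldsymbol S}_{(\ell-1)N/q}$ of size $N/q$ each; by symmetry each bite is marginally distributed as ${\boldsymbol S}_{N/q}$. For ${\boldsymbol G}\sim\operatorname{H}^3(n,p)$ with $p=(1/q)(1+10/q)/n$, the random variable $e_{\boldsymbol G}(X,Y,Z)$ is a $6$-Lipschitz function of the edge-indicator vector (even when $X,Y,Z$ overlap) and has expectation $p|X||Y||Z|+O(pn^2)$. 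Applying \cref{thm:bernstein-type} and union-bounding over the $2^{3n}$ triples $(X,Y,Z)$ shows that, for a suitable $\varepsilon=\varepsilon(n)=o(1/q)$, the monotone-decreasing property
\[
\mathcal{P}=\{G:e_G(X,Y,Z)\le p|X||Y||Z|+\varepsilon n^2\text{ for all }X,Y,Z\}
\]
holds for ${\boldsymbol G}$ with probability $1-e^{-\omega(n)}$. \cref{lem:bite-transfer-new} applied to ${\boldsymbol S}_{N/q}$ (with $S=\emptyset$) transfers this to ${\boldsymbol S}_{N/q}$, and a union bound over the $q$ bites shows that a.a.s.\ every ${\boldsymbol S}^{(\ell)}\in\mathcal{P}$. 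Summing over $\ell$ gives
\[
e_{\boldsymbol S}(X,Y,Z)=\sum_{\ell}e_{{\boldsymbol S}^{(\ell)}}(X,Y,Z)\le qp|X||Y||Z|+q\varepsilon n^2=\frac{|X||Y||Z|}{n}+o(n^2)
\]
uniformly over all $(X,Y,Z)$.

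For the matching lower bound, use linearity of $e$ in its first coordinate to write $e_{\boldsymbol S}(X,Y,Z)=e_{\boldsymbol S}([n],Y,Z)-e_{\boldsymbol S}([n]\setminus X,Y,Z)$. Since every pair of distinct vertices of ${\boldsymbol S}$ lies in a unique edge, we have the \emph{exact} identity $e_{\boldsymbol S}([n],Y,Z)=|Y||Z|-|Y\cap Z|$. Combining this with the already-established upper bound applied to $([n]\setminus X,Y,Z)$ yields $e_{\boldsymbol S}(X,Y,Z)\ge |X||Y||Z|/n-o(n^2)$, completing the proof.

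The main technical point is the careful balancing of $q$ and $\varepsilon$: we need $q\to\infty$ so that $qp=(1+10/q)/n$ tends to $1/n$ in the main term, $q\varepsilon=o(1)$ so the summed Bernstein deviations stay $o(n^2)$, and $\varepsilon$ not too small compared to $1/q$ so that the Bernstein exponent $\Omega(\varepsilon^2n^2/(1/q+\varepsilon))$ beats the $2^{3n}$ union-bound loss. The choice $q=\log\log n$, $\varepsilon=1/(q(\log n)^{1/4})$ satisfies all three requirements comfortably; the lower-order corrections $O(pn^2)=O(n/q)$ in $\E e_{\boldsymbol G}(X,Y,Z)$ arising from ordered triples with repeated coordinates are absorbed into the $o(n^2)$ error.
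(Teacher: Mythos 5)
Your proof is correct and follows essentially the route the paper itself indicates: for the upper bound you rerun the bite‑decomposition argument of \cref{subsec:upper-uniformity} applied to all of $\boldsymbol S$ rather than to $\boldsymbol S_{\alpha N}$, and for the lower bound you invoke complementation together with the exact identity $e_{\boldsymbol S}([n],Y,Z)=|Y||Z|-|Y\cap Z|$ (which is valid since $\boldsymbol S$ is a complete Steiner triple system); this neatly explains why only an upper-quasirandomness bound needs to be transferred from the random model. Two small cautions. First, you explicitly track arbitrary (possibly overlapping) $X,Y,Z$ with the $O(pn^{2})$ correction for repeated coordinates; this is needed because $[n]\setminus X$ is not disjoint from $Y,Z$, and it is a detail the paper glosses over (its version of property $\mathcal P$ in \cref{subsec:upper-uniformity} is stated only for disjoint triples). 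Second, your choice $q=q(n)\to\infty$ makes the bite parameter $\alpha=1/q$ tend to $0$, whereas \cref{lem:triangle-removal-transfer} and \cref{lem:bite-transfer-new} are stated for a \emph{fixed} $\alpha\in(0,1)$, so as written you are implicitly relying on uniformity of their hidden constants as $\alpha\to 0$. This is very likely fine, but the safer and shorter route is the one the paper takes: keep $q$ a sufficiently large constant, sum the per-bite bounds to get $e_{\boldsymbol S}(X,Y,Z)\le |X||Y||Z|/n + O(n^{2}/q)$, and then observe that since $q$ was an arbitrary constant the error is $o(n^{2})$; the same complementation then closes the lower bound.
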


We remark that an analogous theorem for Latin squares (with a stronger
error term) was proved by Kwan and Sudakov~\cite{KS16}. See also
the related conjectures in \cite{LL16}. It would be very interesting
if one could substantially improve the error term $o\left(n^{2}\right)$;
we imagine the correct order of magnitude is $O\left(\sqrt{\left|X\right|\left|Y\right|\left|Z\right|}\right)$,
but a proof of this would require substantial new ideas.

Next, another interesting direction concerns containment and enumeration
of subgraphs. Using \cref{thm:KLR} and the sparse regularity lemma
(\cref{lem:sparse-regularity}) in combination with \cref{lem:triangle-removal-transfer}
and \cref{lem:bite-transfer-new}, it is straightforward to prove the
following result.
\begin{thm}
Let $H$ be a $3$-graph with $m_{3}\left(H\right)<1$. Then there
is $\xi=\xi\left(H\right)>0$ such that a uniformly random {\bf STS}$(n)$ a.a.s.\ contains at least $\xi n^{v\left(H\right)-e\left(H\right)}$
copies of $H$.
\end{thm}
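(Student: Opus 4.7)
My plan is to show that for a sufficiently small constant $\alpha>0$, the first $\alpha N$ edges of a uniformly random ordered STS, denoted $\boldsymbol S_{\alpha N}$, a.a.s.\ already contain at least $\xi n^{v(H)-e(H)}$ copies of $H$. Since a uniformly random STS $\boldsymbol S$ can be coupled to contain $\boldsymbol S_{\alpha N}$, and ``having at least $\xi n^{v(H)-e(H)}$ copies of $H$'' is monotone increasing in the edge set, this will imply the theorem. To study $\boldsymbol S_{\alpha N}$ I apply \cref{lem:triangle-removal-transfer} with $\mathcal P$ equal to this property and trivial $\mathcal Q={\mathcal O}_{\alpha N}$, reducing the task to showing that $\boldsymbol S'\sim\operatorname{R}(n,\alpha N)$ satisfies $\mathcal P$ with probability at least $1-\exp(-n^{2-b})$.

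The key trick is to phrase a suitable \emph{monotone decreasing} event fit for \cref{lem:bite-transfer-new}. Fix $d>0$ and let $\varepsilon,\xi'>0$ be the constants supplied by \cref{thm:KLR}, together with $\eta>0$ and the associated $C$. Define $\mathcal P^*$ to be the event that \emph{for every $G'\in\mathcal G(H,n',m,q,\varepsilon)$ with $n'\ge\eta n$ and $m\ge dq(n')^3$, one has $\#_H(G')\ge\xi'(m/(n')^3)^{e(H)}(n')^{v(H)}$}. This is monotone decreasing, because any blow-up subgraph of $G_0\subseteq G$ is automatically a blow-up subgraph of $G$. Now $m_3(H)<1$ forces $H$ to be linear (any two edges sharing two vertices would give a subgraph with $(e'-1)/(v'-3)\ge1$), and $1/m_3(H)>1$ gives $q=\alpha(1+10\alpha)/n\ge Cn^{-1/m_3(H)}$ for all large $n$, so \cref{thm:KLR} applies and yields that $\boldsymbol G\sim\operatorname{H}^3(n,q)$ satisfies $\mathcal P^*$ with probability $1-e^{-\Omega(n^2)}$. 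Applying \cref{lem:bite-transfer-new}, $\boldsymbol S'$ inherits $\mathcal P^*$ with the same probability.

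To convert $\mathcal P^*$ into a lower bound on $\#_H(\boldsymbol S')$, I need to exhibit at least one blow-up subgraph inside $\boldsymbol S'$. A direct bite-transfer argument analogous to that of \cref{subsec:upper-uniformity} shows that $\boldsymbol S'$ is $(o(1),q,1+o(1))$-upper-uniform with probability $1-e^{-\Omega(n^2)}$, so \cref{lem:sparse-regularity} yields an $(\varepsilon,q)$-regular equipartition of $V(\boldsymbol S')$ into $t\ge t_0$ parts. Since $\boldsymbol S'$ has $\alpha N=\Theta(n^2)$ edges, the cluster 3-graph has density bounded away from zero, so a simple counting argument produces a copy of $H$ in it. The associated blow-up subgraph of $\boldsymbol S'$ lies in $\mathcal G(H,n/t,m,q,\varepsilon)$ with $m=\Theta(q(n/t)^3)$, and plugging into $\mathcal P^*$ gives $\#_H(\boldsymbol S')\ge\xi n^{v(H)-e(H)}$.

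The main obstacle is the careful design of $\mathcal P^*$: it must be genuinely monotone decreasing (so \cref{lem:bite-transfer-new} applies), it must fit KLR and hold with failure probability $e^{-\Omega(n^2)}$ (so that it survives the weaker error term in \cref{lem:triangle-removal-transfer}), and it must be combinable with sparse regularity to actually yield copies of $H$ in $\boldsymbol S'$ itself. Beyond this packaging, the proof is a direct assembly of the already-stated tools.
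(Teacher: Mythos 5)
Your plan matches what the paper has in mind: the paper only remarks that the result follows from \cref{thm:KLR}, \cref{lem:sparse-regularity}, \cref{lem:triangle-removal-transfer} and \cref{lem:bite-transfer-new}, and your write-up assembles exactly those tools, with the nice and necessary observation that the K\L R conclusion (``every regular blow-up subgraph of the ambient graph has many canonical copies'') is monotone decreasing and so compatible with \cref{lem:bite-transfer-new}.

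One step is stated too loosely and would be false as written. You claim that because the cluster $3$-graph of $\boldsymbol S'$ has density bounded away from zero, a simple counting argument produces a copy of $H$ in it. That implication fails in general: a $3$-graph $H$ with $m_3(H)<1$ need not be $3$-partite, and non-$3$-partite $3$-graphs have positive Tur\'an density, so there are $3$-graphs of constant positive density avoiding them. What actually saves you is that upper-uniformity together with the edge count forces the cluster $3$-graph density to be close to $1$, not merely positive. Concretely, $\boldsymbol S'$ has $\alpha N \approx \alpha n^2/6$ edges, and by $(o(1),q,1+o(1))$-upper-uniformity each cluster triple carries at most $(1+o(1))\,q(n/t)^3$ edges; summing over cluster-graph edges, low-density triples, irregular triples and within-cluster pairs shows the cluster graph has density at least $1/(1+10\alpha) - O(\tau+\varepsilon + 1/(\alpha t_0)) - o(1)$, which can be pushed above $1 - 1/(2e(H))$ by taking $\alpha,\tau,\varepsilon$ small and $t_0$ large. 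A $3$-graph on $t$ vertices of density $>1-1/e(H)$ clearly contains a copy of any fixed $H$ on at most $t$ vertices, so the rest of your argument goes through. (You should also state $\mathcal P^*$ with the constraint $G'\subseteq\boldsymbol G$ — as you note in the monotonicity justification, that is clearly the intent — and use a fixed small threshold $\tau=d$ in the cluster graph so the K\L R density hypothesis $m\ge dq(n')^3$ is automatic.)
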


We imagine that actually the number of (labelled) copies of $H$ should
a.a.s.\ be $\left(1\pm o\left(1\right)\right)n^{v\left(H\right)-e\left(H\right)}$,
but it is less obvious how to prove this. In particular, due to the
``infamous upper tail'' issue (see \cite{JR02}) and the fact that
\cref{lem:triangle-removal-transfer} only works with properties that
hold with probability extremely close to 1, any kind of upper bound
on subgraph counts would require new ideas. We also remark that a
lower bound was proved by Simkin~\cite{Sim18} in the special case
where $H$ is a \emph{Pasch configuration}, using ideas from \cite{Kwa16}.

Another interesting question (also mentioned in \cite{Kwa16}) is
whether a random Steiner triple system typically contains a Steiner
triple subsystem on fewer vertices. McKay and Wanless~\cite{MW99}
proved that almost all Latin squares have many small Latin subsquares
(see also \cite{KS16}), but it was conjectured by Quackenbush~\cite{Qua80}
that most Steiner triple systems do not have proper subsystems. By
comparison with a binomial random 3-graph, it seems likely that this
conjecture is actually false, but it seems that substantial new ideas
would be required to prove or disprove it.

Finally, one might try to generalise from Steiner triple systems to
other classes of designs. It seems that the arguments in this paper
should generalise in a straightforward fashion to Latin squares, proving
that a random order-$n$ Latin squares a.a.s.\ has $n-o\left(n\right)$
disjoint transversals (see \cite{Kwa16} for a definition of Latin
squares, a discussion of how the methods in \cref{sec:random-STS}
generalise to random Latin squares, and a discussion of the significance
of transversals in Latin squares). It was actually conjectured by
van Rees~\cite{Ree90} that a random order-$n$ Latin square typically does
\emph{not} have a decomposition into $n$ disjoint transversals, though
Wanless and Webb~\cite{WW06} observed that numerical observations
seem more in line with the Latin squares analogue of \cref{conj:kirkman}.

Also, a \emph{$\left(q,r,\lambda\right)$-design} ($q>r$) of order
$n$ is a $q$-uniform hypergraph on the vertex set $\left[n\right]$ such
that every $r$-set of vertices is included in exactly $\lambda$
edges. A $\left(q,r\right)$\emph{-Steiner system} is a $\left(q,r,1\right)$-design
(so, a Steiner triple system is a $\left(3,2,1\right)$-design or
equivalently a $\left(3,2\right)$-Steiner system, and a $d$-regular
graph is a $\left(2,1,d\right)$-design). We expect that it should
be fairly routine to adapt the definition of an absorber in the obvious
way to prove that almost all $\left(q,r,\lambda\right)$-designs
have a decomposition of almost all their edges into perfect matchings.
As for Steiner triple systems, a design is said to be resolvable if it admits
a decomposition into perfect matchings, and the general problem of
whether resolvable block designs exist was only very recently solved
by Keevash~\cite{Kee18}.

However, note that a $3$-uniform perfect matching is actually a $\left(3,1\right)$-Steiner
system, so as a sweeping generalisation of \cref{conj:kirkman} we
might ask for which $r'\le r$ and $\lambda'\le\lambda$ do $\left(q,r,\lambda\right)$-designs
typically admit a decomposition into spanning $\left(q,r',\lambda'\right)$-designs
of the same order. We note that in the case of regular graphs a much
stronger phenomenon occurs: there is a sense in which a random $\left(d_{1}+d_{2}\right)$-regular
graph is ``asymptotically the same'' as a random $d_{1}$-regular
graph combined with a random $d_{2}$-regular graph (provided $d_1+d_2>2$; see \cite[Section~9.5]{JLR00}).

{\bf Acknowledgements.} We would like to thank the anonymous referee for their careful reading of the manuscript and their valuable comments.



\begin{appendices}
\crefalias{section}{appsec}

\section{Adapting the proof of the K\L R Conjecture to linear hypergraphs\label{sec:KLR-proof}}

In their paper~\cite{CGSS14} on the K\L R conjecture for graphs,
Conlon, Gowers, Samotij and Schacht explictly mentioned that their
methods should extend to hypergraphs, and that the generalisation
should be particularly simple in the case of linear hypergraphs. Actually,
the generalisation is so simple that we can describe in this short
appendix exactly what changes to make to their proof of their Theorem~1.6~(i)
(appearing in Section~2 of their paper), to turn it into a proof
of \cref{thm:KLR}. All theorem/lemma/section references are with respect
to \cite{CGSS14}, unless noted otherwise.
\begin{itemize}
\item Change the notation ``$G(H)$'' to ``$\#_H(G)$'' (we changed this notation to avoid confusion with other notation in the paper).
\item Change every instance of ``graph'' to ``$r$-graph'' and every
instance of ``bipartite graph'' to ``$r$-partite $r$-graph''.
\item Change every instance of ``$m_{2}$'' to ``$m_{r}$''.
\item Change every instance of ``$n^{2}$'' to ``$n^{r}$'', every instance
of ``$n^{v\left(H\right)-2}$'' to ``$n^{v\left(H\right)-r}$'',
and every instance of ``$N^{2}$'' to ``$N^{r}$''.
\item The definition of $\left(\varepsilon,d\right)$-lower-regularity preceding
the statement of Theorem~2.1 should be changed in the obvious way:
an $r$-partite $r$-graph between sets $V_{1},\dots,V_{r}$ is $\left(\varepsilon,d\right)$-lower-regular
if, for every $V_{1}'\subseteq V_{1},\dots,V_{r}'\subseteq V_{r}$
with $\left|V_{i}'\right|\ge\varepsilon\left|V_{i}\right|$, the density
$d\left(V_{1}',\dots,V_{r}'\right)$ of edges between $V_{1}',\dots,V_{r}'$
satisfies $d\left(V_{1}',\dots,V_{r}'\right)\ge d$.
\item In the statement of Theorem~2.1, change ``let $H$ be an arbitrary
graph'' to ``let $H$ be an arbitrary linear $r$-graph''.
\item At the beginning of the deduction of Theorem~1.6~(i) from Theorem~2.1,
change the observation ``$m_{2}\left(H\right)\ge1$'' to ``$m_{r}\left(H\right)\ge1/\left(r-1\right)$''.
\item In the second displayed equation in the proof of Theorem~1.6~(i),
change ``$e_{G_{N,p}}\left(W_{i},W_{j}\right)\ge2pn^{2}$ for some
$ij\in E\left(H\right)$'' to ``$e_{G_{N,p}}\left(W_{i_{1}},\dots,W_{i_{r}}\right)\ge2pn^{r}$
for some $\left\{ i_{1},\dots,i_{r}\right\} \in E\left(H\right)$''.
\item A one-sided counting lemma is presented without proof as Lemma ~2.4.
The corresponding generalisation for linear hypergraphs (with ``every
graph $H$'' replaced with ``every linear $r$-graph $H$'') follows
from essentially the same proof as \cite[Lemma~10]{KNRS10} (see also
\cite[Lemma~22]{CHPS12}).
\item In Section~2.2, change the definition of $C$ from ``$32RL^{R}C'/\left(\varepsilon^{2}d\right)$''
to $\text{\textquotedblleft}16rRL^{R}C'/\left(\varepsilon^{2}d\right)\text{\textquotedblright}$,
and in the proof of Claim~2.5 change the last display from ``$e\left(H\right)\cdot2^{2n}\exp\left(-\varepsilon^{2}dp_{s}n^{2}/16\right)$''
to ``$e\left(H\right)\cdot2^{rn}\exp\left(-\varepsilon^{2}dp_{s}n^{r}/16\right)$''.
\item In Section~2.2.2, the definition of $Z_{s}$ should be generalised
in the obvious way:
\[
Z_{s}=\left\{ e\in G\left(V_{i_{r}},\dots,V_{i_{r}}\right):\deg_{H''}\left(e,G,G_{s}'\right)\ge\frac{\xi'}{2}p_{s}^{e\left(H''\right)}n^{v\left(H\right)-r}\right\} ,
\]
where $\left\{ i_{1},\dots,i_{r}\right\} $ is the edge of $H'$ that
is missing in $H''$, and $V_{i_{1}},\dots,V_{i_{r}}$ are the subsets
of $V\left(G\right)$ corresponding to the vertices $i_{1},\dots,i_{r}$.
\item In Case~1 of the proof of Claim~2.6, change ``there exist sets
$X_{i}\subseteq V_{i}$ and $X_{j}\subseteq V_{j}$ with $\left|X_{i}\right|,\left|X_{j}\right|\ge\varepsilon n$''
to ``there exist sets $X_{i_{1}}\subseteq V_{i_{1}},\dots X_{i_{r}}\subseteq V_{i_{r}}$
with $\left|X_{i_{1}}\right|,\dots,\left|X_{i_{r}}\right|\ge\varepsilon n$''.
Also, change all instances of ``$\left(X_{i},X_{j}\right)$'' to
``$\left(X_{i_{1}},\dots,X_{i_{r}}\right)$'' and change all instances
of ``$\left|X_{i}\right|\left|X_{j}\right|$ to $\left|X_{i_{1}}\right|\dots\left|X_{i_{r}}\right|$''.
\item In Case~2 of the proof of Claim~2.6, change ``for every $i',j'\in E\left(H\right)$
and every pair of sets $W_{i'}\subseteq V_{i'}$ and $W_{j'}\subseteq V_{j'}$\ensuremath{\in}
with $\left|W_{i'}\right|\ge\varepsilon n$ and $\left|W_{j'}\right|\ge\varepsilon n$''
to ``for every $i_{1}',\dots,i_{r}'\in E\left(H\right)$ and any
choice of $W_{i_{1}'}\subseteq V_{i_{1}'},\dots,W_{i_{r}'}\subseteq V_{i_{r}'}$
with $\left|W_{i_{1}'}\right|,\dots,\left|W_{i_{r}'}\right|\ge\varepsilon n$''.
Also, change all instances of ``$\left(W_{i'},W_{j'}\right)$''
to ``$\left(W_{i_{1}'},\dots,W_{i_{r}'}\right)$'', all instances
of ``$\left|W_{i'}\right|\left|W_{j'}\right|$'' to ``$\left|W_{i_{1}'}\right|\dots\left|W_{i_{r}'}\right|$'',
and all instances of ``$\left(V_{i},V_{j}\right)$'' to ``$\left(V_{i_{1}},\dots,V_{i_{r}}\right)$''.
\end{itemize}
\end{appendices}
\end{document}